\documentclass{amsart}
\pdfoutput=1 




\usepackage{amsmath,amsthm,amssymb}
\usepackage{lipsum}
\usepackage{amsfonts}
\usepackage{graphicx}
\usepackage{epstopdf}
\usepackage{algorithmic}
\ifpdf
  \DeclareGraphicsExtensions{.eps,.pdf,.png,.jpg}
\else
  \DeclareGraphicsExtensions{.eps}
\fi

\usepackage[a4paper,margin=3cm]{geometry}%
\usepackage{lmodern,dsfont,soulutf8,mathrsfs}%
\usepackage[showonlyrefs=true]{mathtools}%
\usepackage[utf8]{inputenc}%
\usepackage[T1]{fontenc}%
\usepackage{listings}%
\usepackage{hyperref}




\newtheorem{theorem}{Theorem}[section]%
\newtheorem{corollary}[theorem]{Corollary}%
\newtheorem{proposition}[theorem]{Proposition}%
\newtheorem{lemma}[theorem]{Lemma}%
\newtheorem{remark}[theorem]{Remark}%
\newtheorem{assumption}[theorem]{Assumption}%
\newtheorem{algo}[theorem]{Algo}%

\title[Coulomb gases under constraints -- Chafaï, Ferré, Stoltz]%
{Coulomb gases under constraint:\\some theoretical and numerical results}

\thanks{Accepted in SIAM Journal on Mathematical Analysis (SIMA), September 30, 2020. The PhD of Grégoire
      Ferré is supported by the Labex Bézout ANR-10-LABX-58-01. The work of
      Gabriel Stoltz was funded in part by the Agence Nationale de la
      Recherche, under grant ANR-14-CE23-0012 (COSMOS). Gabriel Stoltz is
      supported by the European Research Council under the European Union’s
      Seventh Framework Programme (FP/2007-2013)/ERC Grant Agreement number
      614492.}

\author{Djalil Chafaï}
\address{Universit\'e Paris-Dauphine, PSL, CNRS, CEREMADE, F-75016 Paris, France}
\email{djalil(at)chafai.net}
\urladdr{http://djalil.chafai.net/}
\author{Grégoire Ferré}
\address{CERMICS, École des Ponts, Marne-la-Vallée, France and Matherials, Inria Paris, France}
\email{gregoire.ferre(at)ponts.org}
\urladdr{https://sites.google.com/view/gferre/}
\author{Gabriel Stoltz}
\address{CERMICS, Ecole des Ponts, Marne-la-Vallée, France and Matherials, Inria Paris, France}
\email{gabriel.stoltz(at)enpc.fr}
\urladdr{https://cermics.enpc.fr/\~stoltz}

\usepackage{amsopn}



\newcommand{\dP}{\mathbb{P}}
\newcommand{\dQ}{\mathbb{Q}}\newcommand{\dR}{\mathbb{R}}



\newcommand{\cE}{\mathcal{E}}

\newcommand{\cM}{\mathcal{M}}
\newcommand{\cP}{\mathcal{P}}

\newcommand{\Pc}{\mathcal{P}_{\mathrm{c}}}

\newcommand{\bE}{\mathbf{E}}


\newcommand{\Dt}{\Delta t}
\newcommand{\Nit}{N_{\mathrm{iter}}}

 %
\newcommand{\CEIL}[1]{{{\lceil#1\rceil}}} %



\newcommand{\mus}{\mu_{\star}}
\newcommand{\Km}{K_{\mathrm{max}}}

\newcommand{\ind}{\mathds{1}}

\newcommand{\DBL}{\mathrm{d}_{\mathrm{BL}}}
\newcommand{\DW}{\mathrm{d}_{\mathrm{W}_1}}
\newcommand{\DWP}{\mathrm{d}_{\mathrm{W}_p}}





\keywords{Coulomb gases; random matrices; large deviations; conditioning; Gibbs principle; numerical simulation; constrained dynamics}

\subjclass{60F10; 82C22; 65C05; 60G57}

\begin{document}

\begin{abstract}
  We consider Coulomb gas models for which the empirical measure typically
  concentrates, when the number of particles becomes large, on an equilibrium
  measure minimizing an electrostatic energy. We study the behaviour when the
  gas is conditioned on a rare event. We first show that the special case of
  quadratic confinement and linear constraint is exactly solvable due to a
  remarkable factorization, and that the conditioning has then the simple
  effect of shifting the cloud of particles without deformation. To address
  more general cases, we perform a theoretical asymptotic analysis relying on
  a large deviations technique known as the Gibbs conditioning principle. The
  technical part amounts to establishing that the conditioning ensemble is an
  $I$-continuity set of the energy. This leads to characterizing the
  conditioned equilibrium measure as the solution of a modified variational
  problem. For simplicity, we focus on linear statistics and on quadratic
  statistics constraints. Finally, we numerically illustrate our predictions
  and explore cases in which no explicit solution is known. For this, we use a
  Generalized Hybrid Monte Carlo algorithm for sampling from the conditioned
  distribution for a finite but large system.
\end{abstract}

\maketitle

{\footnotesize\tableofcontents}

\section{Introduction}
\label{sec:intro}
This section contains the main elements of the considered model, some
motivations and the plan of the paper. We consider here the so-called Coulomb
gas model that, in addition to its physical interest, shows an interesting
behaviour in the limit of a large number of particles, see for
instance~\cite{MR3262506,serfaty-icm2018,leble2017large}. The model consists
of a set of random particles $X_{n,1},\ldots,X_{n,n}$ for $n\geq 2$, where
each~$X_{n,i}$ belongs to~$\dR^d$ for some physical dimension $d\geq 2$. The
particles interact through the Coulomb kernel $g:\dR^d\to \dR$ defined by
\[
  g(x) = \left\{
    \begin{aligned}
      &  \log \frac{1}{|x|}, & \mathrm{if}\quad d=2,
      \\
      &   \frac{1}{(d-2)|x|^{d-2}}, & \mathrm{if}\quad d\geq 3.
    \end{aligned}
  \right.
\]
This denomination comes from the equation satisfied by the interaction~$g$.
Indeed, denoting by~$\delta_0$ the Dirac mass at~$0$, $g$ solves in the sense
of distributions the following Poisson problem:
\begin{equation}
  \label{eq:poisson}
  -\Delta g = c_d\delta_0, %
  \quad \mathrm{with}\quad %
  c_d = \mathrm{surface}\big(\{x\in\mathbb{R}^d:|x|=1\}\big) %
  = 2\frac{\pi^{d/2}}{\Gamma(d/2)}.
\end{equation}
Note that $\lim_{|x|\to+\infty}g(x)=0$ if $d\geq3$, while
$\lim_{|x|\to+\infty}g(x)=-\infty$ if $d=2$. In \eqref{eq:poisson}
$\Delta=\sum_{i=1}^n\partial^2_i$ denotes the Laplacian operator in~$\dR^d$.
In addition to this pair interaction, the particles are subject to a confining
potential $V:\dR^d\to\dR$ assumed to be lower semi-continuous and such that
\begin{equation}
  \label{eq:confinement}
  \varliminf_{|x|\to+\infty} \big(V(x)-2\ind_{d=2}\log|x|\big)>-\infty.
\end{equation}
Following~\cite{MR3820329} or~\cite{MR3309890}, under this assumption, we can
define the electrostatic energy on~$\mathcal{P}(\mathbb{R}^d)$ by
\begin{equation}\label{eq:cE}
  \mathcal{E}(\mu)
  =\iint_{\dR^d\times \dR^d}\left(g(x-y)+\frac{V(x)+V(y)}{2}\right)\mu(\mathrm{d}x)\mu(\mathrm{d}y).
\end{equation}
This makes sense in $\mathbb{R}\cup\{+\infty\}$ since the integrand is bounded
from below thanks to the assumption~\eqref{eq:confinement} on~$V$. Moreover
for all $\mu\in\mathcal{P}(\mathbb{R}^d)$ such that
\begin{equation}
  \label{eq:log_int_condition}
  \int\log(1+|x|)\mathbf{1}_{d=2} \, \mu(\mathrm{d}x)<\infty,
\end{equation}
we have
\begin{equation}\label{eq:cEbis}
  \mathcal{E}(\mu)=\iint_{\dR^d\times \dR^d} g(x-y)\mu(\mathrm{d}x)\mu(\mathrm{d}y)
  +\int_{\dR^d} V(x)\mu(\mathrm{d}x);
\end{equation}
see~\eqref{eq:def_J} below. The functional~$\mathcal{E}$ has a unique minimizer
on~$\mathcal{P}(\mathbb{R}^d)$ called the \emph{equilibrium
  measure}~\cite{MR3820329,MR3309890}:
\begin{equation}\label{eq:mus}
  \mus=\underset{\mathcal{P}(\mathbb{R}^d)}{\mathrm{argmin}}\ \mathcal{E}.
\end{equation}
It has compact support, and if moreover~$V$ has a Lipschitz continuous
derivative then it has density
\begin{equation}\label{eq:muV}
  \frac{\Delta V}{2c_d}
\end{equation}
on the interior of its support. In particular if~$V$ is proportional
to~$\left|\cdot\right|^2$ then~$\mus$ is uniform on a ball. The compactness of
the support of~$\mus$ comes from the \emph{strong confinement}
assumption~\eqref{eq:confinement}. Note that it is possible to consider
\emph{weakly confining potentials} for which the equilibrium measure still
exists but is no longer compactly supported, see for instance the spherical
ensemble in~\cite{hardy2012note,MR3215627}.

Let $X_n=(X_{n,1},\ldots,X_{n,n})$ be a random vector of~$(\mathbb{R}^d)^n$
with law
\begin{equation}\label{eq:Pn}
  P_n(\mathrm{d} x)=
  \frac{\mathrm{e}^{-\beta_n H_n(x_1,\ldots,x_n)}}{Z_n}\mathrm{d} x_1\cdots\mathrm{d} x_n,
\end{equation}
where $\beta_n>0$ satisfies
\begin{equation}
  \label{eq:betandiverge}
  \lim_{n\to\infty}\frac{\beta_n}{n}=+\infty,
\end{equation}
and
\begin{equation}\label{eq:Hn}
  H_n(x_1,\ldots,x_n) = \frac{1}{n}\sum_{i=1}^nV(x_i)+\frac{1}{n^2}\sum_{i\neq
  j}g(x_i-x_j).
\end{equation}
This makes sense only if 
\begin{equation}\label{eq:Zn}
  Z_n = \int_{(\mathbb{R}^d)^n}\mathrm{e}^{-\beta_n H_n(x_1,\ldots,x_n)}
  \mathrm{d} x_1\cdots\mathrm{d} x_n<\infty,
\end{equation}
which is the case when~$V$ satisfies
\begin{equation}\label{eq:Vcond}
  \int_{\mathbb{R}^d}\mathrm{e}^{-\frac{\beta_n}{n}(V(x)-2\ind_{d=2}\log(1+|x|))}\mathrm{d}x<\infty.
\end{equation}

\begin{remark}
The condition~\eqref{eq:betandiverge} ensures a large deviation principle with
a simple rate function. It is possible to consider the Sanov regime
$\beta_n=\beta n$ for which the rate function has an additional classical
entropy term, see for instance~\cite{MR3262506,david-ihp,MR3825945,dupuis,lambert2019poisson}.
\end{remark}

This model is standard in mathematical physics:~$P_n$ is a Boltzmann--Gibbs
measure modelling a gas of particles, called here a \emph{Coulomb gas}, at
inverse temperature~$\beta_n$ and with Hamiltonian~$H_n$. The law~$P_n$ is
exchangeable in the sense that~$H_n$ is symmetric in $x_1,\ldots,x_n$. Indeed,
it depends on $x_1,\ldots,x_n$ only via the empirical measure, namely,~$P_n$
almost surely,
\begin{equation}
  \label{eq:Hnmun}
  H_n=\int_{\dR^d} V(x)\mu_n(\mathrm{d}x) %
  +\iint_{\neq}g(x-y)\mu_n(\mathrm{d}x)\mu_n(\mathrm{d}y)  
  \quad\text{with}\quad
  \mu_n =\frac{1}{n}\sum_{i=1}^n\delta_{x_i},
\end{equation}
where the double integration runs
over~$\{(x,y)\in\dR^d\times\dR^d\,|\, x\neq y\}$. A heuristic reasoning
suggests that, if $\beta_n\to+\infty$ fast enough, under~$P_n$ the empirical
measure~$\mu_n$ should concentrate in the limit $n\to+\infty$ on the
equilibrium measure~$\mus$ that minimizes the energy~$\mathcal{E}$
in~\eqref{eq:cEbis}-\eqref{eq:mus}. This is intuited from the Laplace
principle given the expression~\eqref{eq:Pn} for~$P_n$, where~$H_n$ is defined
by~\eqref{eq:Hnmun}. This intuition can be made rigorous through a large
deviations principle (LDP), which can be established in this case and many
others, see for instance~\cite{arous1997large,MR3262506,dupuis,david-ihp} and
the references therein. In particular, the case~$d=2$ with quadratic
confinement~$V$ corresponds to the well-known Ginibre ensemble for random
matrices~\cite{MR2641363,MR3699468}. We could also consider more general
interactions, such as Riesz kernels~\cite{MR3262506,leble2017large},
discontinuous~\cite{dupuis} or weak~\cite{hardy2012note} confinement, but we
stick to this setting for ease of presentation. The technical requirements
needed for extending our proofs will be pointed out throughout the paper, and
cases not covered by the theoretical analysis will be investigated numerically
in Section~\ref{sec:numerics}.

As large deviations are concerned with probabilities of rare fluctuations, it
is possible to consider the empirical measure of the random gas conditioned on
such a fluctuation. There has been a number of works on the behaviour of such
gases conditioned on having an unusual proportion of the particles lying in
some region of the space. As an example, for~$d=2$ and~$V$
quadratic,~\cite{MR3264829} reformulates the conditioned equilibrium measure
through an obstacle problem. On the other hand~\cite{ghosh-nishry,MR3825948}
consider the rare situation in which there is a ``hole'' in the distribution,
in other words no particle around zero.
Finally~\cite{PhysRevLett.103.220603,PhysRevE.83.041105} consider the one
dimensional Wigner situation in which an abnormal proportion of particles lie
on one side of the real line. Explicit expressions can be obtained in the
latter case. The study of such conditionings is motivated by questions arising
in theoretical physics, see for instance the references
in~\cite{PhysRevE.83.041105}.

While the above mentioned works bring substantial contributions to the
understanding of conditioned random gas distributions, they also motivate
further questions. Indeed, one may consider more general constraints, like
conditioning on the barycenter of the cloud being far away from the origin.
This may be of interest for both theoretical~\cite{MR3264829} and practical
purposes (if one wants to filter out noise conditioned on some rare
event~\cite{bouchaud2009financial}). Moreover, the numerical methods proposed
in~\cite{ghosh-nishry,MR3825948,PhysRevE.83.041105} do not seem adapted to
sampling the empirical distribution conditioned on some event -- since this
event is typically rare, direct rejection sampling is generally inefficient.
The goal of this paper is therefore to investigate some theoretical results on
such conditioned Coulomb gases, as well as providing an algorithm to
sample conditioned distributions.

Mathematically, our aim is to consider the particles
$Y_n=(Y_{n,1},\ldots,Y_{n,n})$ in~$(\mathbb{R}^d)^n$ such that
\begin{equation}\label{eq:lawYnineq}
  Y_n\sim\mathrm{Law}\big( X_n \,\big|\, \xi_n(X_n) \leq 0 \big),
\end{equation}
where $\xi_n:(\dR^d)^n\to \dR$, and to consider the limiting behaviour of the
empirical measure
\[
\frac{1}{n}\sum_{i=1}^n \delta_{Y_{n,i}},
\]
as $n\to +\infty$, depending on the confinement
potential~$V$ and the constraint~$\xi_n$. Instead of an inequality constraint
like~\eqref{eq:lawYnineq}, we may instead consider an equality constraint
\[
  Y_n\sim\mathrm{Law}\big( X_n \,\big|\, \xi_n(X_n) = 0 \big).
\]
We will generally consider inequality constraints since they naturally lead to
a Gibbs conditioning principle. Equality constraints could be considered as
well by an additional limiting procedure, see~\cite[Section~7.3]{MR2571413}
and the discussion in Section~\ref{sec:LDP}. We could also consider~$\xi_n$ to
be $\dR^m$-valued for some $m\geq 2$ but we restrict to one dimensional
constraints for ease of exposition. The cases studied
in~\cite{MR3264829,ghosh-nishry,MR3825948,PhysRevLett.103.220603,PhysRevE.83.041105}
correspond to the choice
\[
\xi_n(x_1,\hdots, x_n) = \mu_n(\ind_U) - c,
\]
for some measurable set $U\subset \dR^d$ and constant $c\in\dR$. We will study
in this paper more general \emph{linear statistics} of the form
\begin{equation}
  \label{eq:xinlinintro}
\xi_n(x_1,\hdots, x_n) = \mu_n(\varphi),
\end{equation}
for some constraint function $\varphi:\dR^d \to \dR$ satisfying growth
conditions, see Section~\ref{sec:linear}. A particular case of interest is
when the constraint function~$\varphi$ is itself linear, namely:
\begin{equation}\label{eq:linphi}
  \varphi(x) = x\cdot v - c,
\end{equation}
for $v\in\dR^d$ and $c\in\dR$. Indeed, when~$\varphi$ is chosen according
to~\eqref{eq:linphi} and~$V$ is \emph{quadratic}, the equilibrium measure
under conditioning is the unconditioned one translated in the direction
of~$v$. We provide a simple proof of this result in
Section~\ref{sec:quadratic}. We next turn to more general constraints in
Section~\ref{sec:LDP}, proving first an abstract Gibbs conditioning principle
in Section~\ref{sec:Gibbs}. When considering linear statistics, we prove in
Section~\ref{sec:linear} that conditioning~$P_n$ boils down to modifying the
confinement potential~$V$. In Section~\ref{sec:quadstats} we consider the case
of quadratic statistics, which amounts to modifying the interaction
kernel~$g$.

In order to validate our theoretical results and explore cases in which
explicit solutions are not available, we also propose a method for sampling
the law of~$Y_n$ for a fixed~$n$. Actually, sampling probability distributions
under constraint is a long standing problem in molecular dynamics and
computational statistics. Concerning molecular simulation, one can be
interested in fixing some degrees of freedom of a system like bond lengths, or
the value of a so-called reaction coordinate, typically for free energy
computations -- we refer \textit{e.g.}
to~\cite{darve2007thermodynamic,MR2681239} for more details.
An example of application in computational statistics is for instance
Approximate Bayesian Computations,
see~\cite{tavare1997inferring,marin2012approximate}. Based on the Hamiltonian
Monte Carlo (HMC) method used in~\cite{Chafai2018} for sampling Gibbs measures
associated to Coulomb and Log-gases, we describe and implement the generalized
Hamiltonian Monte Carlo algorithm proposed in~\cite{lelievre2018hybrid} for
sampling probability measures on submanifolds. The method is detailed in
Section~\ref{sec:HMC}, 
and the results presented in Section~\ref{sec:applications} are in agreement
with the theory developed in Section~\ref{sec:LDP}.

\subsection*{Notation}

We introduce some notation used throughout the paper. For all $d\geq1$ we
denote by $\left|x\right|=(x_1^2+\cdots+x_d^2)^{1/2}$ the Euclidean norm and
by $x\cdot y=x_1y_1+\cdots+x_d y_d$ the scalar product on~$\dR^d$. We denote
by~$\mathcal{P}(\dR^d)$ the set of probability measures on~$\dR^d$ and, for
all $p\geq1$, by $\mathcal{P}_p(\mathbb{R}^d)$ those probability measures
having finite $p$-moments in the sense that~$\left|\cdot\right|^p$ is
integrable. For any measure $\mu\in \mathcal{P}(\dR^d)$, the support of~$\mu$
is defined as $\mathrm{supp}(\mu)=\dR^d\setminus A$, where~$A$ is the largest
open set such that $\mu(A)=0$ (which may be empty). For all measurable
$f:\mathbb{R}^d\to\mathbb{R}$, we define
\[
  \|f\|_\infty=\mathop{\textrm{sup}}_{x\in\mathbb{R}^d}|f(x)|
  \quad\text{and}\quad
  \|f\|_{\mathrm{Lip}}=\sup_{x\neq y}\frac{|f(x)-f(y)|}{|x-y|}.
\]
We define the bounded-Lipschitz distance on~$\mathcal{P}(\mathbb{R}^d)$ by
\[
  \DBL(\mu,\nu)
  =\sup_{\substack{\|f\|_\infty\leq1\\\|f\|_\mathrm{Lip}\leq1}}\int_{\dR^d}
  f\,\mathrm{d}(\mu-\nu).
\]
For all $p\geq1$, we define the $p$-Wasserstein\footnote{Or Monge, or
  Kantorovich, or transportation distance.} distance on~$\mathcal{P}_p(\mathbb{R}^d)$ by
\begin{equation}
  \label{eq:inf_formulation_Wasserstein}
  \DWP(\mu,\nu)
  =\left(\inf_{\pi\in\Pi(\mu,\nu)}\iint_{\mathbb{R}^d\times\mathbb{R}^d}|x-y|^p\pi(\mathrm{d}x,\mathrm{d}y)\right)^{\frac{1}{p}},
\end{equation}
where~$\Pi(\mu,\nu)$ is the set of probability measures on the product
space~$\mathbb{R}^d\times\mathbb{R}^d$ with marginal distributions~$\mu$
and~$\nu$. For $p\geq 1$, the application $p\mapsto \DWP$ is monotonic.
Moreover, following \cite{villani2003topics}, the Kantorovich--Rubinstein
duality theorems state that
\begin{equation}
  \label{eq:kantorovitch}
  \DW(\mu,\nu)
  =\sup_{\|f\|_\mathrm{Lip}\leq1}\int_{\dR^d} f\,\mathrm{d}(\mu-\nu)
  \quad\text{and}\quad
  \DWP(\mu,\nu)^p
  =\sup_{\substack{f\in\mathrm{L}^1(\mu),\, g\in\mathrm{L}^1(\nu)\\
      f(x)\leq g(y)+|x-y|^p}}\left(\int_{\dR^d} f\,\mathrm{d}\mu-\int_{\dR^d}
  g\,\mathrm{d}\nu\right).
\end{equation}
For any $p\geq 1$, we say that a measurable function~$f$ is dominated by~$|x|^p$ when
\[
\| f\|_{\infty,p} = \mathop{\textrm{sup}}_{x\in\dR^d}\frac{|f(x)|}{1+|x|^p}<\infty.
\]

The $p$-Wasserstein topology is the one induced on~$\mathcal{P}_p(\dR^d)$
by~$\DWP$. If~$(\nu_n)_n$ is a sequence in~$\mathcal{P}(\mathbb{R}^d)$ then
$\lim_{n\to\infty}\DBL(\nu_n,\nu)=0$ if and only if
$\lim_{n\to\infty}\int f\mathrm{d}\nu_n=\int f\mathrm{d}\nu$ for all bounded
continuous $f:\mathbb{R}^d\to\mathbb{R}$. For all $p\geq1$ and all
sequence~${(\nu_n)}_n$ in~$\mathcal{P}_p(\mathbb{R}^d)$, we have
$\lim_{n\to\infty}\DWP(\nu_n,\nu)=0$ if and only if
$\lim_{n\to\infty}\DBL(\nu_n,\nu)=0$ and
$\lim_{n\to\infty}\int|x|^p\mathrm{d}\nu_n=\int|x|^p\mathrm{d}\nu$. In other
words~$\DBL$ metrizes weak convergence, while~$\DWP$ metrizes weak convergence
plus convergence of the $p$-moment, see~\cite{villani2003topics}. Moreover,
for any~$\mu,\nu\in\mathcal{P}(\dR^d)$, we denote by $\mu * \nu $ the
convolution of~$\nu$ with~$\mu$. This probability measure is defined by its
action over test functions~$\varphi$ through
\[
(\mu * \nu)(\varphi) = \iint_{\dR^d\times \dR^d}\varphi(x-y)\mu(\mathrm{d}y)\, \nu(\mathrm{d}x).
\]

We write $X\sim P$ to say that the random variable~$X$ has law~$P$, and
$X\overset{\mathrm{d}}{=}Y$ to say that the random variables~$X$ and~$Y$ have same law.
A sequence of random variables~$(X_n)_n$ is exchangeable if for any $n\geq 1$ and any
permutation~$\sigma$ of $\{1,\hdots,n\}$ it holds
$(X_1,\hdots,X_n) \overset{\mathrm{d}}{=} (X_{\sigma  (1)},\hdots,X_{\sigma(n)})$.

We say that a sequence of random variables~$(Z_n)_n$ taking values in a
metric space~$(\mathcal{Z},\mathrm{d})$ satisfies a large deviations principle
at speed~$(\beta_n)_n$ if, for any Borel set $A\subset \mathcal{Z}$, it holds
\begin{equation}
  \label{eq:abstractLDP}
  - \inf_{\mu\in\mathring{A}} I (\mu) \leq \liminf_{n\to +\infty}
  \frac{1}{\beta_n}\log \dP(Z_n\in A)
  \leq \limsup_{n\to +\infty} \frac{1}{\beta_n}\log \dP(Z_n\in A) \leq 
  - \inf_{\mu\in\overline{A}} I (\mu),
\end{equation}
where the interior and closure are taken with respect to the topology induced
by~$\mathrm{d}$, while $I:\mathcal{Z}\to[0,+\infty]$ is lower semicontinuous
and called the rate function. If~$I$ has compact level sets for the topology
induced by~$\mathrm{d}$, we say that~$I$ is a good rate function.

We finally recall some elements of potential theory. The interaction energy
\begin{equation}
  \label{eq:def_J}
  J(\mu) = \iint_{\dR^d\times \dR^d} g(x-y)\mu(\mathrm{d}x)\mu(\mathrm{d}y)
\end{equation}
is well defined and takes values in~$\dR\cup \{+\infty\}$ when~$\mu \in \mathcal{P}(\dR^d)$
satisfies~\eqref{eq:log_int_condition}. For $d \geq 3$, this is a consequence of the fact that
the kernel $g$ is positive. For $d=2$, we use the inequality $\log|x-y| \leq \log(1+|x|) + \log(1+|y|)$ to write
\[
  \begin{aligned}
  J(\mu) & = \iint_{|x-y|\leq 1}  -\log|x-y| \, \mu(\mathrm{d}x)\mu(\mathrm{d}y) + \iint_{|x-y|\geq 1} -\log|x-y| \, \mu(\mathrm{d}x)\mu(\mathrm{d}y) \\
	  & \geq \iint_{|x-y|\leq 1} -\log|x-y| \, \mu(\mathrm{d}x)\mu(\mathrm{d}y) -\iint_{|x-y|\geq 1}\left[ \log(1+|x|) + \log(1+|y|)\right] \mu(\mathrm{d}x)\mu(\mathrm{d}y).
  \end{aligned}
\]
The second integral on the last line is finite in view of the logarithmic integrability condition~\eqref{eq:log_int_condition}, while the first one is non-negative (although possibly infinite). Next, for $\mu,\nu \in \mathcal{P}(\dR^d)$ such that $J(\mu),J(\nu)<+\infty$, we define with some abuse of notation the interaction energy~$J(\mu,\nu)$ by polarization (as done in~\cite[Chapter~I]{MR0350027}):
\[
J(\mu,\nu) = \frac12 \Big( J(\mu)+J(\nu)-J(\mu-\nu)\Big), 
\]
where $J(\mu-\nu)$ is well defined with values in~$\dR\cup \{+\infty\}$ by~\cite[Theorem~1.16]{MR0350027}. 
Moreover, for any compact set $K\subset\dR^d$,~$J$ attains its infimum over
probability measures supported on~$K$. This value is called the
\emph{capacity} of the set~$K$~\cite[Chapter~II]{MR0350027}. A measurable
set~$A$ has \emph{positive capacity} if it contains a compact set~$K$ and a
measure~$\mu$ with $\mathrm{supp}(\mu)\subset K$ and such that
$J(\mu)<+\infty$. Otherwise, the set is said to have \emph{null capacity}. A
property is said to hold \emph{quasi-everywhere} if it is satisfied on a set
whose complementary has null capacity. Although inner and outer capacities
should be considered, we know these notions coincide for Borel sets
on~$\dR^d$, see~\cite[Theorem~2.8]{MR0350027}. Denoting by~$\Pc(\dR^d)$ the
set of compactly supported probability measures, in accordance with
\cite[Theorems~1.15 and~1.16]{MR0350027}, for any~$\mu,\nu\in\Pc(\dR^d)$ with
$J(\mu)<+\infty$ and $J(\nu)<+\infty$, it holds $J(\mu-\nu)=0$ if and only if
$\mu = \nu$.

\section{From conditioning to shifting: quadratic confinement with linear constraint}
\label{sec:quadratic}

This section is devoted to the particular case where
$V(x)= |x|^2$ and the constraint is chosen according
to~\eqref{eq:xinlinintro}-\eqref{eq:linphi}.
The following theorem states that this special case is exactly solvable: the
conditioning has the effect of a shift without deformation, due to a
remarkable factorization. The proof, presented in
Appendix~\ref{sec:proofquad}, is quite elegant. It is inspired from the
seemingly unrelated work~\cite{chafai-lehec}. The result by itself appears as
a special case of the general variational approach presented in
Section~\ref{sec:LDP}.

\begin{theorem}[From conditioning to shifting]
  \label{th:Hermite}
  Let $d,n\geq2$ and $V=\left|\cdot\right|^2$, so that~\eqref{eq:Zn} holds.
  Let $X_n=(X_{n,1},\ldots,X_{n,n})$ and~$P_n$ be as in~\eqref{eq:Pn}. Then
  the equilibrium measure~$\mus$ is the uniform law on the centered ball
  of~$\mathbb{R}^d$ of radius~$1$. Moreover, almost surely and for all $p\geq1$,
  it holds
  \begin{equation}\label{eq:shifting}
    \lim_{n\to\infty}\DWP\left(\frac{1}{n}\sum_{i=1}^n\delta_{X_{n,i}},\mus\right)
    =0,
  \end{equation}
  regardless of the way we define the random variables~$X_n$ on the same
  probability space. Now let $v\in \dR^d$ with $|v|=1$, $c\in\dR$, choose
  $\varphi(x)=x\cdot v - c$ and consider $Y_n=(Y_{n,1},\ldots,Y_{n,n})$ with
  \[
    Y_n\sim
    \mathrm{Law}\left(X_n \, \left|\,
        \frac{\varphi(X_{n,1})+\cdots+\varphi(X_{n,n})}{n}=0 \right. \right).
  \]
  Then
  \[
    Y_n\overset{\mathrm{d}}{=}
    X_n+\left(c-\frac{X_{n,1}+\cdots+X_{n,n}}{n}\cdot
      v\right)(v,\ldots,v).
  \]
  Moreover, denoting by $\mu^\varphi=\delta_{cv}*\mus$, we have that almost
  surely and for all $p\geq1$, it holds
  \begin{equation}
    \label{eq:quadcv}
    \lim_{n\to\infty}\DWP\left(\frac{1}{n}\sum_{i=1}^n\delta_{Y_{n,i}},\mu^\varphi\right)=0,
  \end{equation}
  regardless of the way we define the random variables~$Y_n$ on the same
  probability space.
\end{theorem}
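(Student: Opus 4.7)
The plan is to exploit the very special algebraic structure of the pair $(V,\varphi)$: with $V$ a multiple of $|\cdot|^2$ and $\varphi$ affine, the centroid of the gas decouples from the centered configuration, so the conditioning reduces to a one-dimensional Gaussian computation. Identifying $\mus$ is immediate from~\eqref{eq:muV}: since $\Delta V = 2d$, the equilibrium density equals the constant $d/c_d$ on the interior of its support, and radial symmetry together with the fact that the unit ball of $\dR^d$ has Lebesgue volume $c_d/d$ force $\mus$ to be uniform on the closed unit ball. The convergence~\eqref{eq:shifting} in $\DWP$ for every $p\geq 1$ follows from the standard large deviations principle for Coulomb gases at speed $\beta_n$: uniqueness of the minimizer yields $\dP(\DBL(\mu_n,\mus)>\veps)\leq\mathrm{e}^{-c(\veps)\beta_n}$ for $n$ large; summability of this bound together with~\eqref{eq:betandiverge} and Borel--Cantelli gives a.s.\ weak convergence, and the strong confinement~\eqref{eq:confinement} provides exponential tail bounds sufficient to upgrade weak convergence to convergence in $\DWP$.

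For the central identity, decompose $x_i = \bar x + \tilde x_i$ with $\bar x = \frac{1}{n}\sum_{i}x_i$ and $\tilde x_i = x_i - \bar x$. The elementary identity $\frac{1}{n}\sum_i |x_i|^2 = |\bar x|^2 + \frac{1}{n}\sum_i |\tilde x_i|^2$ together with translation invariance $g(x_i-x_j)=g(\tilde x_i - \tilde x_j)$ gives
\[
H_n(x_1,\ldots,x_n) = |\bar x|^2 + H_n(\tilde x_1,\ldots,\tilde x_n).
\]
The linear map $(x_1,\ldots,x_n)\mapsto(\bar x, \tilde x_1,\ldots, \tilde x_{n-1})$ has constant Jacobian, so the density~\eqref{eq:Pn} factorizes; consequently under $P_n$ the centroid $\bar X_n$ has distribution $\cN(0, \frac{1}{2\beta_n}I_d)$ and is independent of the centered vector $(\tilde X_{n,1},\ldots,\tilde X_{n,n})$.

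The constraint reads $\bar X_n\cdot v = c$; by independence the centered vector is untouched by the conditioning, and the orthogonal decomposition of the Gaussian $\bar X_n$ along $v$ and $v^{\perp}$ yields
\[
\bigl(\bar X_n \,\big|\, \bar X_n\cdot v = c\bigr) \overset{\mathrm{d}}{=} \bar X_n + (c - \bar X_n\cdot v)v.
\]
Reassembling $X_{n,i}=\bar X_n+\tilde X_{n,i}$ produces the claimed shift identity for $Y_n$. For the limit, observe that the empirical measure of $Y_n$ is the translate of that of $X_n$ by the \emph{common} random vector $(c-\bar X_n\cdot v)v$. By~\eqref{eq:shifting}, $\frac{1}{n}\sum_i\delta_{X_{n,i}}\to\mus$ in $\DWP$ a.s., and since $\mus$ is centered one has $\bar X_n = \int x\,\mathrm{d}\mu_n(x) \to 0$ a.s.\ as well, hence the shift converges a.s.\ to $cv$. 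Continuity of translation on $(\mathcal{P}_p(\dR^d),\DWP)$ then delivers~\eqref{eq:quadcv} with limit $\delta_{cv}*\mus$.

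The only genuinely delicate step is verifying that the marginal law of $\bar X_n$ under $P_n$ is \emph{exactly} Gaussian, which requires a careful change-of-variables argument factoring Lebesgue measure into $\mathrm{d}\bar x$ times a measure on the hyperplane $\{\sum_i\tilde x_i=0\}$; once this is in hand, the remaining work is soft analysis (LDP, Borel--Cantelli, continuity of convolution) and the shift identity is essentially bookkeeping.
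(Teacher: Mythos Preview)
Your approach is the paper's: factorize $H_n$ via the quadratic $V$, identify the center-of-mass marginal as Gaussian and independent of the centered configuration, reduce the conditioning to a Gaussian computation, and combine the LDP with Borel--Cantelli for the limits. Your decomposition splits off the full centroid $\bar X_n\in\dR^d$ whereas the paper projects onto the one-dimensional span of $u=(v,\ldots,v)$ in $(\dR^d)^n$; the two are equivalent, and yours is arguably tidier since the conditional law of an isotropic Gaussian given $\bar X_n\cdot v=c$ is transparent.

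There is one genuine (if easily repaired) gap in the last step. Your continuity-of-translation argument yields a.s.\ $\DWP$-convergence only for the \emph{specific} realization $\tilde Y_n:=X_n+(c-\bar X_n\cdot v)(v,\ldots,v)$ built on the same space as $X_n$. The theorem, however, asserts~\eqref{eq:quadcv} ``regardless of the way we define $Y_n$ on the same probability space'', i.e.\ complete convergence $\sum_n\dP(\DWP(\mu_n^Y,\mu^\varphi)>\veps)<\infty$. Almost-sure convergence along one coupling does not imply this. The fix is immediate from your own ingredients: since $x\mapsto x\cdot v$ is $1$-Lipschitz and $\mus$ is centered, $|\bar X_n\cdot v|\leq\DW(\mu_n,\mus)$; coupling by common translation then gives
\[
\DWP(\tilde\mu_n,\mu^\varphi)\leq|\bar X_n\cdot v|+\DWP(\mu_n,\mus)\leq 2\,\DWP(\mu_n,\mus),
\]
so the summable LDP tail bound for $\mu_n$ transfers to $\tilde\mu_n$ and hence, by equality in law of $\DWP(\mu_n^Y,\mu^\varphi)$ and $\DWP(\tilde\mu_n,\mu^\varphi)$, to arbitrary $Y_n$. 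The paper isolates precisely this Lipschitz estimate as a separate translation lemma before running the Borel--Cantelli argument.
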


The proof of Theorem \ref{th:Hermite} relies crucially on the quadratic nature
of the confinement potential, but remains valid whatever the pair interaction,
beyond Coulomb gases, as far as it is translation invariant. More general
linear projections can be used. Indeed, if we choose $\varphi(x) = p(x) - c$
where~$p$ is a linear projection over a subspace $E\subset\mathbb{R}^d$ of
dimension~$m$ and $c\in\dR^m$, the result still holds.

\begin{remark}[Ginibre random matrices]
  When $d=2$ and $\beta_n=(\beta/2)n^2$ for some $\beta>0$, the probability
  distribution~$P_n$ in Theorem~\ref{th:Hermite} is a Coulomb gas known as the
  \emph{$\beta$-Ginibre ensemble}.
  It appears also in the Laughlin fractional quantum Hall effect.
  The case $d=2$ and $\beta=2$ is even more remarkable and is known as the
  \emph{complex Ginibre ensemble}. More precisely, let~$M$ be an $n\times n$
  random matrix with independent and identically distributed complex Gaussian
  entries with independent real and imaginary parts of mean $0$ and variance
  $1/(2n)$. Its density is proportional to $\mathrm{e}^{-n\mathrm{Tr}(MM^*)}$.
  Its eigenvalues have law~$P_n$ with $d=2$, $\beta_n=n^2$, $V(x)=|x|^2$, see
  for instance~\cite[Chapter~15]{MR2641363}. Let $\mathbb{R}^2=\mathbb{C}$,
  $v\in\mathbb{R}^2$ with $|v|=1$ and $c\in \dR$. The assumptions of
  Theorem~\ref{th:Hermite} are satisfied and the constraint in terms of
  matrices reads $\mathrm{Tr}(M)\cdot v= nc$, where we identify
  again~$\mathbb{C}$ with~$\mathbb{R}^2$. More precisely~\eqref{eq:quadcv}
  holds and the conditioned equilibrium measure reads
    \[
    \mu^\varphi(dz)=
    \frac{\ind_{|z-cv|\leq1}}{\pi}\mathrm{d}z.
  \]
  Since the entries of~$M$ are independent, we have in particular 
  the decomposition $M=M-\mathrm{diag}(M)+\mathrm{diag}(M)$
  where $M-\mathrm{diag}(M)$ and $\mathrm{diag}(M)$ are independent. In this
  case we could probably deduce the desired result on $\mu^\varphi$
  from
  the Gibbs conditioning principle for independent Gaussian random variables
  to handle the diagonal part $\mathrm{diag}(M)$ conditioned on the value of
  $\mathrm{Tr}(M)=\mathrm{Tr}(\mathrm{diag}(M))$. Some numerical experiments
  are provided in Section~\ref{sec:applications}. Note that such fixed trace
  random matrix models appear in the Physics literature, see for instance
  \cite{MR3787269,MR2303267}.
\end{remark}

In practice, we would like to consider a non-quadratic confinement and
a non-linear constraint function~$\varphi$. The numerical applications presented
in Section~\ref{sec:applications} show a much wider range of behaviours than
shifting the equilibrium measure. It turns out that the conditioning mechanism
is an instance of the \emph{Gibbs conditioning principle} from large
deviations theory. The purpose of the next section is to provide proofs in
this direction, which allow to derive the conditioned equilibrium measure in
more general contexts, of which Theorem~\ref{th:Hermite} appears as a
particular case.

\section{A general conditioning framework}
\label{sec:LDP}

As is known from the seminal work of Ben Arous and
Guionnet~\cite{arous1997large}, large deviations theory provides a natural
framework to study the concentration of empirical measures of the spectrum of
random matrices and, beyond, of singularly interacting particles systems. We
refer in particular to~\cite{MR3262506,dupuis,berman2018coulomb} and
references therein for recent accounts. Since large deviations theory is
concerned with estimating probabilities of rare events, conditioning on such a
rare event is a natural direction to follow. This procedure is generally
referred to as \emph{Gibbs conditioning principle} or \emph{maximum entropy
  principle}. This principle is explained for instance
in~\cite[Section~6.3]{MR3309619} and~\cite[Section~7.3]{MR2571413}.

When no conditioning is considered, we know that under mild assumptions the
empirical measure associated to the Gibbs measure~\eqref{eq:Pn} satisfies a
LDP with rate function~$\cE$. When the random gas is considered under
conditioning on an appropriate rare event, the Gibbs conditioning principle
states that the resulting conditioned empirical measures concentrate on a
minimizer of~$\cE$ under constraint. Proofs of this fact in our context are
presented in Section~\ref{sec:Gibbs}. Next, Section~\ref{sec:linear} studies
the corresponding constrained minimization problem for linear statistics,
while Section~\ref{sec:quadstats} is concerned with quadratic statistics.

\subsection{Gibbs conditioning}
\label{sec:Gibbs}

The goal of this section is to present an abstract Gibbs conditioning
principle and apply it to the Coulomb gas model. Most works considered
hitherto Gibbs principles associated to Sanov's
theorem~\cite{MR2571413,MR3309619,leonard2002extension}, in other words in
absence of interaction, showing that by conditioning the empirical measure,
the resulting equilibrium measure minimizes the rate function under
constraint. The same strategy can actually be applied to any exchangeable
system satisfying a large deviations principle provided the conditioning set
is an $I$-continuity set, following for
instance~\cite{csiszar1984sanov},~\cite[Section~1.2]{MR2571413}
and~\cite[Section~5.3]{MR3309619}. This is the purpose of the next
theorem, which can be of independent interest, and whose proof is
postponed to Appendix~\ref{sec:proofLDP}.

\begin{theorem}[A Gibbs conditioning principle]\label{th:Gibbs}
  Suppose that ${(Z_n)}_n$ is a sequence of random variables taking values in
  a metric space~$(\mathcal{Z},\mathrm{d})$ satisfying a large deviations
  principle at speed~${(\beta_n)}_n$, and with good rate function~$I$.
  Consider a closed set~$B$ which is $I$-continuous in the sense that
  \begin{equation}
    \label{eq:Icontinuity}
    \inf_{\mathring{B}} I = \inf_B I < +\infty.
  \end{equation}
  Then, the set of minimizers
  \begin{equation}
    \label{eq:Imin}
  \mathscr{I}_{B} = \left\{z\in\mathcal{Z}:I(z) = \inf_B I\right\}
  \end{equation}
  is a non-empty closed subset of~$B$. Moreover, for any $\varepsilon >0$,
  setting
  \[
    A_{\varepsilon} = \big\{ z\in \mathcal{Z}:\mathrm{d}(z, \mathscr{I}_{B}) >
    \varepsilon \big\},
  \]
  there exists $c_{\varepsilon}>0$ such that
  \begin{equation}
    \label{eq:GibbsI}
    \limsup_{n\to +\infty} \frac{1}{\beta_n}\log \dP \left( Z_n \in A_{\varepsilon}\
      \Big| \ Z_n\in B\right) \leq - c_{\varepsilon}.
  \end{equation}
  In particular, if we assume that $\sum_n\mathrm{e}^{-c\beta_n}<+\infty$ for
  any $c>0$, and if we define a random variable
  $Z'_n\sim\mathrm{Law}(Z_n\mid Z_n\in B)$ for all~$n$, then almost surely
  \[
    \lim_{n\to\infty}\mathrm{d}(Z'_n,\mathscr{I}_B)=0,
  \]
  regardless of the way we define the~$Z'_n$'s on the same probability space.
  Finally, in the particular case where $\mathscr{I}_B=\{z_B\}$ is a singleton
  then almost surely it holds
  \[
    \lim_{n\to\infty}Z'_n=z_B=\min_BI.
  \]
\end{theorem}

In words,~\eqref{eq:GibbsI} shows that the variables~$Z_n$ conditioned on
being in~$B$ concentrate on a minimizer of~$I$ over~$B$. It is alluring to
consider a more general LDP for the conditioned empirical measure as
in~\cite{MR3392510}, but the arguments proposed in~\cite{MR3392510} do not fit
the case of the singular rate functional~\eqref{eq:cEbis}. Our strategy is
then to restrict to an $I$-continuity set~$B$
satisfying~\eqref{eq:Icontinuity}, which allows to use the lower bound of the
LDP, very similarly to~\cite{MR3309619}.

In order to use Theorem~\ref{th:Gibbs} in the Coulomb gas setting, we start by
recalling a LDP associated with the Coulomb gas model
(see~\cite{MR3262506,dupuis}). In order to consider unbounded constraints in
what follows, we make the following assumption.

\begin{assumption}[Growth condition]\label{as:Vq}
 There exist $a>0$, $R\in\dR$ and $q > 1$ such that
  \begin{equation*}
    \label{eq:growth}
    \forall\,x\in\dR^d,\quad  V(x)\geq a |x|^q -R.
  \end{equation*}
\end{assumption}

The above growth condition not only ensures that~$V$
satisfies~\eqref{eq:confinement}, but also allows to consider a finer topology
for the LDP, see~\cite[Theorem~1.8]{dupuis}. It could certainly be relaxed
under appropriate modifications. In particular, Assumption~\ref{as:Vq} shows
that~\cite[Assumption~C'1]{dupuis} is satisfied for any function of the
form~$|x|^p$ for $1< p<q$, so~\cite[Lemma~1.1]{dupuis} applies.
Assumption~\ref{as:Vq} thus has the following consequence. Consider an
exponent $ p\in(1,q)$. Then, under~$P_n$ defined
in~\eqref{eq:Pn}-\eqref{eq:betandiverge}, the empirical measure
\[
\mu_n = \frac{1}{n} \sum_{i=1}^n \delta_{x_i}
\]
satisfies a large deviations principle~\eqref{eq:abstractLDP} in the
$p$-Wasserstein topology at speed~$(\beta_n)_n$ and with the following good
rate function:
\[
\cE_{\star} = \cE - \inf_{\mathcal{P}_p(\dR^d)} \cE,
\quad \mathrm{with}\quad \inf_{\mathcal{P}_p(\dR^d)} \cE > - \infty,
\]
where~$\cE$ is defined in~\eqref{eq:cE}. The energy~$\cE$ has additional nice
properties, which we recall below for convenience.

\begin{proposition}[Properties of the electrostatic energy]
  \label{prop:cE}
  Let~$\mathcal{E}$ be as in \eqref{eq:cE}. Suppose that
  Assumption~\ref{as:Vq} holds, and take some $p\in(1,q)$. Then, denoting by
  $D_{\cE}= \{\mu\in\mathcal{P}(\dR^d):\cE(\mu)<+\infty \}$ the domain
  of~$\cE$, the following properties are satisfied:
  \begin{itemize}
  \item $D_{\cE}$ is convex and~$\cE$ is strictly convex on~$D_{\cE}$;
  \item $D_{\cE}\subset \mathcal{P}_p(\dR^d)$ and there exists a
    unique $\mus\in\Pc(\dR^d)$ such that
    \begin{equation}
      \label{eq:minimizers}
    \cE(\mus) = \inf_{\mathcal{P}(\dR^d)}\cE=\inf_{\mathcal{P}_p(\dR^d)}\cE
    =\inf_{\Pc(\dR^d)}\cE;
    \end{equation}
  \item the minimizer~$\mus\in\Pc(\dR^d)$ is unique and satisfies the
    Euler--Lagrange conditions (where $C_{\star} = \cE(\mus)$)
    \begin{equation}
      \label{eq:poteq}
    \left\{
    \begin{aligned}
      2 g*\mus + V = C_{\star},
      &\quad \mathrm{quasi-everywhere\ in\ supp}(\mus),\\
      2 g*\mus + V \geq C_{\star},
      &\quad \mathrm{quasi-everywhere}.
    \end{aligned}
    \right.
      \end{equation}
  \end{itemize}
\end{proposition}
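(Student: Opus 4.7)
My plan would be to assemble the three items of the proposition from classical Coulomb potential theory, using as structural input the positivity of the energy form $J$ on signed measures of zero mass (recalled at the end of Section~\ref{sec:intro}) together with the coercivity provided by Assumption~\ref{as:Vq}. Indeed, most of the statement is folklore collected from~\cite{MR3820329,MR3309890,MR0350027}, and the merit of the proof lies in stitching the pieces together in the setting introduced here.

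For convexity, I would fix $\mu_0,\mu_1\in D_{\cE}$ and $t\in[0,1]$, set $\mu_t = (1-t)\mu_0 + t\mu_1$, and expand the quadratic form to get
\[
t\cE(\mu_1) + (1-t)\cE(\mu_0) - \cE(\mu_t) = t(1-t)\,J(\mu_1-\mu_0),
\]
where the $V$-contribution is linear in $\mu$ and cancels. Since $\mu_1-\mu_0$ is a compactly-summable signed measure of zero mass with finite energy, the positivity recalled from~\cite[Theorems~1.15--1.16]{MR0350027} gives $J(\mu_1-\mu_0)\geq 0$, and in fact equality only when $\mu_0=\mu_1$. This yields strict convexity on $D_{\cE}$, and the same expansion shows $D_{\cE}$ is convex (finiteness is preserved under convex combinations, with the cross term controlled by Cauchy--Schwarz for $J$).

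For the domain inclusion $D_{\cE}\subset\mathcal{P}_p(\dR^d)$ and the existence/uniqueness in~\eqref{eq:minimizers}, I would exploit the lower bound coming from Assumption~\ref{as:Vq}: writing $g(x-y) + (V(x)+V(y))/2 \geq \tfrac{a}{2}(|x|^q + |y|^q) - R - C$ for a constant $C$ bounding $g$ from below on all of $\dR^d\times\dR^d$ via the confinement trick, one obtains $\int|x|^q \mu(\mathrm{d}x) \leq \cE(\mu) + \mathrm{const}$. Since $p<q$ this gives both the inclusion and, applied to any minimizing sequence, uniform bounds on $q$-moments, hence tightness. Lower semicontinuity of $\cE$ in the weak topology (truncating $g$ from above by $g\wedge M$ and using monotone convergence plus Portmanteau) then yields existence of a minimizer. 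Strict convexity proved above makes it unique, and compactness of $\mathrm{supp}(\mus)$ follows because a minimizer cannot put mass in $\{V \geq M\}$ for $M$ large: otherwise, shifting this mass inward strictly decreases $\cE$.

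For the Euler--Lagrange conditions~\eqref{eq:poteq}, I would differentiate along $\mu_t = (1-t)\mus + t\nu$ for $\nu\in D_{\cE}$, obtaining
\[
0\leq \frac{\mathrm{d}}{\mathrm{d}t}\Big|_{t=0^+}\!\!\cE(\mu_t) = \int_{\dR^d}\bigl(2\,g*\mus + V\bigr)\,\mathrm{d}\nu - \int_{\dR^d}\bigl(2\,g*\mus + V\bigr)\,\mathrm{d}\mus.
\]
Testing with $\nu=\mus$ identifies the second integral with $C_{\star}$, while varying $\nu$ over Dirac-type approximations supported where $2g*\mus+V<C_\star$ would force a contradiction, proving the inequality quasi-everywhere. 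Evaluating the inequality against $\mus$ itself forces the equality to hold $\mus$-almost everywhere, and one upgrades this to quasi-everywhere on $\mathrm{supp}(\mus)$ using the fact that $2g*\mus+V$ is superharmonic outside $\mathrm{supp}(\mus)$ and lower semicontinuous, a standard argument from~\cite[Chapter~II]{MR0350027}. The main obstacle is precisely this last step: handling the singular nature of $g$ and upgrading pointwise statements from the reference measure $\mus$ to the capacity-theoretic notion of ``quasi-everywhere'' is the technically delicate ingredient and relies on the full machinery of inner/outer capacities, whereas the convexity and existence parts are comparatively soft.
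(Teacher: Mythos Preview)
Your outline is correct and follows the same classical route that the paper's proof merely cites: Bochner-type positivity of $J$ for convexity, coercivity of $V$ for moment bounds and tightness, lower semicontinuity for existence, strict convexity for uniqueness, and a first-variation argument for the Euler--Lagrange conditions (the paper reproduces exactly this last computation in Step~4 of the proof of Theorem~\ref{th:lincond}).

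One technical point to clean up: when you invoke \cite[Theorems~1.15--1.16]{MR0350027} to get $J(\mu_1-\mu_0)\geq 0$, those results are stated for compactly supported measures (this is how the paper recalls them at the end of Section~\ref{sec:intro}), whereas elements of $D_{\cE}$ need not have compact support. In $d\geq 3$ this is harmless since $g\geq 0$ and one can approximate by truncation, but in $d=2$ the logarithmic kernel is unbounded below and the extension requires the argument of \cite[Section~3]{MR3262506}, which is precisely the reference the paper cites. Also, your identification of the constant gives $\int(2g*\mus+V)\,\mathrm{d}\mus = 2J(\mus)+\int V\,\mathrm{d}\mus$, which differs from the paper's stated value $C_\star=\cE(\mus)$ by $J(\mus)$; your value is the standard Robin-type constant, so this is a discrepancy in the paper's statement rather than in your argument.
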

The domain is not empty since it contains for instance measures with a smooth
density over a compact support. For convenience, we recall a proof of these
classical results in Appendix~\ref{sec:proofLDP}.

\begin{remark}[Going beyond Coulomb gases and convexity]
  The LDP presented here holds for a much larger range of models than the
  Coulomb gas setting, see for instance~\cite{dupuis}. However, the
  assumptions in~\cite{dupuis} do not ensure the convexity of the rate
  function~$\cE$, which poses problems when it comes to identifying the
  equilibrium measure -- we thus stick to this setting here. In practice, the
  convexity of~$\cE$ is derived from a Bochner-type positivity of the
  interaction potential, see~\cite{MR3262506}.
\end{remark}

We are now in position to apply Theorem~\ref{th:Gibbs} to the Coulomb gas
model.

\begin{corollary}[Gibbs conditioning for Coulomb gases]
  \label{co:Gibbs}
  Let~$\cE$ be as in \eqref{eq:cE}. Suppose that Assumption~\ref{as:Vq} holds
  and take some $p\in(1,q)$. Consider a closed
  set~$B\subset \mathcal{P}_p(\dR^d)$ such that
  \begin{equation}
    \label{eq:Gibbscond}
    \inf_{\mathring{B}} \cE = \inf_B \cE < +\infty,
  \end{equation}
  where the interior is taken with respect to the $p$-Wasserstein topology.
  Then the set of minimizers
  \begin{equation}\label{eq:Emin}
    \mathscr{E}_{B}
    = \left\{\mu\in\mathcal{P}(\dR^d):\cE(\mu) = \inf_B \cE \right\}
  \end{equation}
  is a non-empty closed subset of~$B$. Moreover, if $X_n\sim P_n$ is as
  in~\eqref{eq:Pn}, and if $Y_n=(Y_{n,1},\hdots, Y_{n,n})$ is such that
  \[
  Y_n\sim \mathrm{Law}\big(X_n\, \big|\, \mu_n \in B \big),
  \quad \mathrm{with}\quad
  \mu_n = \frac{1}{n} \sum_{i=1}^n \delta_{X_{n,i}},
  \]
  then almost surely it holds 
  \[
    \lim_{n\to\infty}
    \DWP\left( \frac{1}{n} \sum_{i=1}^n \delta_{Y_{n,i}}, \mathscr{E}_B\right)
    =0,
  \]
  regardless of the way we define the random variables~$Y_n$ on the same
  probability space.
\end{corollary}

The proof of Corollary~\ref{co:Gibbs}, which can be found in
Appendix~\ref{sec:proofLDP}, is an instance of the Gibbs conditioning
principle of Theorem~\ref{th:Gibbs}: the conditioned empirical measure
concentrates almost surely on a minimizer of~$\cE$ over~$B$.

Next, rather than aiming at the greatest generality, we consider the case of
linear and quadratic statistics constraints, for which $I$-continuity can be
proved and the resulting equilibrium measure can be identified in terms of a
modified version of~\eqref{eq:poteq}.

\subsection{Linear statistics}
\label{sec:linear}

As explained in the introduction, the case of \emph{linear statistics} is of
particular importance. This motivates focusing first on conditioning
sets~$B \subset \mathcal{P}_p(\dR^d)$ of the form
\begin{equation}\label{eq:setB}
  B = \big\{ \nu\in\mathcal{P}_p(\dR^d):\nu(\varphi) \leq 0\big\},
\end{equation}
for some measurable function~$\varphi:\mathbb{R}^d\to\mathbb{R}$. This kind of
constraint was studied for example
in~\cite{MR3264829,ghosh-nishry,MR3825948,PhysRevLett.103.220603,PhysRevE.83.041105}.
In particular, the Ginibre case with $\varphi = \mathds{1}_U - c$ for a
measurable set $U\subset \dR^2$ and $c\in \dR$ is considered
in~\cite{MR3264829}. The choice $\varphi(x) = c - x\cdot v $ for $v\in\dR^d$
has been treated in Section~\ref{sec:quadratic} for the related equality
constraint. We consider here more general potentials~$V$ and constraint
functions~$\varphi$. The next assumption on~$\varphi$ ensures that~$B$ is
suitable for conditioning.

\begin{assumption}\ %
  \label{as:intB}
  \begin{itemize}
  \item Assumption~\ref{as:Vq} holds for some $q>1$;
  \item $\|\varphi\|_{\mathrm{Lip}}<+\infty$ (and thus
    $\|\varphi\|_{\infty,p}<+\infty$ for all $p\geq1$);
  \item there exists $\mu_-\in D_{\cE}$ such that $\mu_-(\varphi)<0$;
  \item there exists $\mu_+\in D_{\cE}$ such that $\mu_+(\varphi)>0$.
  \end{itemize}
\end{assumption}
 
The existence of~$\mu_-$ means that the set~$B$ has non empty interior, while
that of~$\mu_+$ implies that $B\neq \mathcal{P}_p(\dR^d)$, so that the
constraint is not trivial. Since the Gibbs principle relies on~$B$ being an
$I$-continuity set, we provide a fine analysis of the minimization of~$\cE$
over the set~$B$ defined in~\eqref{eq:setB}. We prove in particular that the
minimizer is unique with compact support, and we characterize it through an
integral equation similar to~\eqref{eq:poteq} with an additional Lagrange
multiplier. The proof of this result is presented in
Appendix~\ref{sec:prooflincond}. 

\begin{theorem}[Variational characterization]\label{th:lincond}
  Let $\mus\in\Pc(\dR^d)$ be the unconstrained equilibrium measure as in
  Proposition~\ref{prop:cE}, and let~$B$ be the set defined in~\eqref{eq:setB}.
  Suppose that Assumption~\ref{as:intB} hold for some $q>1$, and choose~$p\in(1,q)$.
  Then~$B$ is closed in the $p$-Wasserstein topology and
    \begin{equation}
      \label{eq:Icontlin}
  \inf_{\mathring{B}}\cE = \inf_B \cE<+\infty.
  \end{equation}
  Moreover
  \[
  \mathscr{E}_B=\left\{\mu\in\mathcal{P}(\mathbb{R}^d):\cE(\mu)=\inf_B\cE\right\}
  =\left\{\mu^\varphi\right\},
  \]
  where $\mu^\varphi$ has compact support and is solution to, for some
  $\alpha\geq 0$,
  \begin{equation}
    \label{eq:muphi}
    \left\{
  \begin{aligned}
    2 g * \mu^{\varphi} + V + \alpha\varphi = C_{\varphi}, &\quad \mathrm{quasi-everywhere\ in}\
    \mathrm{supp} (\mu^{\varphi}),\\
    2 g * \mu^{\varphi} + V + \alpha\varphi \geq C_{\varphi}, &\quad \mathrm{quasi-everywhere},
  \end{aligned}
  \right.
  \end{equation}
  with $C_{\varphi}= \cE(\mu^\varphi)$. Finally, one of the two following
  conditions holds:
  \begin{itemize}
  \item $\mus\in B$ and $\alpha = 0$;
  \item $\mus\notin B$, in which case $\mu^{\varphi}(\varphi)=0$ and
    $\alpha >0$. In other words, the constraint is saturated and the Lagrange
    multiplier is active.
  \end{itemize}
\end{theorem}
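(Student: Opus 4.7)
The plan is to first verify that $B$ is closed and $I$-continuous (so Theorem~\ref{theo:Gibbs} applies), then construct $\mu^\varphi$ via a parametric family of shifted equilibrium problems, producing \eqref{eq:muphi} and the dichotomy. For the first part, $B$ is closed because $\nu\mapsto\nu(\varphi)$ is continuous on $(\mathcal{P}_p(\dR^d),\DWP)$ (as $\varphi$ is Lipschitz, hence dominated by $|x|^p$). For the $I$-continuity \eqref{eq:Icontlin}, Assumption~\ref{as:intB} places $\mu_-\in D_\cE\cap\mathring{B}$, so $\inf_{\mathring{B}}\cE<+\infty$. For the reverse inequality, I would approximate any $\mu\in B\cap D_\cE$ by $\mu_t:=(1-t)\mu+t\mu_-$: since $\mu_t(\varphi)\leq t\mu_-(\varphi)<0$ we get $\mu_t\in\mathring{B}$, and convexity of $\cE$ (Proposition~\ref{prop:cE}) ensures $\cE(\mu_t)\leq (1-t)\cE(\mu)+t\cE(\mu_-)\to \cE(\mu)$, yielding $\inf_{\mathring{B}}\cE=\inf_B\cE$. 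Combined with the good rate function property, this makes $\mathscr{E}_B$ non-empty.

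For each $\alpha\geq 0$, I would set $V_\alpha:=V+\alpha\varphi$, which still satisfies Assumption~\ref{as:Vq} because $\alpha\varphi$ is sublinear while $V$ grows like $|x|^q$ with $q>1$. Proposition~\ref{prop:cE} applied to $V_\alpha$ delivers a unique compactly supported minimizer $\mu^{(\alpha)}\in\Pc(\dR^d)$ of $\mu\mapsto\cE(\mu)+\alpha\mu(\varphi)$, satisfying Euler--Lagrange conditions for $V_\alpha$. I would then study $\Phi(\alpha):=\mu^{(\alpha)}(\varphi)$: adding the two optimality inequalities at $\alpha_1<\alpha_2$ gives $(\alpha_1-\alpha_2)(\Phi(\alpha_1)-\Phi(\alpha_2))\leq 0$, so $\Phi$ is non-increasing. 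Testing optimality against $\mu_-$ together with $\cE(\mu^{(\alpha)})\geq \cE(\mus)$ yields
\[
\Phi(\alpha)\leq \mu_-(\varphi)+\frac{\cE(\mu_-)-\cE(\mus)}{\alpha},
\]
which is eventually strictly negative since $\mu_-(\varphi)<0$. Continuity of $\Phi$ would follow from stability of equilibrium measures: for $\alpha_n\to\alpha$, the supports of $\mu^{(\alpha_n)}$ are uniformly bounded (uniform growth of $V_{\alpha_n}$), and any $\DWP$-cluster point minimizes $\mu\mapsto\cE(\mu)+\alpha\mu(\varphi)$, hence equals $\mu^{(\alpha)}$ by uniqueness.

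To conclude, if $\Phi(0)=\mus(\varphi)\leq 0$ set $\alpha^*=0$ and $\mu^\varphi=\mus$, so \eqref{eq:muphi} reduces to \eqref{eq:poteq}. Otherwise $\Phi(0)>0$ and the intermediate value theorem gives $\alpha^*>0$ with $\Phi(\alpha^*)=0$; then $\mu^\varphi:=\mu^{(\alpha^*)}$ has $\mu^\varphi(\varphi)=0$, and for any $\nu\in B$,
\[
\cE(\nu)\geq \cE(\nu)+\alpha^*\nu(\varphi) \geq \cE(\mu^{(\alpha^*)})+\alpha^*\mu^{(\alpha^*)}(\varphi)=\cE(\mu^\varphi),
\]
using $\nu(\varphi)\leq 0$, optimality of $\mu^{(\alpha^*)}$ for the shifted energy, and $\mu^\varphi(\varphi)=0$. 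Hence $\mu^\varphi\in\mathscr{E}_B$, and \eqref{eq:muphi} is inherited from the Euler--Lagrange conditions for $V_{\alpha^*}$. Uniqueness follows from strict convexity of $\cE$ via the identity $\cE(\tfrac12(\mu_1+\mu_2))=\tfrac12(\cE(\mu_1)+\cE(\mu_2))-\tfrac14 J(\mu_1-\mu_2)$ together with positive-definiteness of the Coulomb kernel. The hardest part is proving continuity of $\Phi$; an alternative bypass uses infinite-dimensional convex duality with Slater's condition (satisfied via $\mu_-\in\mathring{B}\cap D_\cE$), extracting $\alpha^*$ as a maximizer of the concave dual $\alpha\mapsto\inf_\mu[\cE(\mu)+\alpha\mu(\varphi)]$, with complementary slackness giving the dichotomy directly.
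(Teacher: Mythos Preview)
Your argument is correct but takes a genuinely different route from the paper. The paper proceeds in four separate steps: (i) closedness and $I$-continuity via a contradiction argument interpolating with a near-minimizer in $\mathring{B}$; (ii) compact support of the constrained minimizer by a direct truncation argument adapted from \cite{MR3262506}, the extra work being to check that the restricted measure $\mu_K$ still satisfies $\mu_K(\varphi)<0$; (iii) existence of the multiplier $\alpha$ by an abstract convex-separation theorem, separating the convex set $R=\{(\cE(\mu)-\cE(\mu^\varphi)+a_0,\,\mu(\varphi)+a_1):a_0,a_1>0,\ \mu\in D_\cE\}$ from the origin in $\dR^2$, with the existence of $\mu_-$ serving as a qualification condition to rule out a degenerate separating hyperplane; and (iv) derivation of \eqref{eq:muphi} by linearizing $t\mapsto\cE_\alpha((1-t)\mu^\varphi+t\mu)$ at $t=0$. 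You instead fold steps (ii)--(iv) into a single application of Proposition~\ref{prop:cE} to the shifted potential $V_\alpha=V+\alpha\varphi$, obtaining compact support and the Euler--Lagrange conditions for free, and locate $\alpha^*$ by an intermediate-value argument on $\Phi(\alpha)=\mu^{(\alpha)}(\varphi)$. Your $I$-continuity proof (direct interpolation with the fixed $\mu_-$) is also slightly more economical than the paper's. The trade-off is that you must prove continuity of $\Phi$, which the paper's separation argument bypasses entirely; your stability sketch is sound but requires making the uniform support bound explicit (this follows since $F(\alpha)=\min_\mu[\cE(\mu)+\alpha\mu(\varphi)]$ is concave hence locally bounded, and $V_\alpha\geq a'|x|^q-R'$ uniformly on compact $\alpha$-sets). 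Your suggested alternative via Slater/strong duality is essentially the paper's own route.
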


We now have the following consequence of Corollary~\ref{co:Gibbs} and
Theorem~\ref{th:lincond}.

\begin{corollary}[From conditioning to confinement deformation]
  \label{cor:linGibbs}
  Suppose that Assumption~\ref{as:intB} holds for some~$q>1$, and choose~$p\in(1,q)$.
  Consider a Coulomb gas $X_n=(X_{n,1},\hdots,X_{n,n})\sim P_n$ as
  in~\eqref{eq:Pn}. Introduce $Y_n=(Y_{n,1},\hdots,Y_{n,n})$ with law given by
  \[
    Y_n\sim
    \mathrm{Law}\left(X_n\,\left|\, \frac{1}{n}\sum_{i=1}^n
        \varphi(X_{n,i})\leq 0\right.\right).
  \]
  Let $\mu^\varphi$ be as in Theorem~\ref{th:lincond}. Then almost surely it
  holds
  \begin{equation}
    \label{eq:Ynlincv}
    \lim_{n\to\infty}\DWP\left(
    \frac{1}{n}\sum_{i=1}^n\delta_{Y_{n,i}},\mu^\varphi\right)=0,
  \end{equation}
  regardless of the way we define the random variables~$Y_n$ on the same
  probability space.
\end{corollary}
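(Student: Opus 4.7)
The plan is to realize the corollary as a direct application of the Gibbs conditioning principle of Theorem~\ref{theo:Gibbs}, with the set $B$ defined by \eqref{eq:setB}, and then to invoke Theorem~\ref{th:lincond} to check the hypotheses and to identify the limit. There is essentially no new analytical content beyond these two results; the task is really to verify that every box is ticked and to rewrite the conditioning event in the language of empirical measures.

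First I would observe that the conditioning event in the statement can be rewritten in terms of the empirical measure $\mu_n=\frac{1}{n}\sum_{i=1}^n\delta_{X_{n,i}}$, since
\[
\frac{1}{n}\sum_{i=1}^n\varphi(X_{n,i}) = \mu_n(\varphi),
\]
so that $\{\frac{1}{n}\sum_i\varphi(X_{n,i})\leq 0\}=\{\mu_n\in B\}$ with $B$ as in \eqref{eq:setB}. Because $\|\varphi\|_{\mathrm{Lip}}<+\infty$ implies $\|\varphi\|_{\infty,p}<+\infty$ by Assumption~\ref{as:intB}, the pairing $\nu(\varphi)$ is well defined and finite for every $\nu\in\mathcal{P}_p(\dR^d)$, so the set $B$ is a meaningful subset of the space $\mathcal{P}_p(\dR^d)$ on which the LDP takes place.

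Next I would verify that $B$ satisfies the hypotheses of Theorem~\ref{theo:Gibbs}. Both the closedness of $B$ in the $p$-Wasserstein topology and the $I$-continuity condition
\[
\inf_{\mathring{B}}\cE = \inf_B\cE < +\infty
\]
are precisely what is granted by Theorem~\ref{th:lincond} under Assumption~\ref{as:intB}. Hence Theorem~\ref{theo:Gibbs} applies and yields
\[
\lim_{n\to\infty}\DWP\!\left(\frac{1}{n}\sum_{i=1}^n\delta_{Y_{n,i}},\mathscr{E}_B\right)=0
\]
almost surely, where $\mathscr{E}_B$ is the set of minimizers defined in \eqref{eq:Emin}.

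Finally, Theorem~\ref{th:lincond} asserts that $\mathscr{E}_B=\{\mu^\varphi\}$ is a singleton, characterized by the Euler--Lagrange conditions \eqref{eq:muphi}. Substituting this into the convergence above gives \eqref{eq:Ynlincv} and concludes the proof. If anything could be called the ``hard part,'' it is purely bookkeeping: making sure the $p$-Wasserstein topology used in Theorem~\ref{theo:Gibbs} is the same as the one in which $B$ is shown to be an $I$-continuity set in Theorem~\ref{th:lincond}, and that the exponent $p\in(1,q)$ chosen here is compatible with the LDP recalled after Assumption~\ref{as:Vq}; both are guaranteed by the common hypothesis $p\in(1,q)$ in the two theorems.
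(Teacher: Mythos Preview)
Your proposal is correct and matches the paper's own approach: the paper states Corollary~\ref{cor:linGibbs} explicitly as a consequence of Theorems~\ref{theo:Gibbs} and~\ref{th:lincond} without giving a separate proof, and your write-up spells out precisely this deduction.
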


Theorem~\ref{th:lincond} and Corollary~\ref{cor:linGibbs} show that
conditioning on a linear statistics is equivalent to changing the confinement
potential~$V$ into $V+ \alpha \varphi$ where $\alpha\geq 0$ is a constant
determined by the constraint. If~$\mus\in B$, $\alpha=0$ and the conditioning
produces no effect. Note also that the global Lipschitz condition on~$\varphi$
in Assumption~\ref{as:intB} could possibly be relaxed. For instance, if
$\|\varphi\|_{\infty,p}<+\infty$ for some $p\in(1,q)$ we expect that a
minimizing measure has compact support, and that assuming~$\varphi$ locally
Lipschitz suffices to prove Theorem~\ref{th:lincond}. We leave these
refinements to further studies.

\begin{remark}[Equality constraints]
  When considering conditioning principles, one is often interested in
  equality constraints. It is not obvious at first sight to consider a set~$B$
  defined by an equality constraint, since its interior may well be empty. A
  common strategy is to use a limiting procedure by introducing nested
  sets~\cite{MR2571413}. This is unnecessary here since we observe in
  Theorem~\ref{th:lincond} that either the equilibrium measure lies in~$B$,
  or the constraint is saturated.
\end{remark}

\begin{remark}[Projection]
  The conditioned equilibrium measure~$\mu^\varphi$ can be interpreted as an
  instance of \emph{entropic projection}. These projections have been studied
  for a long time in the context of the Sanov theorem, in other words
  independent particles or equivalently product measures without interaction
  at all, see~\cite{leonard2010entropic} and the references therein.
  Theorem~\ref{th:lincond} is therefore a precise study of such a projection
  in the context of Coulomb gases under linear statistics constraints, where
  the entropy is replaced by the electrostatic energy~$\cE$. These remarks
  also apply to Section~\ref{sec:quadstats}.
\end{remark}

\begin{remark}[Formula for constrained equilibrium measure under regularity
  assumptions]
  \label{rm:solutionlinear}
  Suppose that Assumption \ref{as:intB} holds and that~$V$ and~$\varphi$ have
  Lipschitz continuous derivatives. Then the conditioned equilibrium
  measure~$\mu^{\varphi}$ which appears in Theorem~\ref{th:lincond} and
  in Corollary~\ref{cor:linGibbs} satisfies
  \begin{equation}\label{eq:muphisol}    
    \left\{
    \begin{aligned}
      &\mu^{\varphi}
      =\frac{\Delta V + \alpha\Delta \varphi}{2c_d}, 
      &\text{almost everywhere in}\ \mathrm{supp}
      (\mu^{\varphi}),\\
      & \mu^{\varphi} =0,
      &\text{almost everywhere outside}\ \mathrm{supp}  (\mu^{\varphi}).
    \end{aligned}
    \right.
  \end{equation}
  Indeed, it suffices to apply the Laplacian to both sides of~\eqref{eq:muphi}
  and use~\eqref{eq:poisson}; see for
  example~\cite[Proposition~2.22]{MR3309890} for the technical details. We
  mention that a density is non-negative, hence
  \[
  \mathrm{supp}(\mu^\varphi)\subset\big\{ x\in\dR^d \ \big|\ \Delta V
  + \alpha \Delta\varphi \geq 0\big\}.
  \]
  The constraint may therefore significantly change the support of the
  equilibrium measure.
\end{remark}

It is now possible to come back to the translation phenomenon described in
Section~\ref{sec:quadratic} through the energetic approach considered in this
section.

\begin{proof}[Alternative proof of Theorem~\ref{th:Hermite}]
  Using~\eqref{eq:muphisol} under the assumptions of Theorem~\ref{th:Hermite},
  we have $\Delta V= 2d$ and $\Delta \varphi = 0$, so that~$\mu^{\varphi}$ is
  constant and equal to~$d/c_d$ on its support. It then remains to show that
  this support is indeed a ball of correct center and radius. For this, we
  observe that, since $|v|=1$,
  \[
    V(x) +\alpha \varphi(x) = |x|^2 + \alpha( c - x\cdot v) %
    = \left|x - \frac{\alpha v}{2}\right|^2 +
    \frac{\alpha^2}{4} + \alpha c,
  \]
  so that the effective confining potential is quadratic with variance~$1/2$
  and center $x_0 = \alpha v /2$. By radial symmetry around~$x_0$,
  $\mu^{\varphi}$ must be a uniform distribution on a ball~$B(x_0,r)$ centered
  at~$x_0$ with radius $r>0$. In order to find the value of~$\alpha$, we write
  the constraint
  \[
  |B(x_0,r)|^{-1}\int_{B(x_0,r)} x\cdot v \,\mathrm{d}x = c.
  \]
  The left hand side of the above equation reads, by symmetry,
  \[
  |B(x_0,r)|^{-1} \int_{B(x_0,r)} (x-x_0)\cdot v \,\mathrm{d}x
  +x_0\cdot v   = |B(x_0,r)|^{-1} \int_{B(0,r)} x\cdot v \,\mathrm{d}x +  x_0\cdot v
  = x_0 \cdot v.
  \]
  Since~$x_0 = \alpha v /2$ and $|v|^2=1$ we obtain
  \[
  \alpha = 2c,
  \]
  which leads to $x_0 =cv$. Finally, the value of~$\mu^\varphi$ over its
  support is~$d/c_d$, where~$c_d$ is the surface of the sphere in
  dimension~$d$. Since the volume of the sphere of radius~$r$ is equal
  to~$rc_d/d$, we obtain that $r=1$ and we reach the conclusion of
  Theorem~\ref{th:Hermite}.
\end{proof}

\subsection{Quadratic statistics}
\label{sec:quadstats}

Once the linear statistics case has been studied, it is natural to turn to
more general constraints. Considering second order statistics is a first step
in this direction, which motivates considering sets of the form, for $p> 1$,
\begin{equation}\label{eq:Bquad}
  B = \big\{ \nu\in\mathcal{P}_p(\dR^d):Q(\nu) \leq 0\big\},
\end{equation}
where $Q$ is the ``quadratic form''
\begin{equation}\label{eq:Qdef}
  Q:\mu\in\mathcal{P}_p(\dR^d)%
  \mapsto \iint_{\dR^d\times \dR^d} \psi(x,y)\mu(\mathrm{d}x)\mu(\mathrm{d}y),
\end{equation}
and $\psi:\dR^d\times\dR^d\to\dR$ is a prescribed function. For
any~$\mu\in\mathcal{P}(\dR^d)$, we denote by
\[
U_\mu^\psi:x\in\dR^d \mapsto \int_{\dR^d} \psi(x,y)\mu(\mathrm{d}y)
\]
the ``potential'' generated by~$\mu$ for the interaction~$\psi$, whenever this
makes sense. We now make some assumptions on the interaction~$\psi$ for the
functional~$Q$ to define an $I$-continuity set~$B$ in~\eqref{eq:Bquad}.

\begin{assumption}\ %
  \label{as:psi}
  \begin{itemize}
  \item Assumption~\ref{as:Vq} holds for some $q>1$;
  \item there is~$C_{\mathrm{Lip}}>0$
    such that, for any $p \geq 1$ and $\mu\in\mathcal{P}_p(\dR^d)$,
    \begin{equation}
      \label{eq:psicond}
      \big\| U_\mu^\psi \big\|_{\mathrm{Lip}}\leq C_{\mathrm{Lip}}
    \end{equation}
    (and thus $\big\| U_\mu^\psi\big\|_{\infty,p} < +\infty$ for all
    $p\geq1$);
  \item $\psi$ is symmetric, \textit{i.e.} $\psi(x,y)=\psi(y,x)$ for all
    $x,y\in\mathbb{R}^d$;
  \item $Q$ is convex;
  \item there exists $\mu_-\in D_{\cE}$ such that $Q(\mu_-)<0$;
  \item there exists $\mu_+\in D_{\cE}$ such that $Q(\mu_+)>0$.
  \end{itemize}
\end{assumption}

Before turning to the minimization under constraint, let us present a class of
functions~$\psi$ for which~\eqref{eq:psicond} holds. 
\begin{proposition}[Sufficient condition for
  \eqref{eq:psicond}]\label{prop:suffquad}
  Assume that $\psi(x,y) = \phi(x - y)$ for a function $\phi:\dR^d\to\dR$
  satisfying $\|\phi\|_{\mathrm{Lip}}<+\infty$ (and thus
  $\|\phi\|_{\infty,p}<+\infty$ for any $p \geq 1$). Then \eqref{eq:psicond} holds with
  $C_{\mathrm{Lip}}=\|\phi\|_{\mathrm{Lip}}$.
\end{proposition}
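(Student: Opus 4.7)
The statement is essentially a direct consequence of the definition of $U_\mu^\psi$ combined with the Lipschitz hypothesis on $\phi$, so the plan is a short direct computation rather than a multi-step argument.

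The plan is as follows. Fix $\mu\in\mathcal{P}_p(\dR^d)$ and $x_1,x_2\in\dR^d$. Starting from the definition
\[
U_\mu^\psi(x)=\int_{\dR^d}\phi(x-y)\,\mu(\mathrm{d}y),
\]
I would bring the difference inside the integral and use the triangle inequality for integrals, followed by the Lipschitz bound on $\phi$:
\[
\bigl|U_\mu^\psi(x_1)-U_\mu^\psi(x_2)\bigr|
\leq\int_{\dR^d}\bigl|\phi(x_1-y)-\phi(x_2-y)\bigr|\,\mu(\mathrm{d}y)
\leq\|\phi\|_{\mathrm{Lip}}\,|x_1-x_2|\,\mu(\dR^d).
\]
Since $\mu$ is a probability measure, the last factor equals $1$, which gives $\|U_\mu^\psi\|_{\mathrm{Lip}}\leq\|\phi\|_{\mathrm{Lip}}$ uniformly in $\mu$, as required.

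For the parenthetical remark that $\|\phi\|_{\infty,p}<+\infty$, I would simply note that a globally Lipschitz function satisfies $|\phi(x)|\leq|\phi(0)|+\|\phi\|_{\mathrm{Lip}}|x|$, so $|\phi(x)|/(1+|x|^p)$ is bounded on $\dR^d$ for any $p\geq1$. There is no real obstacle here; the only thing to be careful about is ensuring that the integrals defining $U_\mu^\psi$ actually make sense, which is guaranteed by the linear (hence $p$-integrable for $\mu\in\mathcal{P}_p(\dR^d)$) growth of $\phi$.
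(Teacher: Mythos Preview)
Your proof is correct and essentially identical to the paper's: both bring the difference inside the integral, apply the Lipschitz bound on $\phi$, and use that $\mu$ is a probability measure. Your additional remark justifying $\|\phi\|_{\infty,p}<+\infty$ and the well-definedness of $U_\mu^\psi$ is a nice touch that the paper leaves implicit.
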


\begin{proof}
  Fix~$p \geq 1$. For all $\nu\in\mathcal{P}_p(\dR^d)$
  and $x,x'\in\dR^d$, it holds
  \[
  \big| U_\nu^\psi(x) - U_\nu^\psi(x')\big|  \leq \int_{\dR^d}
  | \phi(x-y) - \phi(x' - y)|\nu(\mathrm{d}y)
    \leq\|\phi\|_{\mathrm{Lip}} \int_{\dR^d} |x - y - (x' - y)| \nu(\mathrm{d}y)
  = \|\phi\|_{\mathrm{Lip}}|x - x'|.
  \]
  We thus obtain~\eqref{eq:psicond}
  with~$C_{\mathrm{Lip}} = \|\phi\|_{\mathrm{Lip}}$.
\end{proof}

In addition to the regularity ensured by Proposition~\ref{prop:suffquad}, the
convexity of~$Q$ is an important part of Assumption~\ref{as:psi}. Conditions
for this convexity to hold for an interaction of the form
$\psi(x,y)=\phi(x-y)$, which is related to Bochner-type positivity, are
discussed at length in~\cite{berg1984harmonic,koldobsky2005fourier,MR3262506}.
In particular, the choice~$\psi = g$ where~$g$ is defined
in~\eqref{eq:poisson} leads to a convex~$Q$. Assumption~\ref{as:psi} then
provides a result similar to that of Theorem~\ref{th:lincond}, now leading to a
deformation of the interaction energy. The proof can be found in
Appendix~\ref{sec:proofquadstats}.

\begin{theorem}[Quadratic constraint]\label{th:quadcond}
  Let $\mus\in\cP(\dR^d)$ be the unconstrained equilibrium measure defined in
  Proposition~\ref{prop:cE}. Suppose that Assumption~\ref{as:psi} holds for
  some $q>1$, fix $p\in(1,q)$ and let~$B$ be the set defined in~\eqref{eq:Bquad}.
  Then~$B$ is closed in the $p$-Wasserstein topology and
    \begin{equation}
      \label{eq:Icontquad}
  \inf_{\mathring{B}}\cE = \inf_B \cE<+\infty.
  \end{equation}
  Moreover
  \[
    \mathscr{E}_B =\left\{\mu\in\mathcal{P}(\mathbb{R}^d):
    \cE(\mu)=\inf_B\cE\right\}=\left\{\mu^\psi\right\},
  \]
  where~$\mu^\psi$ has compact support and is solution to, for some
  $\alpha\geq 0$,
  \begin{equation}
    \label{eq:mupsi}
    \left\{
  \begin{aligned}
    2g* \mu^{\psi} + 2\alpha U_{\mu^\psi}^\psi + V
    = C_{\psi}, &\quad \mathrm{quasi-everywhere\ in}\
    \mathrm{supp} (\mu^{\psi}),\\
     2g* \mu^{\psi} + 2\alpha U_{\mu^\psi}^\psi + V \geq C_{\psi}, &\quad \mathrm{quasi-everywhere},
  \end{aligned}
  \right.
  \end{equation}
  with $C_{\psi}= \cE(\mu^\psi)$. 
  Finally, one of the two following conditions holds:
  \begin{itemize}
  \item $\mus\in B$ and $\alpha = 0$;
  \item $\mus\notin B$, in which case $Q(\mu^{\psi})=0$ and $\alpha >0$.
  \end{itemize}
\end{theorem}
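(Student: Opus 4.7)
The plan is to mirror the structure of the proof of Theorem~\ref{th:lincond}, replacing the linear constraint by a quadratic one and exploiting the convexity of $Q$ in Assumption~\ref{as:psi}. First I would establish closedness of $B$ in the $\DWP$-topology together with the $I$-continuity~\eqref{eq:Icontquad}; then existence, uniqueness and compact support of a minimizer $\mu^\psi$; and finally derive the Euler--Lagrange system~\eqref{eq:mupsi} and the dichotomy through a Lagrange multiplier argument.

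For closedness of $B$, the key observation is that~\eqref{eq:psicond} applied to $\mu=\delta_y$ yields $|\psi(x,y)-\psi(x',y)|\leq C_{\mathrm{Lip}}|x-x'|$ for every $y$, and by symmetry the same bound holds in the second variable. This upgrades to $\|U_{\mu_n}^\psi-U_\mu^\psi\|_\infty \leq C_{\mathrm{Lip}}\DW(\mu_n,\mu)$ via Kantorovich--Rubinstein duality~\eqref{eq:kantorovitch}. Combined with the uniform Lipschitz bound on $U_\mu^\psi$, the splitting
\[
Q(\mu_n)-Q(\mu)=\int\bigl(U_{\mu_n}^\psi-U_\mu^\psi\bigr)\,\mathrm{d}\mu_n+\int U_\mu^\psi\,\mathrm{d}(\mu_n-\mu)
\]
shows that $Q$ is $\DWP$-continuous on $\cP_p(\dR^d)$, hence $B$ is closed. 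To prove~\eqref{eq:Icontquad}, I would pick $\mu_-\in D_\cE\cap\mathring{B}$ supplied by Assumption~\ref{as:psi}, observe that $\cE$ attains its infimum on $B$ at some $\mu^\psi\in B$ (because $B$ is closed and $\cE$ is a good rate function), and interpolate $\mu_t=(1-t)\mu^\psi+t\mu_-$ for $t\in(0,1)$. Convexity of $Q$ and $Q(\mu_-)<0$ give $Q(\mu_t)<0$, so $\mu_t\in\mathring{B}$, while convexity of $\cE$ on $D_\cE$ yields $\cE(\mu_t)\leq(1-t)\cE(\mu^\psi)+t\cE(\mu_-)$; passing to the limit $t\to 0$ and using lower semicontinuity closes the sandwich. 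Uniqueness of $\mu^\psi$ follows from the convexity of $B$ together with the strict convexity of $\cE$ on $D_\cE$, itself a consequence of the property $J(\mu-\nu)=0\Leftrightarrow\mu=\nu$ recalled at the end of Section~\ref{sec:intro}.

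To obtain~\eqref{eq:mupsi} and select $\alpha$, I would reduce the constrained problem to the unconstrained minimization of the modified energy
\[
\cE_\alpha(\mu):=\cE(\mu)+\alpha Q(\mu)=\iint_{\dR^d\times\dR^d}\bigl(g(x-y)+\alpha\psi(x,y)\bigr)\mu(\mathrm{d}x)\mu(\mathrm{d}y)+\int_{\dR^d} V\,\mathrm{d}\mu,
\]
which has the structure of a Coulomb-type energy with confinement $V$ and pair interaction $g+\alpha\psi$. For any $\alpha\geq 0$, the convexity of $Q$ preserves the convexity and lower semicontinuity of the functional, while Assumption~\ref{as:Vq} ensures $V$ dominates the at most linearly growing term $2\alpha U_{\mu^\psi}^\psi$, so that a unique compactly supported minimizer exists and satisfies the analogue of~\eqref{eq:poteq}, which is exactly~\eqref{eq:mupsi}. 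The correct $\alpha$ is chosen through convex duality: if $\mus\in B$ then $\mus$ is trivially the constrained minimizer and $\alpha=0$; otherwise $Q(\mus)>0$, and Slater's condition (provided by $\mu_-\in\mathring{B}\cap D_\cE$) yields strong duality for the concave function $\alpha\mapsto\inf_\mu\cE_\alpha(\mu)$, producing a maximizer $\alpha>0$ whose associated minimizer of $\cE_\alpha$ saturates the constraint, $Q(\mu^\psi)=0$.

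I expect the Lagrange multiplier step to be the main obstacle. Constructing $\alpha$ and verifying that the minimizer of $\cE_\alpha$ lies on $\partial B$ requires a careful infinite-dimensional convex duality argument on $\cP_p(\dR^d)$, including checking the appropriate monotonicity and continuity of $\alpha\mapsto\inf\cE_\alpha$ and the absence of a duality gap under Slater's condition. Once this is established, translating the Euler--Lagrange conditions of the unconstrained modified problem into~\eqref{eq:mupsi}, together with the quasi-everywhere statement, follows by applying to $\cE_\alpha$ the same potential-theoretic arguments used for Proposition~\ref{prop:cE}.
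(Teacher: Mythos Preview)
Your proposal is correct and tracks the paper's strategy closely (mirror the linear case using convexity of~$Q$), but it diverges from the paper in two technical choices that are worth flagging.

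For closedness, you prove outright $\DWP$-continuity of~$Q$ via the splitting $Q(\mu_n)-Q(\mu)=\int(U_{\mu_n}^\psi-U_\mu^\psi)\,\mathrm{d}\mu_n+\int U_\mu^\psi\,\mathrm{d}(\mu_n-\mu)$ and the pointwise bound $\|U_{\mu_n}^\psi-U_\mu^\psi\|_\infty\le C_{\mathrm{Lip}}\,\DW(\mu_n,\mu)$ obtained by applying~\eqref{eq:psicond} to Dirac masses. The paper instead shows $B^c$ is open by inserting $U_\mu^\psi$ and $-U_\nu^\psi$ into the Kantorovich--Rubinstein dual formula for~$\DWP$, arriving at the same quantitative bound $|Q(\mu)-Q(\nu)|\le 2C_{\mathrm{Lip}}\varepsilon$; your route is slightly cleaner because it avoids the case analysis on whether $\|U_\mu^\psi\|_{\mathrm{Lip}}$ or $\|U_\nu^\psi\|_{\mathrm{Lip}}$ vanishes. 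Your $I$-continuity argument (interpolate directly with $\mu_-$ and let $t\to0$) is likewise a minor streamlining of the paper's contradiction version.

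The substantive difference is the Lagrange multiplier. The paper does \emph{not} invoke Slater's condition or strong duality on~$\cP_p(\dR^d)$; instead it introduces the convex set
\[
R=\bigl\{\bigl(\cE(\mu)-\cE(\mu^\psi)+a_0,\,Q(\mu)+a_1\bigr):a_0,a_1>0,\ \mu\in D_\cE\bigr\}\subset\dR^2,
\]
separates it from the origin by a hyperplane in~$\dR^2$, and reads off $(\alpha_0,\alpha_1)$ with $\alpha=\alpha_1/\alpha_0\ge0$ satisfying $\cE(\mu^\psi)\le\cE(\mu)+\alpha Q(\mu)$ for all $\mu\in D_\cE$. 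The existence of~$\mu_-$ and~$\mu_+$ is used only to rule out $\alpha_0=0$ (constraint qualification) and to force $\alpha>0$ when $\mu_\star\notin B$. This finite-dimensional separation is more self-contained than your proposed infinite-dimensional duality argument, and it sidesteps exactly the difficulty you anticipate (checking absence of a duality gap and monotonicity of $\alpha\mapsto\inf\cE_\alpha$). Once $\cE(\mu^\psi)\le\cE(\mu)+\alpha Q(\mu)$ is in hand, the paper also handles compact support differently: it is proved \emph{before} the multiplier step, directly for the constrained minimizer, by the truncation argument of Step~2 in the proof of Theorem~\ref{th:lincond} (checking that $\mu_K\in B$ for~$K$ large), rather than by appealing to the equilibrium theory for the modified kernel $g+\alpha\psi$ as you suggest. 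Both routes work, but the paper's avoids having to re-verify the hypotheses of Proposition~\ref{prop:cE} for the perturbed interaction.
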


The quadratic constraint leads to a change in the interaction contrarily to
the linear situation, which led to a change of confinement. From
Theorem~\ref{th:quadcond}, we obtain the following result.

\begin{corollary}[From conditioning to interaction deformation]
  \label{cor:quadGibbs}
  Suppose that Assumption~\ref{as:psi} holds for some $q> 1$, and fix $p\in(1,q)$.
  Consider a Coulomb gas $X_n=(X_{n,1},\hdots,X_{n,n})\sim P_n$ as
  in~\eqref{eq:Pn}, and $Y_n=(Y_{n,1},\hdots,Y_{n,n})$ with law given by
  \[
    Y_n\sim\mathrm{Law}\left(X_n\,\left|\,
    \frac{1}{n^2}\sum_{i,j=1}^n \psi(X_{n,i}-X_{n,j})\leq 0 \right.\right).
  \]
  Let~$\mu^\psi$ be the conditioned equilibrium measure as in
  Theorem~\ref{th:quadcond}. Then, almost surely, it holds
  \begin{equation}
    \label{eq:Ynquadcv}
    \lim_{n\to\infty}
    \DWP\left(
    \frac{1}{n}\sum_{i=1}^n\delta_{Y_{n,i}},
    \mu^\psi
    \right)
    =0,
  \end{equation}
  regardless of the way we define the random variables~$Y_n$ on the same
  probability space.
\end{corollary}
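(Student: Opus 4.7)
The plan is to derive the result as a direct consequence of the abstract Gibbs conditioning principle for Coulomb gases (Theorem~\ref{theo:Gibbs}), combined with the variational analysis of quadratic constraints (Theorem~\ref{th:quadcond}). The argument mirrors the derivation of Corollary~\ref{cor:linGibbs} from Theorems~\ref{theo:Gibbs} and~\ref{th:lincond} in the linear setting, so most of the mathematical content has already been absorbed into the preceding theorems.

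First, I would rewrite the conditioning event in the form $\{\mu_n\in B\}$, with $B$ the set defined in~\eqref{eq:Bquad}. Indeed, from the definition of $Q$ in~\eqref{eq:Qdef} we have
\[
Q(\mu_n) = \iint_{\dR^d\times\dR^d}\psi(x,y)\,\mu_n(\mathrm{d}x)\mu_n(\mathrm{d}y) = \frac{1}{n^2}\sum_{i,j=1}^n\psi(X_{n,i},X_{n,j}),
\]
so conditioning on $Q(\mu_n)\leq 0$ is precisely the event appearing in the statement, once the translation-invariant notation $\psi(X_{n,i}-X_{n,j})$ is read in the convolution-type framework of Proposition~\ref{prop:suffquad} (more generally, one replaces the difference by the bivariate $\psi(X_{n,i},X_{n,j})$).

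Next, I would invoke Theorem~\ref{th:quadcond}, which, under Assumption~\ref{as:psi}, supplies exactly the ingredients needed to feed into Theorem~\ref{theo:Gibbs}: the set $B$ is closed in the $p$-Wasserstein topology; the $I$-continuity condition $\inf_{\mathring{B}}\cE = \inf_B \cE < +\infty$ holds; and the minimizer set is a singleton, $\mathscr{E}_B=\{\mu^\psi\}$, with $\mu^\psi$ compactly supported and characterized by the Euler--Lagrange system~\eqref{eq:mupsi}. Applying Theorem~\ref{theo:Gibbs} with this $B$ then yields the almost-sure convergence
\[
\lim_{n\to\infty}\DWP\!\left(\frac{1}{n}\sum_{i=1}^n \delta_{Y_{n,i}}, \mathscr{E}_B\right) = 0,
\]
which, since $\mathscr{E}_B=\{\mu^\psi\}$, is exactly~\eqref{eq:Ynquadcv}.

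Because the genuinely difficult work has been carried out upstream---namely, establishing the $I$-continuity of $B$ and the existence, uniqueness and compact support of the constrained minimizer---there is no substantial obstacle at this last step: the proof of the corollary is essentially a bookkeeping exercise identifying the conditioning event with $\{\mu_n\in B\}$ and combining the two theorems. The only point that requires a small amount of care is the already-mentioned notational reconciliation between the bivariate kernel $\psi(x,y)$ of Assumption~\ref{as:psi} and the translation-invariant shorthand $\psi(X_{n,i}-X_{n,j})$ used in the statement of the corollary.
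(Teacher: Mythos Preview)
Your proposal is correct and matches the paper's approach: the corollary is stated there as an immediate consequence of Theorems~\ref{theo:Gibbs} and~\ref{th:quadcond}, with no additional argument given, and your write-up makes explicit precisely this combination (identify the conditioning event as $\{\mu_n\in B\}$, use Theorem~\ref{th:quadcond} for closedness, $I$-continuity and uniqueness of the minimizer, then apply Theorem~\ref{theo:Gibbs}). Your remark on the notational reconciliation between the bivariate $\psi(x,y)$ and the shorthand $\psi(X_{n,i}-X_{n,j})$ is also apt.
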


\begin{remark}[Higher order constraints, convexity and regularity]
  From the proof of Theorem~\ref{th:quadcond}, the Gibbs principle holds for a
  set~$B$ of the form~\eqref{eq:Bquad} for~$Q$ convex and lower
  semicontinuous. However, we would not be able to say much on the solution in
  such an abstract setting. In particular, higher order statistics could be
  considered, leading to higher order convolutions, but checking the convexity
  of the associated functional becomes less convenient. By lack of
  applications in mind, we do not consider these higher order constraints here.
\end{remark}

\begin{remark}[Formula for the constrained equilibrium measure under regularity
  assumptions]
  \label{rm:solutionquad}
  Suppose that Assumption \ref{as:psi} holds,~$V$ has Lipschitz
  continuous derivatives and~$\psi$ is~$\mathcal{C}^2(\dR^d)$. Then the
  conditioned equilibrium measure~$\mu^{\psi}$ that appears in
  Theorem~\ref{th:quadcond} and in Corollary \ref{cor:quadGibbs} satisfies
  \begin{equation}
    \label{eq:mupsisol}
    \left\{
    \begin{aligned}  
      &    c_d \mu^{\psi} -\alpha \int_{\dR^d} \Delta\psi(\cdot,y)\mu^\psi(\mathrm{d}y)
      =   \frac{\Delta V}{2} ,&
      \text{almost everywhere in}\ \mathrm{supp}
      (\mu^{\psi}),\\
      &   \mu^{\psi} = 0,&  \text{almost everywhere outside}\ \mathrm{supp}
      (\mu^{\psi}).
  \end{aligned}
    \right.
  \end{equation}
  Indeed,~\eqref{eq:mupsisol} follows by applying the Laplacian on both sides
  of~\eqref{eq:mupsi}. Note that the expression~\eqref{eq:mupsisol} is not
  explicit as in the linear constraint case of Remark~\ref{rm:solutionlinear}
  because we are not able to invert the convolution associated to~$\Delta\psi$
  in general.
\end{remark}

\section{Numerical illustration}
\label{sec:numerics}

In this section we consider the problem of sampling from conditioned distributions of the form
\begin{equation}
  \label{eq:constxi}
 \mathrm{Law}\big( X_n \,\big|\, \xi_n(X_n) = 0 \big),
\end{equation}
where $\xi_n:(\dR^d)^n\to \dR^m$ for some $m\geq 1$, and~$X_n$ is distributed
according to~$P_n$ defined in~\eqref{eq:Pn} for~$n$ fixed. We drop the
index~$n$ on~$\xi_n$ in what follows to shorten the notation, and consider
constraints taking values in~$\dR^m$ for generality. Note that we consider
equality rather than inequality constraints since we have seen in
Sections~\ref{sec:linear} and~\ref{sec:quadstats} that inequality constraints
are either satisfied by the equilibrium measure or saturated.

The first contribution of this section is to propose in Section~\ref{sec:HMC}
an algorithm for sampling from~\eqref{eq:constxi}. In a second step, we
present in Section~\ref{sec:applications} some numerical applications, where
we illustrate the predictions of Sections~\ref{sec:quadratic}
and~\ref{sec:LDP}. This is also the opportunity to explore conjectures which
are not proved in the present paper.

\subsection{Description of the algorithm}
\label{sec:HMC}

The description of the constrained Hamiltonian Monte Carlo algorithm used for
sampling follows several steps. We first make precise the structure of the
measure~\eqref{eq:constxi} in Section~\ref{sec:dirac}.
Section~\ref{sec:constlangevin} next introduces a constrained Langevin
dynamics used for sampling, while Section~\ref{sec:langevindiscretization}
gives the details of the numerical integration.

\subsubsection{Dirac and Lebesgue measures on submanifolds}
\label{sec:dirac}

Let us first describe more precisely the structure of the constrained
measure~\eqref{eq:constxi} by introducing the submanifold~$\cM_z$ associated
with the $z$-level set of~$\xi$ for $z\in\dR^m$, namely
\begin{equation}
  \label{eq:manifold}
\cM_z = \big\{ x\in(\dR^d)^n:\xi(x) = z\big\}.
\end{equation}
We use the shorthand notation $\cM = \cM_0$. To define the conditioned
measure, we rely on the following disintegration of (Lebesgue) measure
formula: for any bounded continuous test function~$\varphi$,
\begin{equation}\label{eq:conditionedmeasure}
\int_{(\dR^d)^n} \varphi(x)\,\mathrm{d}x = \int_{\dR^m}\int_{\cM_z}\varphi(x)\delta_{\xi(x)-z}
(\mathrm{d}x)\,\mathrm{d}z.
\end{equation}
This defines for any~$z\in\dR^m$ the conditioned
measure~$\delta_{\xi(x)-z}(dx)$, see~\cite[Section~2.3.2]{MR2945148}.
Since~$P_n$ is given by~\eqref{eq:Pn}, the constrained
measure~\eqref{eq:constxi} can be written with the conditioned
measure~$\delta_{\xi(x)}(dx)$ associated with~$\cM$ as: for any bounded
continuous~$\varphi$,
\begin{equation}
  \label{eq:constaverage}
  \bE\big[\varphi(X_n)\,\big|\,\xi_n(X_n) = 0 \big] =\frac{1}{Z_n^\xi} \int_{\cM} \varphi(x)
  \, \mathrm{e}^{-\beta_n H_n(x)}\delta_{\xi(x)}(\mathrm{d}x),
\end{equation}
where~$Z_n^\xi$ is a normalizing constant. The measure of interest
is therefore
\begin{equation}
  \label{eq:Pnxi}
  P_n^\xi(\mathrm{d}x) = \frac{\mathrm{e}^{- \beta_n H_n(x) } }{Z_n^\xi}
  \delta_{\xi(x)}(\mathrm{d}x).
\end{equation}
In order to obtain a better understanding of~\eqref{eq:Pnxi}, we relate the
conditioned measure proportional to~$\delta_{\xi(x)}(\mathrm{d}x)$ to the
volume measure induced on the submanifold~$\cM$ by the canonical Euclidean
metric on $(\mathbb{R}^{d})^n$, 
which we denote by~$\sigma_{\cM}(\mathrm{d}x)$, see
\cite[Section~3.2.1.1]{MR2681239}. We use to this end the co-area
formula~\cite{ambrosio2000functions,evans2018measure,MR2945148}. We denote by
$\nabla\xi = (\nabla \xi_1,\hdots,\nabla \xi_m)\in\dR^{dn\times m}$ and
introduce the Gram matrix:
\begin{equation}
  \label{eq:gram}
  G(x) = \nabla \xi(x)^T \nabla \xi(x) \in \dR^{m\times m},
  \end{equation}
  where the superscript~$T$ denotes matrix transposition. In what follows, we
  assume that the Gram matrix~\eqref{eq:gram} is non-degenerate in the sense
  that~$G(x)$ is invertible for~$x$ in a neighborhood of~$\cM$
  (see~\cite[Proposition~2.1]{MR2945148}).

\begin{proposition}\label{prop:coarea}
  The measures~$\delta_{\xi(x)}(\mathrm{d}x)$ and~$\sigma_{\cM}(\mathrm{d}x)$
  are related by
  \begin{equation}
    \label{eq:deltaxi}
  \delta_{\xi(x)}(\mathrm{d}x) = |\mathrm{det}\, G(x)|^{-\frac{1}{2}}\sigma_{\cM}(\mathrm{d}x).
  \end{equation}
  In particular it holds
  \begin{equation}
    \label{eq:Pnxileb}
  P_n^\xi(\mathrm{d}x) = \frac{\mathrm{e}^{- \beta_n H_n^\xi(x) } }{Z_n^\xi}
  \sigma_{\cM}(\mathrm{d}x),
  \end{equation}
  where
\begin{equation}
    \label{eq:Hcorrect}
    H_n^\xi(x) = H_n(x) + U_n(x),\quad U_n(x)= - \frac{1}{2\beta_n} \log |\mathrm{det}\, G(x)|. 
  \end{equation}
\end{proposition}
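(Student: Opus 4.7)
The plan is to derive both assertions directly from the co-area formula combined with the disintegration formula \eqref{eq:conditionedmeasure}. The non-degeneracy of the Gram matrix $G$ in a neighborhood of $\cM$ is exactly the regularity hypothesis that makes the co-area formula applicable, so there is no real difficulty at the level of assumptions; the work is essentially bookkeeping of two ways of slicing $(\dR^d)^n$ along level sets of $\xi$.

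First, I would recall the co-area formula in the form (see \textit{e.g.}~\cite{ambrosio2000functions,evans2018measure}): for any non-negative measurable test function $f$ on $(\dR^d)^n$,
\[
\int_{(\dR^d)^n} f(x)\,\sqrt{|\det G(x)|}\,\mathrm{d}x
= \int_{\dR^m}\int_{\cM_z} f(y)\,\sigma_{\cM_z}(\mathrm{d}y)\,\mathrm{d}z,
\]
where $\sqrt{|\det G(x)|}$ is exactly the $m$-dimensional Jacobian of $\xi$ associated with the normal bundle of $\cM_z$. Applying this identity to $f(x)=\varphi(x)\,|\det G(x)|^{-1/2}$, valid since $G$ is invertible near each $\cM_z$, produces
\[
\int_{(\dR^d)^n}\varphi(x)\,\mathrm{d}x
= \int_{\dR^m}\int_{\cM_z}\varphi(y)\,|\det G(y)|^{-1/2}\sigma_{\cM_z}(\mathrm{d}y)\,\mathrm{d}z.
\]

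I would then compare this to the disintegration identity~\eqref{eq:conditionedmeasure}. Both formulas express $\int\varphi\,\mathrm{d}x$ as an iterated integral against $\mathrm{d}z$ over the same family of level sets $\cM_z=\xi^{-1}(z)$, and both inner integrals are supported on $\cM_z$. Since disintegration against the Lebesgue measure on $\dR^m$ is unique (for Lebesgue-almost every $z$), identifying the inner measures yields
\[
\delta_{\xi(x)-z}(\mathrm{d}x) = |\det G(x)|^{-1/2}\sigma_{\cM_z}(\mathrm{d}x),
\]
and specializing to $z=0$ gives~\eqref{eq:deltaxi}.

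The second identity is then purely algebraic. Multiplying~\eqref{eq:deltaxi} by $\mathrm{e}^{-\beta_n H_n(x)}/Z_n^\xi$ in the definition~\eqref{eq:Pnxi} of $P_n^\xi$ and rewriting
\[
|\det G(x)|^{-1/2} = \exp\!\left(-\tfrac{1}{2}\log|\det G(x)|\right) = \exp\!\left(-\beta_n\,U_n(x)\right),
\]
with $U_n$ as in~\eqref{eq:Hcorrect}, yields $P_n^\xi(\mathrm{d}x) = Z_n^{\xi,-1}\mathrm{e}^{-\beta_n(H_n+U_n)(x)}\sigma_{\cM}(\mathrm{d}x)$, which is exactly~\eqref{eq:Pnxileb}. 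The only step that merits care is checking that the co-area Jacobian equals $\sqrt{|\det G|}$ with $G=\nabla\xi^T\nabla\xi$, which follows from the singular value decomposition of $\nabla\xi$ and is standard; I would simply cite~\cite[Proposition~3.1 or Section~2.3]{MR2945148}.
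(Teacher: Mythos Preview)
Your proposal is correct and is essentially the argument the paper has in mind: the paper does not spell out a proof of this proposition but simply invokes the co-area formula with references to \cite{ambrosio2000functions,evans2018measure,MR2945148}, which is exactly the route you take. Your derivation of \eqref{eq:deltaxi} by matching the co-area and disintegration slicings, followed by the algebraic rewriting to obtain \eqref{eq:Pnxileb}, is the standard computation and nothing more is needed.
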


\begin{remark}[Parametrization invariance]
  \label{rem:parametrization}
  It seems at first sight that the definition of the conditioned measure
  in~\eqref{eq:conditionedmeasure} depends on the choice of parametrization
  of~$\xi$, but it does not. To illustrate this point, we consider for
  simplicity that $m=1$ and~$\cM = \big\{ x\in(\dR^d)^n:\xi(x) = 0\big\}$.
  
  First, the induced volume measure on~$\cM$ does not depend on the
  parametrization of~$\cM$. Consider next a smooth function $F:\dR\to\dR$ such
  that $F(0) =0$ and $F'(0)\neq 0$, and the change of
  parametrization~$\cM = \big\{ x\in(\dR^d)^n:F\big(\xi(x)\big) = 0\big\}$.
  The gradient of the constraint at $x\in(\dR^d)^n$ is then
  $\nabla F(\xi(x)) = F'(\xi(x))\nabla \xi(x)$. Since~$\xi(x)=0$ for
  $x\in\cM$, the right hand side of~\eqref{eq:deltaxi} is changed only by a
  multiplicative factor~$|F'(0)| \neq 0$. Therefore, the conditioned
  probability measure~\eqref{eq:Pnxi} is left unchanged.
\end{remark}

The aim is therefore to sample from~\eqref{eq:Pnxileb}, which is not an easy
task except in very particular situations, like the one studied in
Section~\ref{sec:quadratic}. As already noted in the introduction, for the rare events that we consider, it would
not be efficient to use a direct approach based on rejection sampling as what
is done in~\cite{ghosh-nishry,MR3825948,PhysRevE.83.041105} for inequality
constraints. We thus resort to sampling on submanifolds.

For sampling measures on submanifolds, a naive penalization of the constraint
is not a good idea in general, since the dynamics used to sample it (such
as~\eqref{eq:langevin} below) are difficult to integrate because of the
stiffness of the penalized energy. Moreover, our problem is made harder by the
singularity of the pair interaction in the Hamiltonian~\eqref{eq:Hn}. It is
known that Hybrid Monte Carlo schemes (relying on a second order
discretization of an underdamped Langevin dynamics with a Metropolis--Hastings
acceptance rule) provide efficient methods for sampling such probability
distributions, see~\cite{Chafai2018} and references therein.
However, Metropolis--Hastings schemes crucially rely on the reversibility of the proposal.
An issue when
combining a Metropolis--Hastings rule with a projection on a submanifold is
that reversibility may be lost, which introduces a bias. A recent strategy has
been to introduce a reversibility check in addition to the standard
acception-rejection rule, which makes the HMC scheme under constraint
reversible~\cite{zappa2018monte,lelievre2018hybrid}. Note
that~\cite{zhang2017ergodic} proposes an interesting alternative to the scheme
used here, which is however not compatible with a Metropolis selection
procedure in its current form. We thus present the algorithm as written
in~\cite{lelievre2018hybrid}, with some simplifications and adaptations to our
context, for which we introduce next the constrained Langevin dynamics.

\subsubsection{Constrained Langevin dynamics}
\label{sec:constlangevin}

We define here an underdamped Langevin dynamics over the submanifold~$\cM$, whose invariant measure
has a marginal in position which coincides with~\eqref{eq:Pnxileb}. We motivate using
this dynamics by first considering the problem of sampling from the unconstrained
measure~$P_n$. For a given $\gamma > 0$, we define
\begin{equation}
  \label{eq:langevin}
  \left\{
  \begin{aligned}
     \mathrm{d}X_t & = Y_t \,\mathrm{d}t,\\
     \mathrm{d}Y_t & = -\nabla H_n(X_t)\,\mathrm{d}t -\gamma Y_t\, \mathrm{d}t
    + \sqrt{\frac{2\gamma}{\beta_n}}\,\mathrm{d}W_t,
  \end{aligned}
  \right.
\end{equation}
where~$(W_t)_{t\geq 0}$ is a $dn$-dimensional Wiener process. In this
dynamics,~$(X_t)_{t\geq 0}$ stands for a position, while~$(Y_t)_{t\geq 0}$
represents a momentum variable. Let us mention that the long time convergence
of the law of this process towards~$P_n$ (a difficult problem due to the
singularity of the Hamiltonian) can be proved through Lyapunov function
techniques, see~\cite{lu2019geometric} for a recent account. In practice, the
singularity of~$g$ also makes the numerical integration of~\eqref{eq:langevin}
difficult, and a Metropolis--Hastings selection rule can be used to stabilize
the numerical discretization, see~\cite{Chafai2018} and references therein.
The algorithm described below makes precise how to adapt this strategy to
sample measures constrained to the submanifold~$\cM$.

Since we aim at sampling from~\eqref{eq:Pnxileb}, it is natural to consider
the dynamics~\eqref{eq:langevin} with positions constrained to the
submanifold~\eqref{eq:manifold}, that is
\begin{equation}
  \label{eq:langevinconst}
  \left\{
  \begin{aligned}
     \mathrm{d}X_t & = Y_t \,\mathrm{d}t,\\
    \mathrm{d}Y_t & = -\nabla H_n(X_t)\,\mathrm{d}t -\gamma Y_t\, \mathrm{d}t
    + \sqrt{\frac{2\gamma}{\beta_n}}\,\mathrm{d}W_t
    + \nabla \xi(X_t)\,\mathrm{d}\theta_t,
    \\ \xi(X_t) & = 0.
  \end{aligned}
  \right.
\end{equation}
In~\eqref{eq:langevinconst}, $(\theta_t)_{t\geq 0}\in\dR^m$ is an explicit semi-martingale which
plays the role of a Lagrange multiplier enforcing the dynamics to stay
on~$\cM$ (see~\cite[Equation~(3.1)]{MR2945148} for the explicit expression,
which is however not useful for numerical simulations). Let us emphasize that
the position constraint induces a hidden constraint on the momenta
in~\eqref{eq:langevinconst}, which reads
\[
\forall\,t\geq 0,\quad \nabla \xi(X_t)^T Y_t =0.
\]
The above relation is obtained by taking the derivative of $t\mapsto\xi(X_t)$
along the dynamics~\eqref{eq:langevinconst}. This implies that momenta are
tangent to the submanifold's zero level set, which is a natural geometric
constraint~\cite{MR2945148,lelievre2018hybrid}. However, the
dynamics~\eqref{eq:langevinconst} does not sample from the conditioned
measure~\eqref{eq:constxi}, as shown in the following
proposition~\cite{MR2945148,lelievre2018hybrid}.
\begin{proposition}[Invariant measure]\label{prop:invmeaslangevin}
  The dynamics~\eqref{eq:langevinconst} has a unique invariant measure with
  marginal distribution in position given by
  \[
    \frac{\mathrm{e}^{- \beta_n H_n(x) } }{\overline{Z}_n^\xi}\sigma_{\cM}(\mathrm{d}x),
  \]
  where~$\overline{Z}_n^\xi$ is a normalization constant.
\end{proposition}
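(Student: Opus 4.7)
The plan is to identify the full invariant measure on the (co)tangent bundle of $\cM$ and then marginalize in momentum. Let $\sigma_{T_x\cM}$ denote the Lebesgue measure on the $(dn-m)$-dimensional subspace $T_x\cM = \{y\in\dR^{dn}: \nabla\xi(x)^T y = 0\}$ inherited from the ambient Euclidean structure, and define the candidate full invariant measure on the state space $\{(x,y): x\in\cM,\ y\in T_x\cM\}$ by
\[
\bar\mu(\mathrm{d}x,\mathrm{d}y) = \frac{1}{\bar Z}\exp\!\left(-\beta_n H_n(x) - \tfrac{\beta_n}{2}|y|^2\right)\sigma_{\cM}(\mathrm{d}x)\,\sigma_{T_x\cM}(\mathrm{d}y).
\]
The position marginal of~$\bar\mu$ then follows from a pointwise Gaussian integration: for every $x\in\cM$, one has $\int_{T_x\cM}\mathrm{e}^{-\beta_n|y|^2/2}\sigma_{T_x\cM}(\mathrm{d}y) = (2\pi/\beta_n)^{(dn-m)/2}$, a constant independent of~$x$ since $T_x\cM$ is a $(dn-m)$-dimensional linear subspace. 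Hence the $x$-marginal of~$\bar\mu$ is proportional to $\mathrm{e}^{-\beta_n H_n(x)}\sigma_{\cM}(\mathrm{d}x)$, which matches the statement.

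To prove invariance, I would split the generator of~\eqref{eq:langevinconst} as $\cL = \cL_{\mathrm{Ham}} + \gamma \cL_{\mathrm{OU}}$, corresponding to the constrained Hamiltonian flow (with Lagrange multiplier $\mathrm{d}\theta_t^{(1)}$ enforcing $\xi(X_t)=0$ and the hidden constraint $\nabla\xi(X_t)^T Y_t = 0$) and to the constrained fluctuation-dissipation part (with multiplier $\mathrm{d}\theta_t^{(2)}$ keeping~$Y_t$ tangent to~$\cM$). For the Hamiltonian piece, classical RATTLE-type computations on constrained Hamiltonian systems, see \cite{MR2945148}, show that the flow preserves both the total energy $H_n(x)+|y|^2/2$ and the symplectic volume on the cotangent bundle $T^*\cM$, so that $\cL_{\mathrm{Ham}}^*\bar\mu = 0$. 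For the fluctuation-dissipation piece, at frozen $x\in\cM$ one has a linear Ornstein--Uhlenbeck diffusion on the vector space $T_x\cM$ with invariant Gaussian $\mathrm{e}^{-\beta_n|y|^2/2}\sigma_{T_x\cM}(\mathrm{d}y)$, which is precisely the conditional density of~$\bar\mu$ given~$x$; standard reversibility of the OU dynamics with respect to its Gaussian invariant measure then gives $\cL_{\mathrm{OU}}^*\bar\mu = 0$. Adding the two contributions yields $\cL^*\bar\mu = 0$.

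For uniqueness, the constrained Langevin dynamics is hypoelliptic on~$T\cM$: the Brownian forcing acts on the momentum component, and the Lie bracket with the transport vector field $y\cdot\nabla_x$ recovers the tangential position directions, so H\"ormander's condition is satisfied on~$T\cM$. Combined with the fact that~$\bar\mu$ has a continuous positive density on each connected component of~$T\cM$ and that the drift is dissipative in the sense given by a Lyapunov function built from $H_n(x)+|y|^2/2$ (see \cite{lu2019geometric} for the unconstrained case, whose argument transfers by projection onto~$T\cM$), a standard irreducibility/support theorem argument yields uniqueness of the invariant measure.

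The main obstacle in this program is making the splitting rigorous in the presence of the singularity of the Coulomb pair interaction in~$H_n$ and of the Lagrange multipliers: one must ensure well-posedness of \eqref{eq:langevinconst} for all times and justify that the formal computation $\cL^*\bar\mu = 0$ genuinely characterizes stationarity. This is handled in practice by approximation arguments, restricting to the complement of the diagonals (as in \cite{Chafai2018,lu2019geometric}) and then using the Lyapunov control from the confining potential~$V$ satisfying Assumption~\ref{as:Vq}.
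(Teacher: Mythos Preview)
The paper does not actually prove Proposition~\ref{prop:invmeaslangevin}: it is stated with a citation to \cite{MR2945148,lelievre2018hybrid} and used as a black box. Your sketch is precisely the argument carried out in those references---identify the phase-space measure $\mathrm{e}^{-\beta_n(H_n(x)+|y|^2/2)}\sigma_{\cM}(\mathrm{d}x)\sigma_{T_x\cM}(\mathrm{d}y)$, split the generator into the constrained Hamiltonian flow and the tangentially projected Ornstein--Uhlenbeck part, and check each piece separately---so there is nothing to compare.

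Two minor comments. First, your caveat about the Coulomb singularity is well placed: the cited references work with smooth Hamiltonians, and the statement as written in the paper is really being asserted by analogy rather than derived in full rigor for the singular~$H_n$; your appeal to \cite{lu2019geometric} for the Lyapunov structure is the right pointer, though transferring that argument to the constrained setting is not entirely automatic. Second, for the Hamiltonian piece it is cleaner to say that the constrained flow preserves the Liouville measure $\sigma_{\cM}(\mathrm{d}x)\sigma_{T_x\cM}(\mathrm{d}y)$ on $T\cM$ together with the energy $H_n(x)+|y|^2/2$, hence any function of the energy times this Liouville measure; invoking ``symplectic volume on $T^*\cM$'' is correct but slightly oblique since the dynamics is written on $T\cM$ with the identity mass matrix.
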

Although~\eqref{eq:langevinconst} does not sample from~$P_n^\xi$, we have seen
in Section~\ref{sec:dirac} how to fix this problem. More precisely,
Proposition~\ref{prop:coarea} shows that the dynamics~\eqref{eq:langevinconst}
run with the modified Hamiltonian~$H_n^\xi$ defined in~\eqref{eq:Hcorrect}
samples from~$P_n^\xi$.

However, in practice, it may be preferable not to use the gradient of~$U_n$,
since it involves the Hessian of the constraint~$\xi$ and may be cumbersome to
compute. Therefore, we will not run the dynamics~\eqref{eq:langevinconst} with
the modified Hamiltonian~$H_n^\xi$ but with~$H_n$, and perform some
reweighting to correct for the bias arising from the missing factor
$|\mathrm{det}\,G(x)|^{-\frac{1}{2}}$. As explained in Remark~\ref{rem:modif}
below in the context of a HMC discretization, this ensures that we are
sampling from the correct target distribution while only moderately increasing
the rejection rate.

\subsubsection{Discretization}
\label{sec:langevindiscretization}

In order to make a practical use of~\eqref{eq:langevinconst} combined with
Proposition~\ref{prop:coarea}, we need to define a discretization scheme. We
present below the strategy proposed by~\cite{lelievre2018hybrid}, which relies
on a second order discretization of~\eqref{eq:langevinconst} with a
Metropolis--Hastings selection and a reversibility check.

As discussed after~\eqref{eq:langevinconst}, momenta are tangent to the level
sets of the submanifold. We introduce
\begin{equation}
  \label{eq:projMp}
  \Pi_{\cM^\perp} = \mathrm{Id} - \nabla \xi(x) G^{-1}(x) \nabla\xi(x)^T \in (\dR^d)^n,
\end{equation}
whose action is to project the momentum orthogonally to the submanifold~$\cM$.
We next define the RATTLE scheme, which is a second order discretization of
the Hamiltonian part of~\eqref{eq:langevinconst}. 
\begin{algo}[RATTLE]
  \label{algo:RATTLE}
  Starting from a configuration $(x_m,y_m)$ with $x_m\in\cM$ and $\nabla \xi(x_m)^T y_m =0$,
  \begin{enumerate}
    \setlength\itemsep{0.6em}
  \item $\displaystyle y_{m+\frac{1}{4}} = y_m - \frac{\Delta t}{2} \nabla H_n(x_m)$;
  \item $\displaystyle x_{m + \frac{1}{2}} = x_m + \Delta t y_{m+\frac{1}{4}}$;
  \item compute the Lagrange multiplier~$\theta_m\in\dR^m$ associated
    with~$x_{m + \frac{1}{2}}$ to enforce the constraint, using
    Algorithm~\ref{algo:newton} below (if convergence has been reached);
  \item project as
    $\displaystyle x_{m+1} = x_{m + \frac{1}{2}} + \nabla\xi(x_m)\theta_m$
    and
    $\displaystyle y_{m + \frac{1}{2}} = y_{m+\frac{1}{4}} + \nabla
    \xi(x_m)\theta_m/\Delta t$;
  \item $\displaystyle y_{m+\frac{3}{4}} = y_{m+\frac{1}{2}} - \frac{\Delta t}{2} \nabla H_n(x_{m+1})$;
  \item $\displaystyle y_{m+1} = \Pi_{\cM^\perp} y_{m+\frac{3}{4}}$ where the projector $\Pi_{\cM^\perp}$ is
    defined in~\eqref{eq:projMp}.
  \end{enumerate}
  Finally, return $(x_{m+1},y_{m+1})$.
\end{algo}
A particular feature of this
numerical integrator is to be reversible up to momentum reversal (\emph{i.e.} evolving $(\hat{x}_{m+1},-\hat{y}_{m+1})$ with one step of RATTLE leads to $(\hat{x}_{m},-\hat{y}_{m})$), provided the same Lagrange multipliers are found when integrating from~$(\hat{x}_{m+1},-\hat{y}_{m+1})$. This property is crucial in order to simplify the
Metropolis--Hastings selection rule, see Step~(4) in Algorithm~\ref{algo:GHMC} below. This is true for sufficiently small timesteps, see~\cite[Section~VII.1.4]{hairer2006geometric} for further details. For larger timesteps, some care is needed since reversibility may be lost, as we discuss below.

%

We can now present the algorithm used to sample the conditioned distribution
by integrating~\eqref{eq:langevinconst}, which runs as follows. First, the
momenta~$y_m$ are updated to~$\tilde{y}_m$ according to the Ornstein--Uhlenbeck
process in~\eqref{eq:langevinconst} projected orthogonally to the submanifold
with~$\Pi_{\cM^\perp}$. Next, we evolve the configuration $(x_m,\tilde{y}_m)$
with a RATTLE step, leading to~$(\hat{x}_{m+1},\hat{y}_{m+1})$. However,
reversibility may be lost in the procedure for two reasons: either it is not
possible to perform one step of RATTLE starting
from~$(\hat{x}_{m+1},-\hat{y}_{m+1})$, or the image
of~$(\hat{x}_{m+1},-\hat{y}_{m+1})$ differs from $(x_m,-\tilde{y}_m)$;
see~\cite[Section~2.2.4]{lelievre2018hybrid} for a more precise discussion and
graphical illustrations of these issues. In both cases, the RATTLE move is
rejected, and the configuration is updated as~$(x_m,-\tilde{y}_m)$ (mind the
fact that momenta need to be reversed here when rejecting; see for
instance~\cite[Section~2.1.4.1]{MR2681239} for a discussion of this point).
Finally, a Metropolis--Hastings acceptance rule corrects for the time step
bias in the sampling. The full algorithm reads as
follows~\cite{lelievre2018hybrid}.
\begin{algo}[Constrained HMC with reversibility check]\label{algo:GHMC}
    Fix $T>0$, $\Delta t >0$, $\gamma >0$, $\Km\geq1$, $\Nit=\CEIL{T/\Delta t}$
    and choose an initial configuration $(x_0,y_0)$ with $x_0\in\cM$ and
    $\nabla \xi(x_0)^T y_0 =0$ (possibly obtained by projection). Set also
    thresholds~$\varepsilon_{\mathrm{rev}},\varepsilon_{\mathcal{N}}>0$, and
    define~$\eta_{\Delta t} = \mathrm{e}^{-\gamma \Delta t}$. For
    $m=0,\hdots,\Nit-1$, run the following steps:
    \begin{enumerate}
      \setlength\itemsep{0.5em}
    \item resample the momenta as
      \[
      \tilde{y}_{m} = \Pi_{\cM^\perp}\left(
      \eta_{\Delta t}y_m+\sqrt{\frac{1-\eta_{\Delta t}^2}
      {\beta_n}}G_m \right),
      \]
      where $G_m$ are independent $dn$-dimensional standard Gaussian random
      variables;
    \item perform one step of the RATTLE scheme (Algorithm~\ref{algo:RATTLE}) starting
        from $(x_m, \tilde{y}_{m})$, providing
        $(\hat{x}_{m+1},\hat{y}_{m+1})$ if the Newton algorithm with~$\Km$,
        $\varepsilon_{\mathcal{N}}$ has converged; otherwise
        set~$(x_{m+1},y_{m+1})=(x_m,-\tilde{y}_m)$ and increment~$m$;
      \item compute a RATTLE backward step from
        $(\hat{x}_{m+1},-\hat{y}_{m+1})$,
        providing~$(x_{m}^{\mathrm{rev}},y_{m}^{\mathrm{rev}})$ if the Newton
        algorithm with~$\Km$, $\varepsilon_{\mathcal{N}}$ has converged. If
        the Newton algorithm has not converged or if
        $|x_m-x_{m}^{\mathrm{rev}}|> \varepsilon_{\mathrm{rev}}$, reject the
        move by setting~$(x_{m+1},y_{m+1})=(x_m,-\tilde{y}_m)$ and
        increment~$m$;
    \item compute the Metropolis--Hastings ratio
      \begin{equation}
        \label{eq:MHratio}
        p_m = 1\wedge \mathrm{exp}\left[- \beta_n\left(
            H_n^\xi(\hat{x}_{m+1}) +\frac{|\hat{y}_{m+1}|^2}{2}
            -H_n^\xi(x_{m})  -\frac{|\tilde{y}_{m}|^2}{2}
          \right)\right],
      \end{equation}
      and set
      \[
        (x_{m+1},y_{m+1})=\left\{
          \begin{aligned}
            (\hat{x}_{m+1},\hat{y}_{m+1}) & \ \text{with probability}\ p_m,
            \\
            (x_m, - \tilde{y}_{m}) & \ \text{with probability}\ 1 - p_m.
          \end{aligned}
        \right.
      \]
    \end{enumerate}
  \end{algo}

  A particularity of our implementation with respect
  to~\cite{lelievre2018hybrid} is that we run the dynamics with the
  Hamiltonian~$H_n$ while the Metropolis--Hastings ratio~\eqref{eq:MHratio}
  (Step~(4) of Algorithm~\ref{algo:GHMC}) is computed with the modified
  Hamiltonian~$H_n^\xi$. As pointed out in Remark~\ref{rem:modif} below, the
  modification induced by the correction term~$U_n$ in~\eqref{eq:Hcorrect} is
  generally small. Therefore, considering~$H_n$ for the dynamics allows to
  avoid the computation of the Hessian of~$\xi$, while the selection rule
  corrects for this small error.

  In order for our description to be complete, we define how to project the
  position on~$\cM$ (Step~(3) in Algorithm~\ref{algo:RATTLE}). We use a
  variant of Newton's algorithm defined below.
  \begin{algo}[Newton algorithm]
    \label{algo:newton}
    Consider a tolerance threshold~$\varepsilon_{\mathcal{N}}>0$ and a maximal
    number of steps~$\Km\geq1$. Starting from an initial
    position~$x^0\notin\cM$ and a Lagrange multiplier $\theta^0 = 0\in\dR^m$,
    the projection procedure reads as follows: while $k\leq \Km$,
    \begin{enumerate}\setlength\itemsep{0.5em}
    \item compute $M_k = \nabla \xi(x^0)^T \nabla\xi (x^k)\in\dR^{m\times m}$;
    \item set $\theta^{k+1} = \theta^k - M_k^{-1}\xi(x^k)$;
    \item define the new position $x^{k+1} = x^k + \nabla \xi(x^0)^T \theta^{k+1}$;
    \item if $\mathrm{max}\big(|\theta^{k+1}-\theta^{k}|,|\xi(x^k)|\big)
      \leq \varepsilon_{\mathcal{N}}$,
      the algorithm has converged, else go back to Step~(1).
    \end{enumerate}
    If the algorithm has converged in~$k\leq \Km$ steps, return the
    value~$\theta^k$ of the Lagrange multiplier.
  \end{algo}
  We emphasize that a fixed direction~$\nabla\xi(x^0)$ is considered for
  projection, which is needed to preserve the reversibility property of the
  final algorithm~\cite{lelievre2018hybrid}. The procedure works provided the
  matrix~$M_k$ defined in Step~(1) is indeed invertible at each step of the
  inner loop, and we refer to~\cite{lelievre2018hybrid} for more details. We
  also mention that we can consider different stopping criteria for the Newton
  algorithm (Step~(4) in Algorithm~\ref{algo:newton}), for instance by using
  the relative error $|x^{k+1}-x^k|$. We are now ready to use
  Algorithm~\ref{algo:GHMC} to sample from the constrained distribution,
  challenge the theoretical results of Sections~\ref{sec:quadratic}
  and~\ref{sec:LDP} and explore conjectures. An example of implementation is
  provided in the arXiv version of this paper~\cite{CFS19}.

  \begin{remark}[Rejection sources]
    For a standard HMC scheme, rejection is only due to the
    Metropolis--Hastings selection (Step~(4) in Algorithm~\ref{algo:GHMC}).
    Here, rejection can be due to the following reasons:
    \begin{itemize}
    \item the Newton algorithm in Step~(2) (forward move) has not converged;
    \item the Newton algorithm in Step~(3) (backward or reversed move) has not
      converged;
    \item the reversibility check in Step~(3) has failed;
    \item the Metropolis rule in Step~(4) has rejected the step.
    \end{itemize}
    In any case, the first step resamples the momentum variable according to
    the Ornstein--Uhlenbeck process part in~\eqref{eq:langevin}, and rejection
    comes with a reversal of momenta. Let us also mention that, when the
    ratio~\eqref{eq:MHratio} is computed with~$H_n$ (up to an additive
    constant), the Metropolis rejection rate should scale as~$\Dt^3$. This
    will be the case in our situation when~$\xi$ and~$\varphi$ are linear
    since in this case the additional term~$U_n$ in~\eqref{eq:Hcorrect} is
    constant. This rate of decay is confirmed by numerical simulations (see
    Figure~\ref{fi:rate}).
  \end{remark}
  
  \begin{remark}[Correction term]
    \label{rem:modif}
    Proposition~\ref{prop:coarea} shows that the Hamiltonian of the system
    must be modified in order for the constrained
    dynamics~\eqref{eq:langevinconst} to sample from the probability
    distribution~\eqref{eq:Pnxi}. However, in Algorithm~\ref{algo:GHMC}, we
    run the dynamics with~$H_n$ and perform the selection with~$H_n^\xi$. This
    is motivated by the following scaling argument. Consider
    \[
    \xi(x) = \frac{1}{n}\sum_{i=1}^n \varphi(x_i)
    \]
    for some real-valued smooth function~$\varphi$, which corresponds to the
    linear constraint situation described in Section~\ref{sec:linear}. In this
    case, the corrector term in~\eqref{eq:Hcorrect} reads
    \[
      U_n(x) = -\frac{1}{2 \beta_n} \log \left(\sum_{i=1}^n |\nabla
        \varphi(x_i)|\right),
    \]
    up to an additive constant. This means that the correction term
    in~\eqref{eq:Hcorrect} scales like~$\mathrm{O}(\log(n)/n^2)$ when
    $\beta_n = \beta n^2$, whereas the remainder of the Hamiltonian
    is~$\mathrm{O}(1)$. As a result, the correction is much smaller than the
    Hamiltonian energy~$H_n$, and we may neglect it in the dynamics. This
    allows to avoid computing the Hessian of the constraint~$\xi$ at the price
    of a small increase in the rejection rate.
  \end{remark}

\subsection{Numerical results}
\label{sec:applications}

\subsubsection{Linear statistics with linear constraint: the influence of confinement}\label{sec:applin}

Since one motivation for our work was to study the trace constraint with
quadratic confinement, as detailed in Section~\ref{sec:quadratic}, we first
consider the model presented in Theorem~\ref{th:Hermite} with $d=2$,
$\beta_n =n^2$ and
\begin{equation}
  \label{eq:philinsimu}
\forall\,x\in\dR^2,\quad \varphi(x) = c - x\cdot v ,\quad v= 
\begin{pmatrix}
  1 \\ 0
\end{pmatrix}.
\end{equation}
We run Algorithm~\ref{algo:GHMC} setting $n=300$, $T=10^6$, $\Delta t = 0.5$,
$\gamma =1$ and
$\varepsilon_{\mathcal{N}} = \varepsilon_{\mathrm{rev}} = 10^{-12}$ with
$\Km=20$. In all the simulations in dimension~2, the initial configuration is
drawn uniformly over~$[-1,1]^2$. We first set $V(x)=|x|^2$ and $c=1$, so that
according to Theorem~\ref{th:Hermite}, the conditional law of the empirical
measure~$\mu_n$ under~$P_n$ with the constraint $\mu_n(\varphi)=0$ should
converge in the limit of large~$n$ towards a unit disk centered at $(1,0)$
in~$\dR^2$. The simulations presented in Figure~\ref{fi:quadlin} show a very
good agreement with the expected result.

\begin{figure}[!ht]
  \centering
  \includegraphics[width=.49\textwidth]{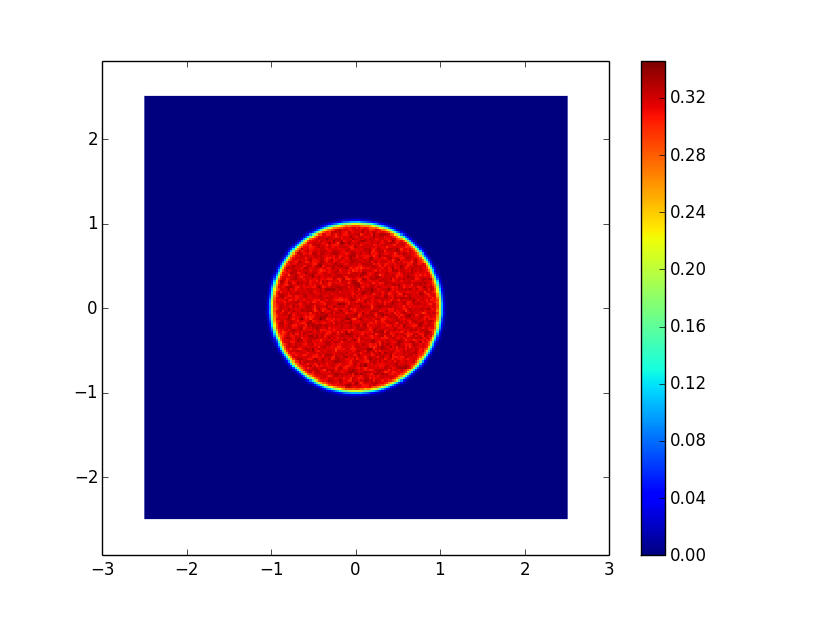}
  \includegraphics[width=.49\textwidth]{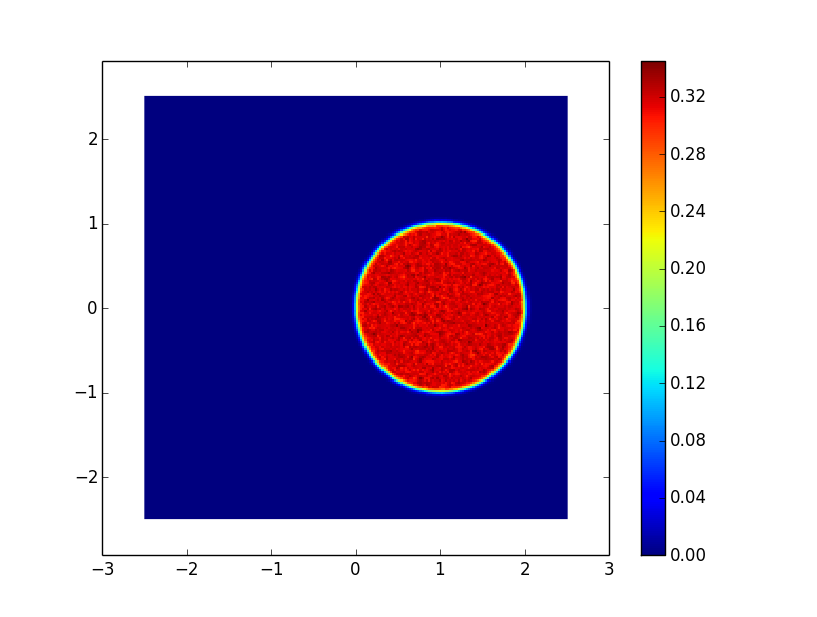}  
  \caption{\label{fi:quadlin} Study of the quadratic confinement for $n=300$
    without constraint (left) and with the constraint~\eqref{eq:philinsimu}
    (right). We see that the constrained measure is a unit disk 
    centered at~$(1,0)$.}
\end{figure} 

In this simple case, the Hamiltonian in~\eqref{eq:Hcorrect} is only modified
by a constant, so we expect the Metropolis rejection rate (Step~(5) in
Algorithm~\ref{algo:GHMC}) to scale like~$\mathrm{O}(\Delta t^3)$ when
$\Delta t\to 0$. In Figure~\ref{fi:rate}, we plot this rate in log-log
coordinates (setting here $n=50$ to reduce the computation time). The slope is
indeed close to~$3$, which confirms our expectation.

\begin{figure}[!ht]
  \centering
  \includegraphics[width=.6\textwidth]{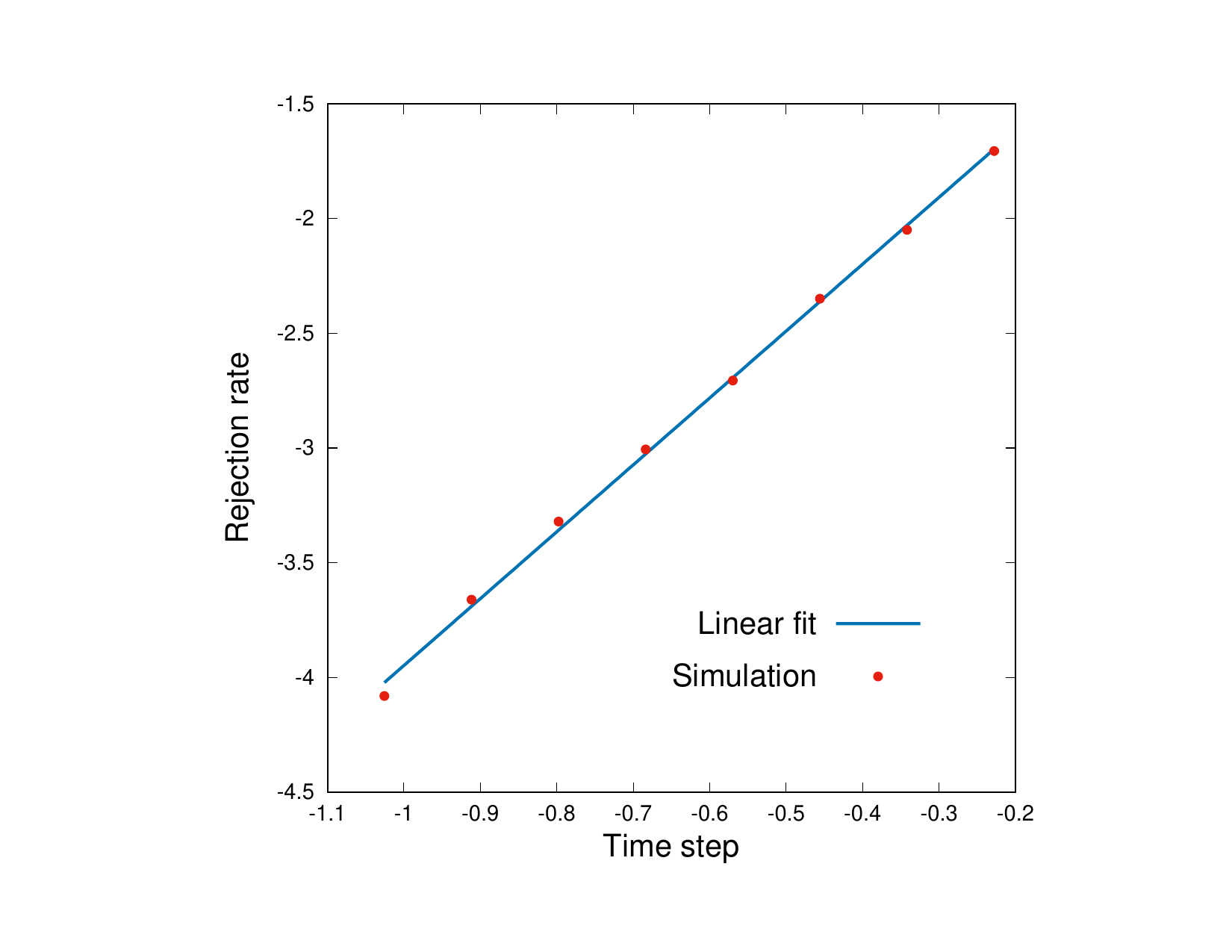}
  \caption{\label{fi:rate} Study of the rejection rate of the
    Metropolis--Hastings selection rule with $n=50$ (Step~(5) in
    Algorithm~\ref{algo:GHMC}) in log-log coordinates. The slope of the linear
    fit is about~$2.9$.}
\end{figure} 

In order to show that the translation phenomenon is specific to the quadratic
confinement, we first consider the case of a quartic confinement potential,
namely $V(x) = |x|^4/4$ subject to the constraint~\eqref{eq:philinsimu} with
$c=0.5$. This choice for~$V$ together with~$\varphi$ defined
in~\eqref{eq:philinsimu} satisfies Assumption~\ref{as:intB}, so that
Theorem~\ref{th:lincond} applies. However, no analytic solution is a priori
available because the rotational symmetry is lost. The unconstrained
equilibrium measure in Figure~\ref{fi:quarticlin} (left) shows a depletion of
the density around~$(0,0)$. In Figure~\ref{fi:quarticlin} (right), we observe
that the shape of the distribution is significantly modified by the
constraint, and does not possess any rotational invariance. As could have been
expected, the particles close to the origin feel a weaker confinement, so the
distribution is more concentrated near the outer edge.

\begin{figure}[!ht]
  \centering
  \includegraphics[width=.49\textwidth]{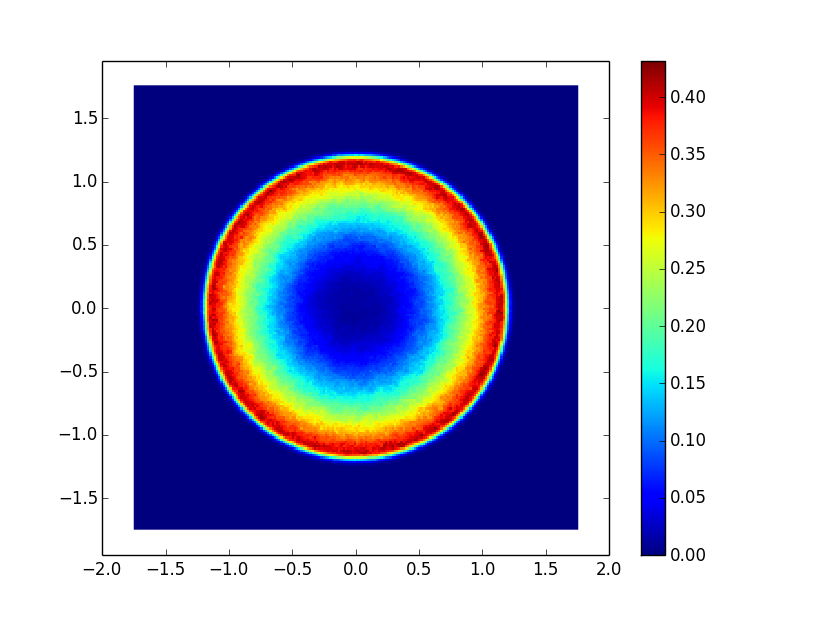}
  \includegraphics[width=.49\textwidth]{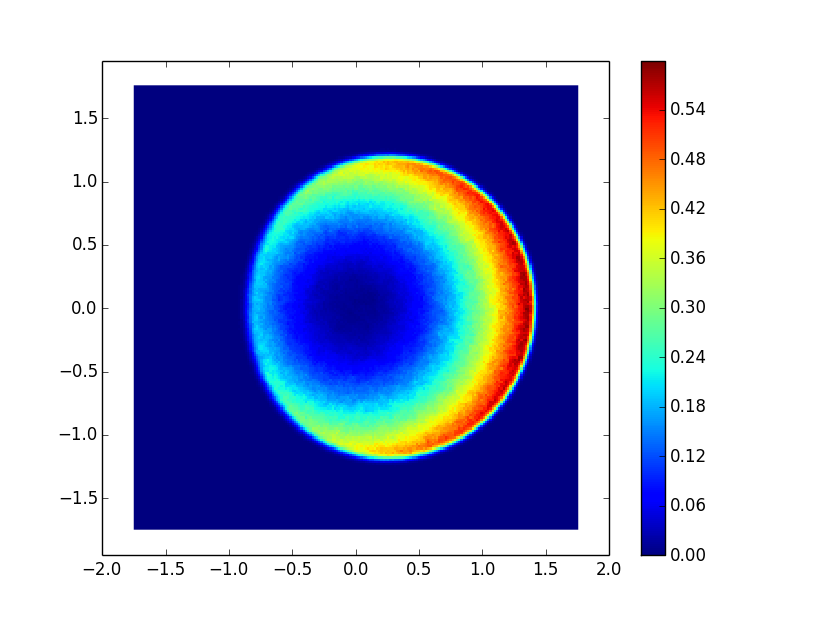}  
  \caption{\label{fi:quarticlin} Study of the quartic confinement for $n=300$
    without constraint (left) and with the constraint~\eqref{eq:philinsimu}
    (right). The shape of the equilibrium measure is significantly
    distorted by the constraint.}
\end{figure} 

Another interesting case is when the confinement is weaker than quadratic,
\emph{e.g.} $V(x) = \frac{2}{3}|x|^{\frac{3}{2}}$, for which
Theorem~\ref{th:lincond} still applies. The results are shown in
Figure~\ref{fi:weaklin}, considering again the
constraint~\eqref{eq:philinsimu} with $c=0.5$. We observe that the shape of
the distribution also significantly changes by spreading in the direction of
the constraint. This can be interpreted as follows: since the confinement is
stronger at the origin, the more likely way to observe a fluctuation of the
barycenter (or less costly in terms of energy) is in this case to spread the
distribution.

Quite interestingly, for both potentials the distribution obtained
as~$c\to +\infty$ seems to reach a limiting ellipsoidal shape, under an
appropriate rescaling (figures not shown here). Studying more precisely these
limiting shapes and the rate at which they appear is an interesting open
problem.

\begin{figure}[!ht]
  \centering
  \includegraphics[width=.49\textwidth]{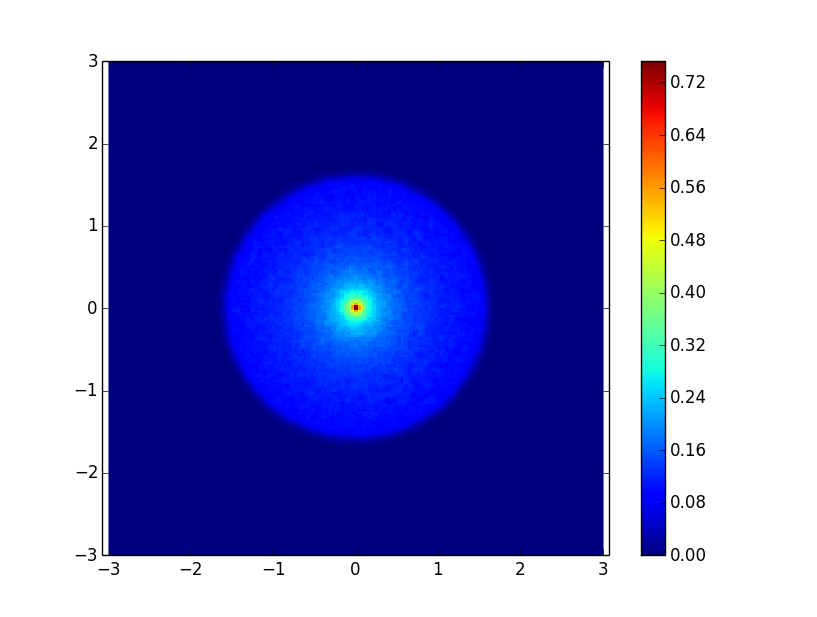}
  \includegraphics[width=.49\textwidth]{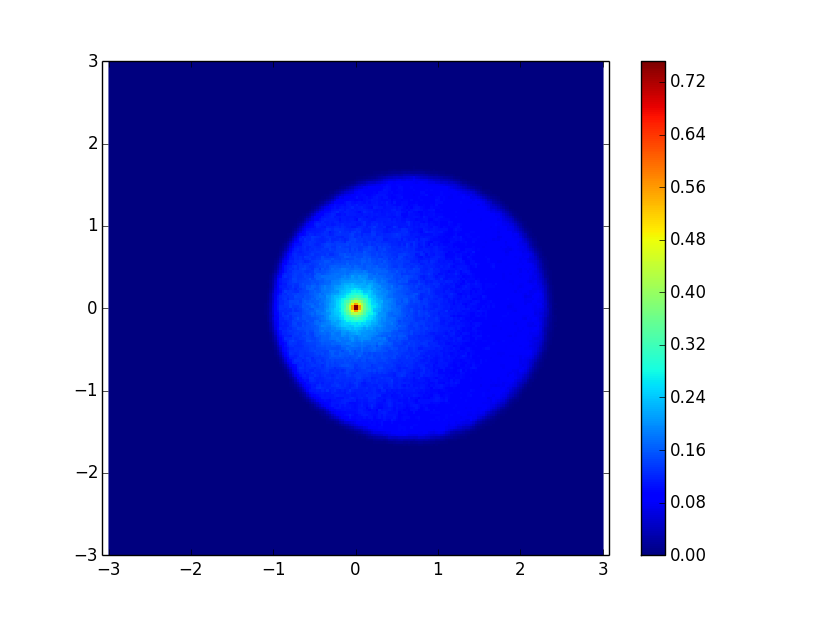}  
  \caption{\label{fi:weaklin} Study of the weak confinement for $n=300$ without
    constraint (left) and with the constraint~\eqref{eq:philinsimu} (right).
    The constraint now spreads the equilibrium measure to the right.
  }
\end{figure}

\subsubsection{Other constraints in dimension two}

In order to illustrate the efficiency of our algorithm in situations richer
than the linear constraint with a linear function~$\varphi$, we now present
two other cases. First, we keep a linear constraint with $V(x)=|x|^2$, but set
\[
  \forall\,x\in\dR^2,\quad \varphi(x) = c - \frac{\cos(5x_1) +\cos(5x_2)}{2},
\]
where~$x_1$ and~$x_2$ denote here the first and second coordinates of
$x\in\dR^2$. This choice is motivated by Remark~\ref{rm:solutionlinear}: since
the Laplacian of~$\varphi$ takes positive and negative values, we expect the
particles to concentrate in some regions of~$\dR^2$, possibly leading to a
phase separation. Note also that, in order for the two last conditions in
Assumption~\ref{as:intB} to be satisfied, we need to choose $c\in(-1,1)$. We
set again $n=300$ but $\Dt = 0.4$ to reduce the rejection rate. The other
parameters are the same as in Section~\ref{sec:applin}. We plot in
Figure~\ref{fi:lincos} the result of the simulation for~$c=0.2$ and $c=0.5$.
The particles concentrate in the regions where the cosines are higher, which
seems to lead to a phase separation when~$c$ comes close to~$1$.

\begin{figure}[!ht]
  \centering
  \includegraphics[width=.49\textwidth]{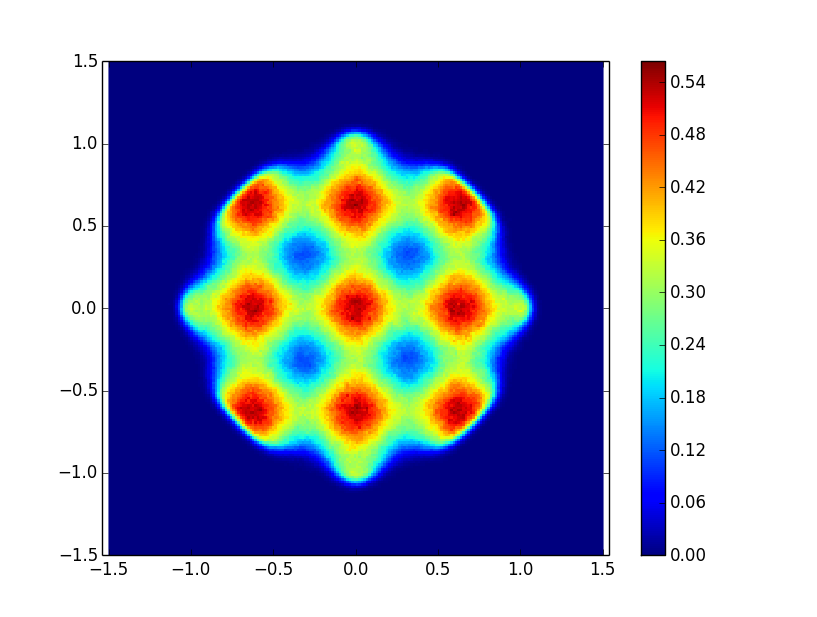}
  \includegraphics[width=.49\textwidth]{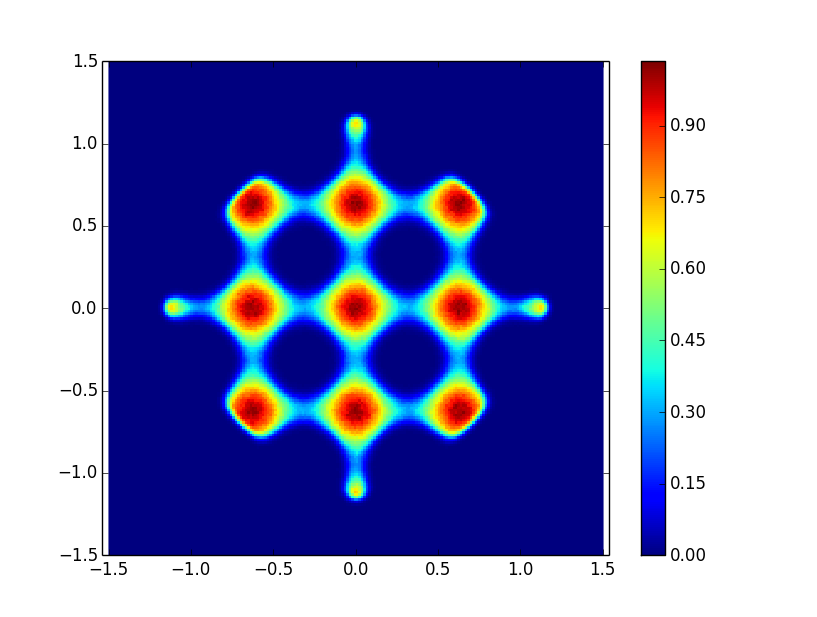}  
  \caption{\label{fi:lincos} Study of the cosine constraint for $n=300$ with
    $c=0.2$ (left) and $c=0.5$ (right). A phase separation appears as the particles are
    constrained to stay around the local maxima of the cosines.}
\end{figure} 

In order to illustrate the results of Section~\ref{sec:quadstats}, we consider
a quadratic constraint $\xi:(\dR^d)^n\to\dR$ of the form
\begin{equation}
  \label{eq:simuquadcond}
\forall\,x\in(\dR^d)^n,\quad \xi(x)=  \frac{1}{n^2}\sum_{i,j=1}^n \psi(x_i, x_j),
\end{equation}
with, for $x,y\in\dR^2$,
\begin{equation}
  \label{eq:simuquadphi}
\psi(x,y)=\phi(x-y),
\quad \mbox{and}\quad
\phi(x) = c - |x|.
\end{equation}
A motivation for this choice is to modify the \emph{rigidity} of the gas by
constraining the particles to be closer or farther apart one from another in
average. In order to make this rigidity anisotropic, we also
consider~\eqref{eq:simuquadphi} with
\begin{equation}
  \label{eq:quadanisotrope}
  \forall\,x\in\dR^2,
  \quad
  \phi(x) = c - |x_1|.
\end{equation}
The choice~\eqref{eq:quadanisotrope} modifies the rigidity only in one
direction. For illustration we take $V(x)=|x|^4/4$, $n=50$, $\Dt = 0.5$,
$T=10^6$ (we take a lower number of particles because the constraint makes the
dynamics quite stiff). We set $c=1$ for~\eqref{eq:simuquadphi} and $c=0.5$
for~\eqref{eq:quadanisotrope}, which forces the particles to move away from
each other. These choices for~$\psi$ satisfy the conditions of
Proposition~\ref{prop:suffquad}, and the application~$Q$ defined
in~\eqref{eq:Qdef} can be proved to be convex, so Assumption~\ref{as:psi} is
satisfied and Theorem~\ref{th:quadcond} applies. The distribution obtained for
the constraint~\eqref{eq:simuquadphi}, presented in Figure~\ref{fi:quadstat}
(left), shows that the more likely way for the particles to be repelled by the
constraint induced by~$\psi$ is to move away from the center and concentrate
on the edge, compared to Figure~\ref{fi:quarticlin} (left). For the
constraint~\eqref{eq:quadanisotrope}, we clearly observe in
Figure~\ref{fi:quadstat} (right) the effect of anisotropy.

\begin{figure}[!ht]
  \centering
  \includegraphics[width=.49\textwidth]{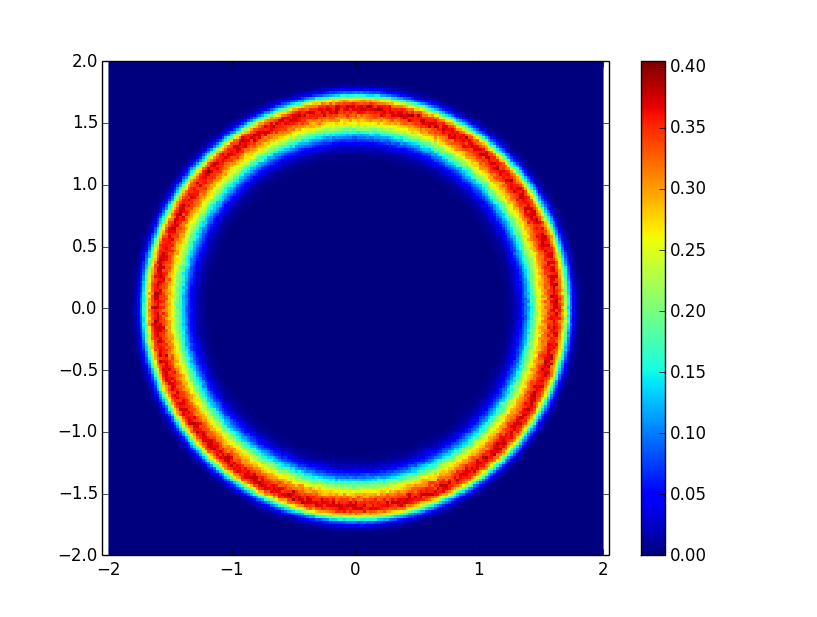}
  \includegraphics[width=.49\textwidth]{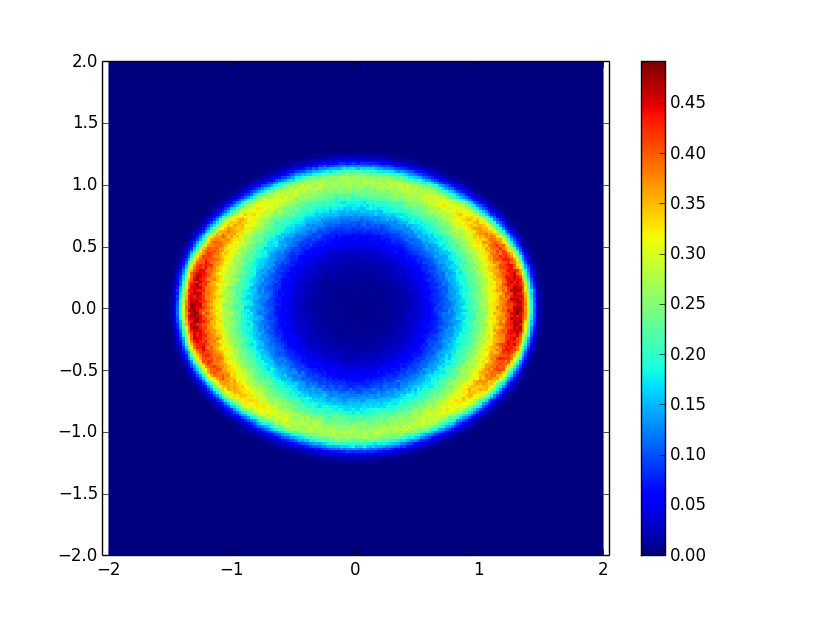}
  \caption{\label{fi:quadstat} Study of the quartic confinement for $n=50$
    with the quadratic statistics
    constraint~\eqref{eq:simuquadcond}-\eqref{eq:simuquadphi} where $c=1$
    (left), and with the constraint~\eqref{eq:quadanisotrope} with $c=0.5$
    (right). This has to be compared to the unconstrained distribution in
    Figure~\ref{fi:quarticlin} (left).}
\end{figure} 

\subsubsection{A one dimensional example}

We consider the Gaussian Unitary Ensemble (GUE), which is a degenerate
two-dimensional Coulomb gas for which the particles are confined on the real
axis. It corresponds in a sense to \eqref{eq:Pn} with $d=1$, $V(x)=|x|^2$ but
$g(x)=-\log|x|$, and $\beta_n=n^2$. It is known that the equilibrium measure
is then the Wigner semi--circle law, and we refer for instance to
\cite{arous1997large} for a large deviations study. We can apply
Theorem~\ref{th:Hermite} for the linear constraint~\eqref{eq:philinsimu}. In
this case, the Wigner semi--circle law is indeed translated by a factor~$c$
(figure not shown here).

Next, in order to illustrate a case which is not covered by our analysis, we
want to sample the spectrum of those matrices whose determinant is equal to~$\pm 1$. In our
context, this corresponds to the configurations~$x\in(\dR^d)^n$ with
$\prod_{i=1}^n |x_i| =1$. By taking the logarithm, this constraint is actually
of the form~\eqref{eq:constaverage} (by Remark~\ref{rem:parametrization} the
conditioned probability measure~\eqref{eq:constxi} does not depend on the parametrization) with
\begin{equation}
  \label{eq:xilog}
  \forall\, x\in(\dR^d)^n, \quad
  \xi(x) =\frac{1}{n} \sum_{i=1}^n \log|x_i|-c.
\end{equation}
We plot in Figure~\ref{fi:wignerlog} the distribution for $n=300$, $T= 10^5$
and $\Dt = 0.05$ for the unconstrained log-gas, and for the
constraint~\eqref{eq:xilog} with $c=-0.5$ and $c=0$ for~$\Dt=0.01$
(starting with particles equally spaced over the
interval~$[-1,1]$). We observe what looks like a
symmetrized Marchenko--Pastur distribution. Actually
Remark~\ref{rm:solutionlinear} suggests that the effective potential of the
constrained distribution is $|\cdot|^2- \alpha \log|\cdot|$ for some
$\alpha> 0$, which is not that far from the Laguerre potential 
$|\cdot|-\alpha\log|\cdot|$.

\begin{figure}[!ht]
  \centering
  \includegraphics[width=1.\textwidth,trim = 0cm 5cm 0cm 6cm, clip]{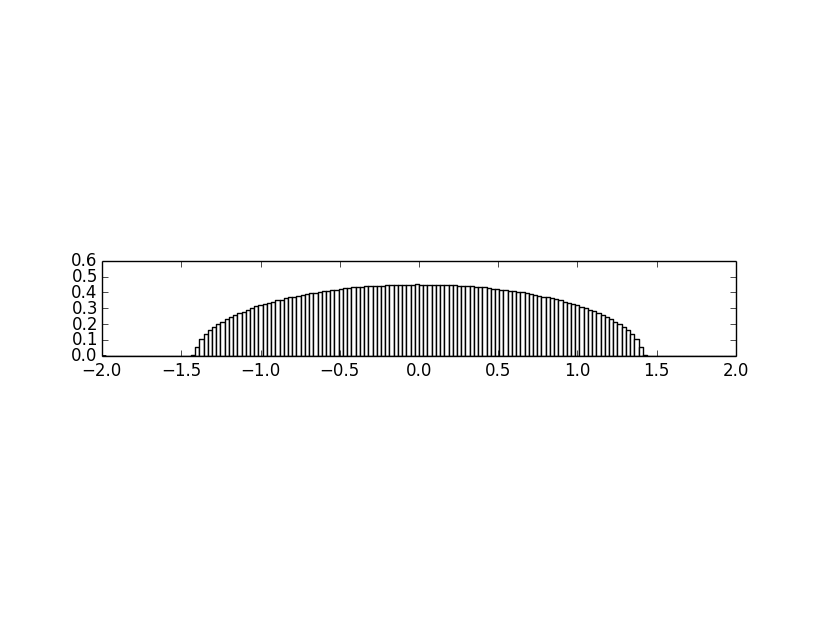}
  \includegraphics[width=1.\textwidth,trim = 0cm 5cm 0cm 6cm, clip]{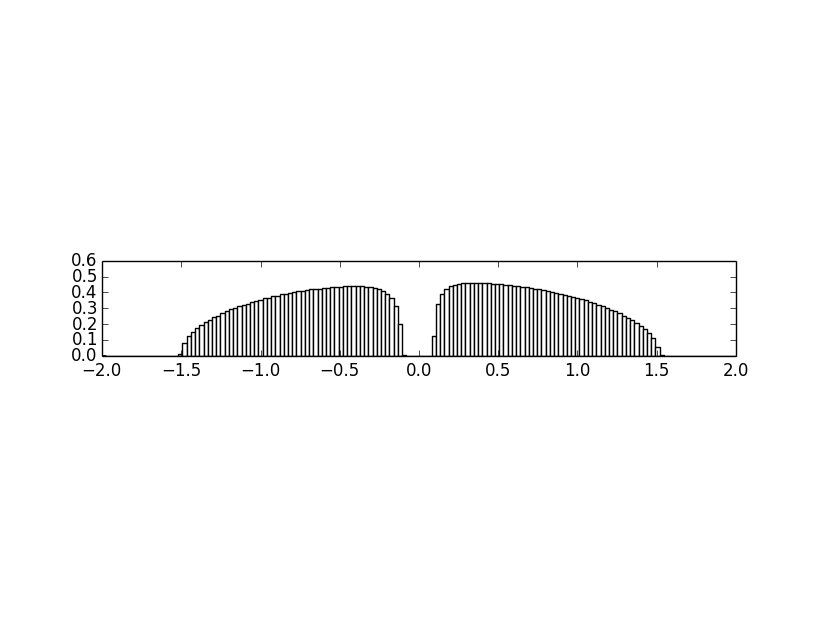}
  \includegraphics[width=1.\textwidth,trim = 0cm 5cm 0cm 6cm, clip]{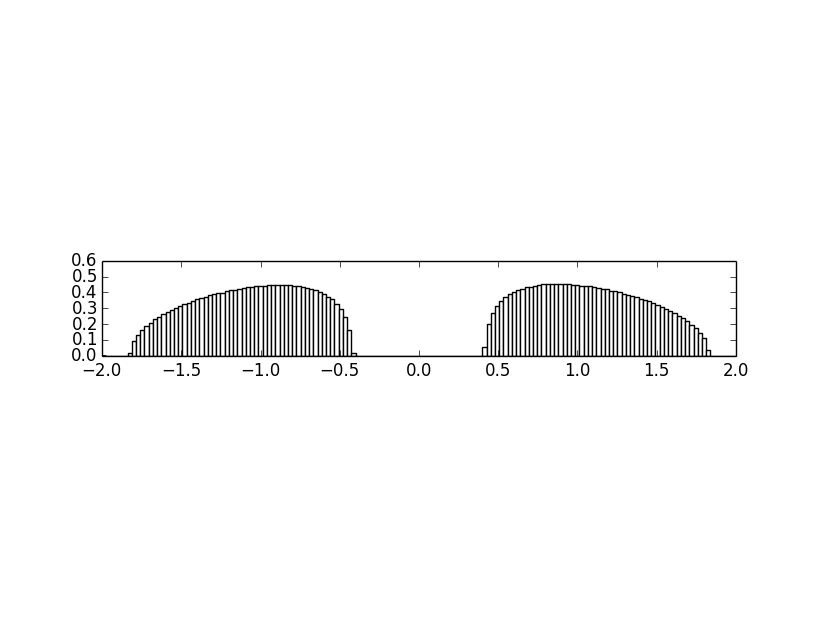}  
  \caption{\label{fi:wignerlog} Study of the one dimensional log-gas for
    $n=300$ without constraint (top) and with the
    constraint~\eqref{eq:philinsimu} for $c=-0.5$ (middle) and $c=0$ (bottom).
    This corresponds to a deformation of the semi--circle distribution.}
\end{figure}

\section*{Acknowledgements}

The authors warmfully thank the anonymous referee for his careful re-reading
of the manuscript and his insightful comments.
The PhD of Grégoire Ferré is supported by the Labex Bézout ANR-10-LABX-58-01.
The work of Gabriel Stoltz was funded in part by the Agence Nationale de la
Recherche, under grant ANR-14-CE23-0012 (COSMOS). Gabriel Stoltz is supported
by the European Research Council under the European Union’s Seventh Framework
Programme (FP/2007-2013)/ERC Grant Agreement number 614492, and also benefited
from the scientific environment of the Laboratoire International Associé
between the Centre National de la Recherche Scientifique and the University of
Illinois at Urbana-Champaign. 

\appendix
\section{Proof of Theorem \ref{th:Hermite}}
\label{sec:proofquad}

This section is devoted to the proof of Theorem~\ref{th:Hermite}. First, the
following lemma is some sort of quantitative Wasserstein version of
\cite[Lemma C.1]{MR2892961}.

\begin{lemma}[Translation]\label{le:w}
  Let $d\geq1$, $p\geq1$, $\mu_1,\mu_2\in\mathcal{P}_p(\mathbb{R}^d)$ and
  $\varphi:\mathbb{R}^d\to\mathbb{R}$ be a measurable function. Then for all
  $m_1,m_2\in\mathbb{R}^d$,
  \[
    \DWP^p(\mu_1*\delta_{m_1},\mu_2*\delta_{m_2})
    \leq 2^{p-1}|m_1-m_2|^p+2^{p-1}\mathrm{d}^p_{\mathrm{W}_p}(\mu_1,\mu_2).
  \]
  Moreover, if $m_i= a \left(\int\varphi\,\mathrm{d}\mu_i\right)$ for $i=1,2$
   and $a\in\mathbb{R}^d$, then
  \[
    \DWP(\mu_1*\delta_{m_1},\mu_2*\delta_{m_2})
    \leq 2^{\frac{p-1}{p}}(1+|a|^p\|\varphi\|^p_{\mathrm{Lip}})^{\frac{1}{p}}
    \DWP(\mu_1,\mu_2).
  \]
\end{lemma}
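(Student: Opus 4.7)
\textbf{Plan of proof for Lemma \ref{le:w}.} The argument is a direct manipulation of couplings, using only the primal definition of $\DWP$ and elementary convexity inequalities.

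For the first inequality, I would fix an optimal coupling $\pi\in\Pi(\mu_1,\mu_2)$ realizing $\DWP(\mu_1,\mu_2)$, and push it forward through the map $(x,y)\mapsto(x+m_1,y+m_2)$. The resulting measure is a coupling of $\mu_1*\delta_{m_1}$ and $\mu_2*\delta_{m_2}$, so by the primal definition
\[
\DWP^p(\mu_1*\delta_{m_1},\mu_2*\delta_{m_2}) \leq \iint_{\dR^d\times\dR^d} |(x-y)+(m_1-m_2)|^p\,\pi(\mathrm{d}x,\mathrm{d}y).
\]
Applying the elementary convexity bound $|u+v|^p\leq 2^{p-1}(|u|^p+|v|^p)$ and using that $\pi$ is a probability measure of total mass one yields the claimed estimate.

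For the second inequality, I substitute the specific values $m_i=a\int\varphi\,\mathrm{d}\mu_i$ into the first inequality and estimate $|m_1-m_2|^p$ in terms of $\DWP(\mu_1,\mu_2)$. Writing, for the same optimal coupling $\pi$,
\[
m_1-m_2 = a\iint_{\dR^d\times\dR^d}\big(\varphi(x)-\varphi(y)\big)\,\pi(\mathrm{d}x,\mathrm{d}y),
\]
the Lipschitz bound on $\varphi$ and Jensen's inequality (using that $\pi$ is a probability measure) give
\[
|m_1-m_2|^p \leq |a|^p\|\varphi\|_{\mathrm{Lip}}^p \left(\iint|x-y|\,\pi(\mathrm{d}x,\mathrm{d}y)\right)^{\!p} \leq |a|^p\|\varphi\|_{\mathrm{Lip}}^p\,\DWP^p(\mu_1,\mu_2).
\]
Injecting this into the first inequality and factoring $2^{p-1}\DWP^p(\mu_1,\mu_2)$ yields the bound, and a $p$-th root produces the stated constant $2^{(p-1)/p}(1+|a|^p\|\varphi\|_{\mathrm{Lip}}^p)^{1/p}$.

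There is no genuine obstacle here: the only point deserving care is the application of Jensen's inequality in the last display (it is used to move the $p$-th power inside the integral against the probability measure $\pi$), and checking that the pushforward of an optimal coupling by a translation is still a valid coupling of the translated marginals. Both are straightforward, so the proof is purely computational.
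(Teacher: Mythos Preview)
Your proof is correct and follows essentially the same approach as the paper: both establish the first inequality via a translated coupling and the convexity bound $|u+v|^p\leq 2^{p-1}(|u|^p+|v|^p)$, and both bound $|m_1-m_2|$ by $|a|\,\|\varphi\|_{\mathrm{Lip}}\,\DWP(\mu_1,\mu_2)$. The only cosmetic difference is that the paper obtains this last bound via the Kantorovich--Rubinstein duality for $\DW$ together with the monotonicity $\DW\leq\DWP$, whereas you work directly with the optimal $W_p$-coupling and Jensen's inequality; the two routes are equivalent.
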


Note that the right hand side is infinite if~$\varphi$ is not Lipschitz.

\begin{proof}
  We have, by using the infimum formulation~\eqref{eq:inf_formulation_Wasserstein} of the distance
  $\mathrm{d}_{\mathrm{W}_p}$, 
  \[
    \DWP^p(\mu_1*\delta_{m_1},\mu_2*\delta_{m_2})
    \leq 2^{p-1}|m_1-m_2|^p+2^{p-1}\DWP^p(\mu_1,\mu_2),
  \]
  where we used the convexity inequality $|u+v|^p\leq 2^{p-1}(|u|^p+|v|^p)$
  valid for all $u,v\in\mathbb{R}^d$. Then,
  since $p\mapsto \DWP$ is monotonic for $p\geq 1$, it holds
  \[
  |m_1-m_2|
  =\left|a\left(\int_{\dR^d}\varphi\,\mathrm{d}(\mu_1-\mu_2)\right)\right|
  \leq|a|\|\varphi\|_{\mathrm{Lip}}\DW(\mu_1,\mu_2)
  \leq|a|\|\varphi\|_{\mathrm{Lip}}\DWP(\mu_1,\mu_2),
  \]
  which is the claimed estimate.
\end{proof}

The following lemma is a $d$-dimensional version of the factorization lemma
in~\cite{chafai-lehec}. It expresses a non obvious independence between the
center of mass and the shape of the cloud of particles distributed according
to~$P_n$. As noticed in~\cite{chafai-lehec}, it reminds the structure of
certain continuous spins systems such as in~\cite{MR2028218,MR1847094}.


\begin{lemma}[Factorization]\label{le:factor}
  Suppose that the assumptions of Theorem~\ref{th:Hermite} are satisfied, and
  define $u =(v,\hdots,v)\in(\dR^d)^n$. Let~$\pi$ and~$\pi^\perp$ be the
  orthogonal projections in~$(\mathbb{R}^d)^n$ on the linear subspaces
  \[
    L=\dR u
    \quad\text{and}\quad
    L^\perp = \left\{ x \in (\dR^d)^n:x\cdot u = 0\right\}.
  \]
  Then, abridging $X_n$ into $X$, the following properties hold:
  \begin{itemize}
  \item for all $x\in(\mathbb{R}^d)^n$,
    denoting $s(x)=\frac{x_1+\cdots+x_n}{n}\in\mathbb{R}^d$, we have
    \begin{align*}
      \pi(x)
      & = (s(x)\cdot v)u =\big((s(x)\cdot v)v,\ldots,(s(x)\cdot v)v\big), \\
      \pi^\perp(x)
      &=x-\pi(x)= x - (s(x)\cdot v)u=\big(x_1-(s(x)\cdot v) v,\ldots,x_n-(s(x)\cdot v)v\big);
    \end{align*}
  \item $\pi(X)$ and $\pi^\perp(X)$ are independent random vectors;
  \item $\pi(X)$ is Gaussian with law
    $\mathcal{N}\left(0,\frac{n}{2\beta_n }\right)u$, so that~$s(X)\cdot v$ has law
    $\mathcal{N}\left(0,\frac{n}{2\beta_n }\right)$;
  \item $\pi^\perp(X)$ has law of density proportional to
    $x\in L^\perp\mapsto\mathrm{e}^{-\beta_n H_n(x)}$ with respect to the
    trace of the Lebesgue measure on the linear subspace~$L^\perp$
    of~$\mathbb{R}^{dn-1}$.
  \end{itemize}
\end{lemma}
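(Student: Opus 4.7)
The plan is to exhibit an explicit additive splitting of~$H_n$ along~$L$ and~$L^\perp$, from which the factorization of the density~$P_n$ and all four assertions will follow at once. The projection formulas are direct: the standard formula $\pi(x) = \frac{x\cdot u}{|u|^2}\, u$, together with $x\cdot u = \sum_{i=1}^n x_i\cdot v = n\, s(x)\cdot v$ and $|u|^2 = n$, yields $\pi(x) = (s(x)\cdot v)\, u$, and $\pi^\perp(x) = x - \pi(x)$ is obtained by subtraction.

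The heart of the proof is the decomposition of the Hamiltonian. Writing $x = \alpha u + z$ with $\alpha = s(x)\cdot v$ and $z = \pi^\perp(x)\in L^\perp$, two features of~$H_n$ combine: the quadratic confinement $V = |\cdot|^2$ and the translation invariance of~$g$. For the confinement term, Pythagoras in~$(\dR^d)^n$ gives $\sum_{i=1}^n |x_i|^2 = |x|^2 = |\alpha u|^2 + |z|^2 = n\alpha^2 + |z|^2$, so that $\frac{1}{n}\sum_{i=1}^n V(x_i) = \alpha^2 + \frac{1}{n}|z|^2$. For the interaction term, every component of~$\alpha u$ equals~$\alpha v$, hence $x_i - x_j = z_i - z_j$ for all~$i,j$, and so $g(x_i - x_j) = g(z_i - z_j)$ by translation invariance. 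Combining these observations yields $H_n(x) = \alpha^2 + H_n(z)$, where the rightmost~$H_n$ is simply~$H_n$ evaluated on the $L^\perp$-component viewed as an $n$-tuple in~$(\dR^d)^n$.

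The remainder is routine. Since $(\dR^d)^n = L \oplus L^\perp$ orthogonally, the Lebesgue measure factorizes as the product of the one-dimensional Lebesgue measure on~$L$ and the $(dn-1)$-dimensional one on~$L^\perp$, up to a fixed Jacobian $|u| = \sqrt{n}$ coming from the parametrization $\alpha\mapsto\alpha u$ of~$L$. The additive splitting of~$H_n$ then factorizes the density as $\mathrm{e}^{-\beta_n H_n(x)} = \mathrm{e}^{-\beta_n\alpha^2}\,\mathrm{e}^{-\beta_n H_n(z)}$, which immediately gives independence of~$\pi(X)$ and~$\pi^\perp(X)$, a centered Gaussian marginal for $\alpha = s(X)\cdot v$ (with variance read off from the normalization of $\mathrm{e}^{-\beta_n\alpha^2}$), and the claimed density proportional to $\mathrm{e}^{-\beta_n H_n(z)}$ for~$\pi^\perp(X)$ on~$L^\perp$. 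The only substantive step is the additive splitting of~$H_n$: it is what forces the factorization, and it depends crucially on~$V$ being quadratic (so the confinement splits by Pythagoras) and on the pair interaction being translation invariant (so that only the $L^\perp$-component enters the interaction).
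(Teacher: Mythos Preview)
Your proof is correct and follows essentially the same route as the paper: compute $\pi(x)=(s(x)\cdot v)u$ from $|u|^2=n$, obtain the additive splitting $H_n(x)=\alpha^2+H_n(\pi^\perp(x))$ via Pythagoras on the quadratic confinement and translation invariance of $g$ on the interaction, then factorize the density along $L\oplus L^\perp$. The only cosmetic difference is that the paper writes the splitting as $H_n(x)=\frac{1}{n}|\pi(x)|^2+H_n(\pi^\perp(x))$ and carries out the factorization explicitly in an orthonormal basis $u_1=u/\sqrt{n},u_2,\dots,u_{dn}$, whereas you parametrize $L$ by $\alpha\mapsto\alpha u$ and absorb the Jacobian $|u|=\sqrt n$ into the normalization.
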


\begin{proof}[Proof of Lemma~\ref{le:factor}]
  Since $|v|=1$, we have $|u| = \sqrt{n}$, so the orthonormal projection
  on~$L$ reads, for $x \in (\dR^d)^n$,
  \[
  \pi(x) = \frac{x\cdot u}{|u|^2}u = \left(\frac{1}{n}\sum_{i=1}^n x_i\cdot v\right) u
  = \big( s(x)\cdot v\big)u.
  \]
  The expression of $\pi^\perp$ follows easily.
  For all $x\in(\mathbb{R}^d)^n$, from $x=\pi(x)+\pi^\perp(x)$ we get
  \[
    |x|^2=|\pi(x)|^2+|\pi^\perp(x)|^2.
  \]
  On the other hand, for all $i,j\in\{1,\ldots,n\}$ it holds
  \begin{align*}
    x_i-x_j
    &=\pi(x)_i+\pi^\perp(x)_i-\pi(x)_j-\pi^\perp(x)_j\\
    &=s(x)+\pi^\perp(x)_i-s(x)-\pi^\perp(x)_j\\
    &=\pi^\perp(x)_i-\pi^\perp(x)_j.
  \end{align*}
  Since $V(x)=|x|^2$, it follows that, for all
  $x=(x_1,\ldots,x_n)\in(\mathbb{R}^d)^n$,
  \[
    H_n(x)=
    \frac{1}{n}|x|^2
    +\frac{1}{n^2}\sum_{i\neq j}g(x_i-x_j)
    =\frac{1}{n}|\pi(x)|^2+H_n\big(\pi^\perp(x)\big).
  \]
  Now, let $u_1,\ldots,u_{dn}$ be an orthogonal basis of
  $(\mathbb{R}^d)^n=\mathbb{R}^{dn}$ with $u_1 = u/\sqrt{n}\in L$. For
  all $x\in(\mathbb{R}^d)^n$ we write
  $x=\sum_{i=1}^{dn}t_i(x)u_i$. We have $\pi(x)=t_1(x)u_1 = (s(x)\cdot v)u$ and
  $\pi^\perp(x)=\sum_{i=2}^{dn}t_i(x)u_i$. Then we have,
  for all bounded measurable $f:L\to\mathbb{R}$ and $g:L^\perp\to\mathbb{R}$,
  \begin{align*}
    \mathbb{E}\Big[f\big(\pi(X)\big)g\big(\pi^\perp(X)\big) \Big]
    &=Z^{-1}\int_{(\mathbb{R}^d)^n}f\big(\pi(x)\big)g\big(\pi^\perp(x)\big)\,\mathrm{e}^{-\frac{
        \beta_n}{n}|\pi(x)|^2}\,\mathrm{e}^{-\beta_n H_n(\pi^\perp(x))}\mathrm{d}x_1
    \cdots\mathrm{d}x_n\\
    &=Z^{-1}\left(\int_{\mathbb{R}}f(t')\,\mathrm{e}^{-\frac{\beta_n}{n} |t'|^2}\mathrm{d}t'\right)
      \left(\int_{\mathbb{R}^{dn-1}}g(t'')\,
      \mathrm{e}^{-\beta_n H_n(t'')}\mathrm{d}t''\right),      
  \end{align*}
  where $t'=t_1u_1$, $\mathrm{d}t'=\mathrm{d}t_1$,
  $t''=\sum_{i=2}^{dn}t_iu_i$
  and $\mathrm{d}t''=\prod_{i=2}^{dn}\mathrm{d}t_i$. This concludes the proof of
  the last two points of the lemma.
\end{proof}

We can now turn to the proof of Theorem~\ref{th:Hermite}.

\begin{proof}[Proof of Theorem~\ref{th:Hermite}]
  Thanks to Lemma \ref{le:factor}, we have, denoting again by $u=(v,\ldots,v)\in(\mathbb{R}^d)^n$,
  \begin{align}
    \mathrm{Law}\left(X_n\, \left|\, \frac{1}{n}\sum_{i=1}^n\varphi(X_i)=0\right. \right)
    & =\mathrm{Law}\left(X_n\, \left|\, \frac{1}{n}\sum_{i=1}^n X_{n,i}\cdot v= c\right.
    \right)\nonumber\\
    & =\mathrm{Law}\left(X_n\, \Big|\, s( X_{n})\cdot v= c\right)\nonumber\\
    & =\mathrm{Law}\left(X_n\, \Big|\, \big(s( X_{n})\cdot v\big)u= cu\right)\nonumber\\
    &=\mathrm{Law}\left(X_n\, \Big|\, \pi(X_n)= cu\right)\nonumber\\
    &=\mathrm{Law}\left(cu+\pi^\perp(X_n)\right)\nonumber\\
    &=\mathrm{Law}\big(\widetilde X_n\big),\label{eq:le:xtilde}
  \end{align}
  where
  $\widetilde{X}_n = cu + \pi^\perp(X_n)= cu + X_n - \pi(X_n)$. We also have
  \[
    \widetilde{X}_n
    =\Big( \big(c-s(X_n)\cdot v \big) v + X_{n,1},\ldots,
    \big(c-s(X_n)\cdot v\big)v+X_{n,n} \Big)
  \quad\text{where}\quad
    s(X_n)=\frac{X_{n,1}+\cdots+X_{n,n}}{n}.
    \]
    In other words
    (recall that $\varphi(x) = x\cdot v - c$)
    \[
    \widetilde{X}_n
    =\left(  X_{n,1} - \frac{1}{n}\sum_{i=1}^n\varphi(X_{n,i})v,\ldots,
    X_{n,n}-\frac{1}{n}\sum_{i=1}^n\varphi(X_{n,i})v \right),
    \]
    so that
  \[
    \widetilde\mu_n=\frac{1}{n}\sum_{i=1}^n\delta_{\widetilde X_{n,i}}
    =\mu_n*\delta_{m_n}
    \quad\text{where}\quad
    \mu_n=\frac{1}{n}\sum_{i=1}^n\delta_{X_{n,i}}
    \quad\text{and}\quad
    m_n=v\int\varphi\,\mathrm{d}\mu_n.
   \]
   Thanks to the assumptions on~$V$ and~$g$ we know that the equilibrium
   measure~$\mus$ is the uniform distribution on a ball of radius~$1$. Now we
   note that $\|\varphi\|_{\mathrm{Lip}}\leq 1$ and
   $\int\varphi\,\mathrm{d}\mus = c$, so that by Lemma \ref{le:w} and denoting by
   $\mu^\varphi=\delta_{cv}*\mus$, for all $p\geq1$, 
  \begin{equation}\label{eq:le:wpwp}
    \DWP(\widetilde\mu_n,\mu^\varphi)
    \leq 2^{1-1/p} (1+|v|^p)^{1/p}\DWP(\mu_n,\mus) = 2 \DWP(\mu_n,\mus).
  \end{equation}
  
  On the other hand, the large deviations principle -- see~\cite[Proof of
  Theorem 1.1(4)]{MR3262506} and~\cite{dupuis} for the fact that the
  condition~\eqref{eq:betandiverge} ensures that~$(\beta_n)_n$ diverges fast
  enough -- gives, for any $\varepsilon>0$,
  \begin{equation}\label{eq:le:bl}
    \sum_n\mathbb{P}\big(\DBL(\mu_n,\mus)\geq\varepsilon\big)<\infty.
  \end{equation}
  Alternatively we could use the concentration of
  measure~\cite[Theorem~1.5]{MR3820329} and get the result for~$\DW$ as well.
  This summable convergence in probability towards a non-random limit, known
  as \emph{complete convergence}~\cite{MR1632875}, is equivalent, via
  Borel--Cantelli lemmas, to stating that almost surely,
  $\lim_{n\to\infty}\DBL(\mu_n,\mus)=0$, regardless of the way we defined the
  random variables~$X_n$ and thus the random measures~$\mu_n$ on the same
  probability space.

  In order to upgrade the convergence from~$\DBL$ to~$\DWP$ for all $p\geq1$
  we note that, from~\cite[Theorem 1.12]{MR3820329}, there exists
  $r_0>0$ 
  such that, for all $r\geq r_0$, 
  \begin{equation}\label{eq:le:max}
    \sum_n\mathbb{P}\left(\max_{1\leq k\leq n}|X_{n,k}|\geq r\right)
	  <\infty.
  \end{equation}
  Now for all $p\geq1$ and all $\mu,\nu\in\mathcal{P}(\mathbb{R}^d)$ supported
  in the ball of $\mathbb{R}^d$ of radius $r\geq1$, we have
  \[
    \DWP^p(\mu,\nu)
    \leq (2r)^{p-1}\DW(\mu,\nu)
    \leq r(2r)^{p-1}\DBL(\mu,\nu).  
  \]
  Also, by combining \eqref{eq:le:bl} and
  \eqref{eq:le:max}, we obtain that for all $p\geq1$ and all $\varepsilon>0$,
  \begin{equation}\label{eq:le:wp}
    \sum_n\mathbb{P}\big(\DWP(\mu_n,\mus)\geq\varepsilon\big)<\infty.
  \end{equation}
  By the Borel--Cantelli lemma, for all $p\geq1$, almost surely,
  $\lim_{n\to\infty}\DWP(\mu_n,\mus)=0$, regardless of the way we define the
  random variables~$X_n$ on the same probability space. Finally, since
  $p\mapsto\DWP$ is monotonic in~$p$, we can make the almost sure event valid
  for all~$p$ by taking the intersection of all the almost sure events
  obtained for integer values of~$p$.

  By combining \eqref{eq:le:wp} with \eqref{eq:le:wpwp}, we obtain that for
  all $p\geq1$ and $\varepsilon>0$,
  \begin{equation}\label{eq:le:wp:tilde}
    \sum_n\mathbb{P}\big(\DWP(\widetilde\mu_n,\mu^\varphi)
    \geq\varepsilon\big)<\infty.
  \end{equation}
  Now if~$Y_n$ is a random vector of $(\mathbb{R}^d)^n$ such that
  $\mathrm{Law}(Y_n)=\mathrm{Law}(X_n\mid
  \varphi(X_{n,1})+\cdots+\varphi(X_{n,n})=0)$ then, denoting by
  $\mu_n^Y=\frac{1}{n}\sum_{i=1}^n\delta_{Y_{n,i}}$,
  using~\eqref{eq:le:xtilde} and the fact that $\mu^{\varphi}$ is
  deterministic, we get
  \[
    \DWP(\mu_n^Y,\mu^{\varphi})\overset{\mathrm{d}}
    {=}\DWP(\widetilde\mu_n,\mu^{\varphi}).
  \]
  Therefore, from \eqref{eq:le:wp:tilde}
  we get, for all $p\geq1$ and all $\varepsilon>0$,
  \[
    \sum_n\mathbb{P}\big(\DWP(\mu_n^Y,\mu^\varphi)\geq\varepsilon\big)<\infty.
  \] 
  By the Borel--Cantelli lemma, for all $p\geq1$, almost surely,
  $\lim_{n\to\infty}\DWP(\mu_n^Y,\mu^\varphi)=0$, regardless of the way we
  define the random variables~$Y_n$ on the same probability space. Finally,
  since~$\DWP$ is monotonic in~$p$, we can make the almost sure event valid
  for all~$p$ by taking the intersection of all the almost sure events
  obtained for integer values of~$p$.
\end{proof}

Note that the above proof relies crucially, via Lemma \ref{le:factor}, on the
quadratic nature of~$V$. However the Coulomb nature of~$g$ is less crucial and
the result should remain essentially valid provided that the convergence to
the equilibrium measure holds, for instance at the level of generality of the
assumptions of the large deviations principle in~\cite{MR3262506}.


\section{Proofs of Section~\ref{sec:Gibbs}}
\label{sec:proofLDP}

We start with the proof of the abstract Gibbs conditioning principle.

\begin{proof}[Proof of Theorem~\ref{th:Gibbs}]
  Since $I$ is a good rate function, it is lower semicontinuous with compact level sets.	
  The set~$\mathscr{I}_{B}$ defined in~\eqref{eq:Imin} is not empty because
  the infimum is finite by~\eqref{eq:Icontinuity},~$B$ is closed and~$I$ has
  compact level sets, 
  so the infimum is attained at least for one measure.
  Moreover,~$\mathscr{I}_B$ is closed by lower semicontinuity of~$I$. Now,
  since
  \[
  \frac{1}{\beta_n}\log \dP \left( Z_n \in A_{\varepsilon}\
  \Big| \ Z_n\in B\right) =
  \frac{1}{\beta_n}\log \dP \left( Z_n \in A_{\varepsilon}\cap B\right)
  -   \frac{1}{\beta_n}\log \dP \left( Z_n \in B\right), 
  \]
  the result follows from an upper bound on~$\dP( A_{\varepsilon}\cap B)$ and
  a lower bound on~$\dP(B)$. The upper bound of the large deviations principle
  implies that
  \begin{equation}
    \label{eq:upperinter}
    \limsup_{n\to +\infty}\frac{1}{\beta_n}\log \dP \left( Z_n \in A_{\varepsilon}\cap B\right)
    \leq - \inf_{\overline{A_{\varepsilon}\cap B}} I.
  \end{equation}
  Assume first that~$\overline{A_{\varepsilon}\cap B}\neq \emptyset$. Since
  $A_{\varepsilon}=\{ z\in \mathcal{Z},\ \mathrm{d}(z, \mathscr{I}_{B})>
  \varepsilon \}$, the lower semi-continuity of~$I$ shows that
  (see~\cite[Section~2.5]{MR3262506}) there exists $c_{\varepsilon}>0$ for
  which
  \[
    \inf_{\overline{A_{\varepsilon}\cap B}} I \geq c_{\varepsilon} + \inf_B I,
  \]
  so that
  \begin{equation}
    \label{eq:upperb}
    \limsup_{n\to +\infty}\frac{1}{\beta_n}\log \dP \left( Z_n \in A_{\varepsilon}\cap B\right)
    \leq - \inf_B I - c_{\varepsilon}.
  \end{equation}
  If $\overline{A_{\varepsilon}\cap B}= \emptyset$, the infimum in the right
  hand side of~\eqref{eq:upperinter} is equal to~$+\infty$ so
  that~\eqref{eq:upperb} still holds. The lower bound for the set~$B$ reads
  \[
  \liminf_{n\to +\infty} \frac{1}{\beta_n}\log \dP \left( Z_n \in B\right)
  \geq - \inf_{\mathring{B}} I.
  \]
  Since $B$ satisfies~\eqref{eq:Icontinuity}, it holds
  \[
  \limsup_{n\to +\infty} - \frac{1}{\beta_n}\log \dP \left( Z_n \in B\right)
  \leq \inf_{B} I,
  \]
  which, together with~\eqref{eq:upperb}, leads to~\eqref{eq:GibbsI}.

  Finally, if $Z_n'\sim\mathrm{Law}(Z_n\mid Z_n\in B)$ and if we define
  the~$Z_n'$'s on the same probability space then, for all $\varepsilon>0$,
  thanks to \eqref{eq:betandiverge}, by the Borel--Cantelli lemma,
  $\sum_n\mathbb{P}(Z_n'\in A_\varepsilon)<\infty$ and thus, almost surely,
  $Z_n'\not\in A_\varepsilon$ for large enough~$n$. Since the
  set~$A_\varepsilon$ depends on~$\varepsilon>0$, by taking $\varepsilon\to 0$
  with $\varepsilon\in\dQ$, we obtain that almost surely,
  $\lim_{n\to\infty}\mathrm{d}(Z_n',\mathscr{I}_B)=0$.
\end{proof}

We next recall elements of proof for the properties of the rate function~$\cE$.

\begin{proof}[Proof of Proposition~\ref{prop:cE}]
  Consider a probability measure $\mu\in D_{\cE}$, so
  \[
  \int_{\dR^d}V(x)\mu(\mathrm{d}x) + \iint_{\dR^d\times \dR^d} g(x-y)\mu(\mathrm{d}x)
  \mu(\mathrm{d}y)<+\infty.
  \]
  Since $V$ satisfies Assumption~\ref{as:Vq} and therefore beats~$g$ at
  infinity (in particular when~$d=2$), we have 
  \[
  \int_{\dR^d} |x|^p\mu(\mathrm{d}x) < +\infty,
  \]
  for $1< p<q$. Thus $D_{\cE}\subset \mathcal{P}_p(\dR^d)$.
  
  The strict convexity of $\cE$ is due to a Bochner-type positivity of the
  interaction kernel. See for instance \cite[Chapter~I, Lemma~1.8]{MR1485778},
  \cite{MR1606719}, or \cite[Section~3]{MR3262506} for $d=2$ and
  \cite[Theorems~1.15 and~1.16]{MR0350027} or \cite[Lemma~3.1]{MR3262506} for
  $d\geq3$. The uniqueness of the minimizer follows from the strict convexity.
  The compactness of its support relies on the behaviour of $V$ at infinity,
  see for instance \cite[Chapter~I, Theorem~1.3]{MR1485778} for $d=2$
  and \cite[Theorem~1.2]{MR3262506} for $d\geq3$.
  Finally, since the minimizer of~$\cE$ over $\mathcal{P}(\dR^d)$ has compact
  support, the three problems in~\eqref{eq:minimizers} clearly coincide.
  %
\end{proof}

We finally present the proof of Corollary~\ref{co:Gibbs}, which is a
consequence of Theorem~\ref{th:Gibbs} and the Borel--Cantelli lemma.

\begin{proof}[Proof of Corollary~\ref{co:Gibbs}]
  Under~$P_n$, the empirical measure~$\mu_n$ associated to~$X_n$ satisfies a
  LDP in the $p$-Wasserstein topology with good rate function~$\cE$. Since~$B$
  is assumed to be a closed continuity set for the $p$-Wasserstein topology,
  the set~$\mathscr{E}_B$ defined in~\eqref{eq:Emin} is closed and non-empty
  by Theorem~\ref{th:Gibbs}.

  For simplicity we denote by
  \[
  \mu_n^Y =\frac{1}{n} \sum_{i=1}^n \delta_{Y_{n,i}}
  \]
  the empirical measure associated to~$Y_n$, where
  $Y_n\sim\mathrm{Law}(X_n\,|\,\mu_n\in B)$. For any~$\varepsilon>0$, we
  define the set~$A_{\varepsilon}$ as in Theorem~\ref{th:Gibbs}. Then, there
  exists $c_{\varepsilon}>0$ such that
  \[
  \begin{aligned}
  \sum_n\dP\big( \DWP(\mu_n^Y,\mathscr{E}_B) > \varepsilon \big)
  & = \sum_n\dP\big( \DWP(\mu_n,\mathscr{E}_B) > \varepsilon\ \big| \ \mu_n\in B \big)
  \\ & = \sum_n\dP\big( \mu_n \in A_{\varepsilon}\ \big| \ \mu_n\in B \big)
  \\ & \leq C \sum_n\mathrm{e}^{-\beta_n c_{\varepsilon}} < +\infty,
  \end{aligned}
  \]
  for some~$C>0$. Since~$\beta_n\gg n$ thanks to \eqref{eq:betandiverge}, the
  Borell--Cantelli lemma implies that
  \[
    \lim_{n\to\infty}\DWP\big(\mu_n^Y,\mathscr{E}_B\big)=0,
  \]
  almost surely in any probability space, which concludes the proof.
\end{proof}

\section{Proof of Theorem~\ref{th:lincond}}
\label{sec:prooflincond}

The proof is decomposed into four steps. We first show that under
Assumption~\ref{as:intB}, the set~$B$ is an $I$-continuity set for the
electrostatic energy~$\cE$. We next show that any minimizer of~$\cE$ over~$B$
has a compact support, and hence the minimizer is actually unique. The last
two steps characterize the minimizer through~\eqref{eq:muphi}.

\paragraph{Step 1: $I$-continuity}

Let us first show that $B\subset\mathcal{P}_p(\dR^d)$ is closed for the
$p$-Wasserstein topology by showing that
$B^c=\{\mu\in \mathcal{P}_p(\dR^d)\,|\,\mu(\varphi)>0\}$ is open. Take
$\mu\in B^c$ and~$\nu$ such that $\DWP(\mu,\nu)\leq \varepsilon^{\frac{1}{p}}$
for some $\varepsilon>0$. By definition of the $p$-Wasserstein distance it
holds
\begin{equation}
  \label{eq:wassphi}
\sup_{\substack{f\in\mathrm{L}^1(\mu),\, g\in\mathrm{L}^1(\nu)\\
    f(x)\leq g(y)+|x-y|^p}}\left(\int_{\dR^d} f\,\mathrm{d}\mu
-\int_{\dR^d} g\,\mathrm{d}\nu\right)\leq \varepsilon.
\end{equation}
Since $\|\varphi\|_{\infty,p}<+\infty$, for any
$\mu,\nu\in\mathcal{P}_p(\dR^d)$ it holds $\varphi\in L^1(\nu)\cap L^1(\mu)$.
Moreover, $\|\varphi\|_{\mathrm{Lip}}<+\infty$ and~$\varphi$ cannot be a
constant function because this would contradict the existence of~$\mu_\pm$ in
Assumption~\ref{as:intB}, so $\|\varphi\|_{\mathrm{Lip}}>0$. As a result, for
$|x-y|\geq 1$ we have
\[
|\varphi(x) - \varphi(y)|\leq \|\varphi\|_{\mathrm{Lip}} |x-y|
\leq \|\varphi\|_{\mathrm{Lip}} |x-y|^p. 
\]
Therefore, $\varphi/\|\varphi\|_{\mathrm{Lip}}$ satisfies the inf-convolution
condition in~\eqref{eq:wassphi} and we may pick $f=g=\varphi/\|\varphi\|_{\mathrm{Lip}}$ so that
\[
\int_{\dR^d} \frac{\varphi}{\|\varphi\|_{\mathrm{Lip}}} \,\mathrm{d}\mu
- \int_{\dR^d} \frac{\varphi}{\|\varphi\|_{\mathrm{Lip}}} \,\mathrm{d}\nu
\leq \varepsilon,
\]
which becomes
\[
\nu(\varphi)\geq \mu(\varphi) -\varepsilon \|\varphi\|_{\mathrm{Lip}} >0
\quad \mathrm{for}\quad \varepsilon < \frac{\mu(\varphi)}{\|\varphi\|_{\mathrm{Lip}}}. 
\]
As a result, $B^c$ is open and~$B$ is closed for the $p$-Wasserstein topology.

We now prove that~$B$ is an $I$-continuity set, namely
that~\eqref{eq:Icontlin} holds. By the same reasoning as above, the existence
of $\mu_-\in D_{\cE}$ such that $\mu_-(\varphi)<0$ ensures that
$\mu_-\in \mathring{B}$ (recall that $\mu_- \in \mathcal{P}_p(\dR^d)$ by Proposition~\ref{prop:cE}) so
\[
\inf_{\mathring{B}}\cE < +\infty.
\]
Since~$B$ is closed and~$\cE$ has compact level sets, 
there exists~$\bar{\mu}$ such that
\[
\cE(\bar{\mu}) = \inf_B\cE.
\]
If $\bar{\mu}(\varphi)<0$, it holds $\bar{\mu}\in \mathring{B}$ and the proof
is complete. Thus we may assume that $\bar{\mu}(\varphi)=0$ and, by
considering a minimizing sequence, for any~$\varepsilon>0$ we may find
$\mu_\varepsilon\in\mathring{B}$ such that
\begin{equation}
  \label{eq:approxeps}
\cE(\mu_\varepsilon) \leq \inf_{\mathring{B}}\cE +\varepsilon.
\end{equation}
For $t\in[0,1]$, we introduce
$\mu_t = t\mu_\varepsilon + (1 - t)\bar{\mu}\in D_{\cE}$. Since
$\bar{\mu}(\varphi)=0$ it holds $\mu_t\in \mathring{B}$ for any $t\in (0,1]$.
By convexity of~$\cE$ on its domain we have
\[
\cE(\mu_t)\leq t \cE(\mu_\varepsilon) + (1-t)\cE(\bar{\mu}).
\]
We now proceed by contradiction by assuming that
$\cE(\bar{\mu})= \inf_{\mathring{B}}\cE-\eta$ for some $\eta>0$.
Recalling~\eqref{eq:approxeps} we have, for some $\varepsilon>0$ and any $t\in(0,1]$,
\[
\cE(\mu_t) \leq t \left(\inf_{\mathring{B}}\cE + \varepsilon\right) + (1-t)
    \left(\inf_{\mathring{B}}\cE - \eta\right)
= \inf_{\mathring{B}}\cE + t\varepsilon - (1-t)\eta.
\]
Considering
\[
t < \frac{\eta}{\varepsilon + \eta},
\]
we obtain that $\mu_t\in\mathring{B}$ with
\[
\cE(\mu_t) < \inf_{\mathring{B}}\cE,
\]
which is a contradiction. Therefore,~\eqref{eq:Icontlin} holds true.

\paragraph{Step 2: the minimizer is unique and has compact support}
We now show that any minimizer~$\mu^\varphi$ has a compact support, before
turning to uniqueness. We detail the proof for $d\geq 3$
following~\cite{MR3262506} by highlighting the necessary modifications, and
leave the proof for $d=2$ to the reader (which is deduced
from~\cite[Chapter~I, Theorem~1.3]{MR1485778}). We introduce
\[
\zeta = \inf_{B}\cE
\]
and, for any compact~$K$,
\[
 \zeta_{K}= \inf_{B_K} \cE,\quad
\text{where}
\quad
B_K = \{\mu\in B\ | \ \mathrm{supp}(\mu)\subset K\}.
\]
By Assumption~\ref{as:intB},~$B_K$ is non empty for~$K$ large enough
(consider~$\mu_-(\,\cdot\, \ind_K)/\mu_-(K)$ for~$K$ large enough). By
Assumption~\ref{as:Vq}, for any constant~$C$ the set
\begin{equation}
  \label{eq:defsetK}
K =\left\{x\in\dR^d,\ V(x)  \leq C\right\}
\end{equation}
is compact. In all what follows, we assume that $V \geq 0$. Since~$V$ is lower
bounded and defined up to a constant, there is no loss of generality in this
assumption.

Let us show that $\zeta=\zeta_K$ for~$C$ large enough when~$K$ is defined
by~\eqref{eq:defsetK}. Since the infimums
on~$B$ and~$\mathring{B}$ coincide, we can consider a measure $\mu\in B$ such
that $\mu(\varphi)<0$ and $\cE(\mu)\leq \zeta +1$. If $\mu(K)=1$, the measure
has compact support and we are done, so we assume that $\mu(K) < 1$. The goal
of the following computations is to build a measure~$\mu_K\in B$ supported
in~$K$ such that $\cE(\mu_K)< \cE(\mu)$; this contradiction will show
that~$\zeta$ and~$\zeta_K$ are equal. Let us first show that $\mu(K)>0$
for~$C$ large enough. Indeed,
\[
\zeta + 1 \geq \cE (\mu)= \underbrace{\int_K V \,\mathrm{d}\mu}_{\geq 0}
+ \int_{K^c} V \,\mathrm{d}\mu + \underbrace{J(\mu)}_{\geq 0}
\geq C (1 - \mu(K)),
\]
which shows that $\mu(K)>0$ if $C > \zeta+1$. We may therefore define the restriction
\[
\mu_K(\,\cdot\,) = \frac{ \mu(K\cap \cdot\,)}{\mu(K)}.
\]
Since $\mu(K)<1$, we define similarly $\mu_{K^c}$. The measure~$\mu$ then reads
\[
\mu = \mu(K)\mu_K + (1 - \mu(K))\mu_{K^c}.
\]
Moreover, we chose~$\mu$ such that $\mu(\varphi)<0$, so it holds $\mu_K\in B_K$
for~$C$ large enough. Using the positivity of~$V$ and~$J$ (since $d\geq 3$ and
so $g\geq 0$) and $\mu(K)<1$, we obtain that
\[
\begin{aligned}
  \cE(\mu) = &\ \mu(K)\int_{\dR^d} V\,\mathrm{d}\mu_K + (1-\mu(K))
  \underbrace{\int_{\dR^d} V\,\mathrm{d}\mu_{K^c}}_{\geq C} + \mu(K)^2J(\mu_K) 
  \\ & + 2\mu(K)(1 - \mu(K))\underbrace{J(\mu_K, \mu_{K^c})}_{\geq 0}
  + (1 - \mu(K))^2\underbrace{J( \mu_{K^c})}_{\geq 0} 
  \\  \geq &\ \mu(K)^2J(\mu_K) + \mu(K)^2\int_{\dR^d} V\,\mathrm{d}\mu_K + (1 - \mu(K))C
  \\ \geq &\ \mu(K)^2 \cE(\mu_K) + (1 - \mu(K))C.
  \end{aligned}
  \]
  Let us proceed by contradiction by assuming that $\cE(\mu_K)\geq \cE(\mu)$, which leads to
  \[
  \cE(\mu)\geq \mu(K)^2 \cE(\mu) + (1 - \mu(K))C.
  \]
  Since $\cE(\mu)\leq \zeta +1$ we obtain
  \[
  (\zeta+1)( 1 - \mu(K)^2)\geq  (1 - \mu(K))C.
  \]
  Simplifying by $1 - \mu(K)$ we have
  \[
  2( \zeta + 1) \geq C,
  \]
  which is absurd for $C > 2(\zeta+1)$. Since $\mu_K\in B$ for~$C$ large
  enough, this shows that~$\zeta$ and~$\zeta_K$ coincide and that any
  minimizer has compact support.

  In the above proof, the only modification with respect to previous works
  (see for instance~\cite{MR3262506}) is to check that the restricted
  measure~$\mu_K$ satisfies the constraint for~$C$ large enough. This is done
  by picking the measure~$\mu$ close to the minimum and such
  that~$\mu(\varphi)<0$. The same strategy can be used in the situation where
  $d=2$ by writing
  \[
  \cE(\mu) = \iint_{\dR^d\times \dR^d}
  \left(\frac{V(x) + V(y)}{2} - \log|x-y|\right)\mu(\mathrm{d}x)\mu(\mathrm{d}y),
  \]
  and adapting~\cite[Chapter~I, Theorem~1.3]{MR1485778} since $V(x)+V(y)$
  dominates~$\log|x-y|$ at infinity by Assumption~\ref{as:Vq}.

  The minimizer is unique due to the strict convexity of $\cE$, see Proposition
  \ref{prop:cE}, and the linearity of the constraint.
  
\paragraph{Step 3: Lagrange multiplier}
We now turn to a first step towards the expression of~$\mu^\varphi$ involving
a Lagrange multiplier~$\alpha$. We adapt the proof
of~\cite[Theorem~3.1]{barbu2012convexity} by introducing the following subset
of~$\dR^2$:
\begin{equation}\label{eq:setR}
  R = \big\{ \big(\cE(\mu) - \cE(\mu^\varphi) + a_0, \mu(\varphi) + a_1\big):
    a_0>0, \, a_1 >0,\, \mu\in D_{\cE}\big\}.
\end{equation}
Since~$\cE$ is convex on its domain~$D_{\cE}$ (which is convex),~$R$ is a non
empty convex subset of~$\dR^2$ that does not contain~$(0,0)$ (recall also that
$D_{\cE}\subset \mathcal{P}_p(\dR^d)$ so the constraint takes finite values).
Separating~$R$ from~$(0,0)$ with a hyperplane
(see~\cite[Corollary~1.41]{barbu2012convexity}), this ensures the existence of
$(\alpha_0,\alpha_1)\in\dR^2 \setminus \{(0,0)\}$ such that, for any
$\mu \in D_{\cE}$ and $a_0,a_1>0$ it holds
\[
\alpha_0  \big(\cE(\mu) - \cE(\mu^\varphi) + a_0\big) + \alpha_1\big(\mu(\varphi) + a_1\big) > 0.
\]
By taking $a_1\to-\mu^\varphi(\varphi)\geq 0$ and $\mu=\mu^\varphi$ in the above equation, we obtain
that $\alpha_0 \geq 0$. Then, choosing $\mu=\mu^\varphi$, $a_0 \to 0$ and
$a_1 > -\mu^\varphi(\varphi)\geq 0$ we find $\alpha_1\geq 0$. Taking $a_0,a_1\to 0$, we obtain
\begin{equation}
  \label{eq:varconst0}
    \forall \,\mu\in D_{\cE},\quad \alpha_0 \cE(\mu^\varphi)\leq\alpha_0 \cE(\mu)
    + \alpha_1\mu( \varphi).
\end{equation}
We now prove that $\alpha_0 >0$ by contradiction. If $\alpha_0 = 0$,~\eqref{eq:varconst0}
becomes
\[
\forall \,\mu\in D_{\cE},\quad 0 \leq \alpha_1\mu( \varphi).
\]
Since $\alpha_1\neq 0$ in this case, the above equation contradicts Assumption~\ref{as:intB}
by taking $\mu=\mu_-$,  so $\alpha_0 > 0$ and we may renormalize~\eqref{eq:varconst0} into
\begin{equation}
    \label{eq:varconst}
    \forall \,\mu\in D_{\cE},\quad  \cE(\mu^\varphi)\leq \cE(\mu)
    + \alpha\mu( \varphi),
\end{equation}
where we set $\alpha = \alpha_1/\alpha_0\geq 0$.

Finally, we show that either $\mu^\varphi = \mus$, in which case $\alpha=0$,
or $\mu^\varphi(\varphi)=0$ and $\alpha >0$. First, if $\mus\in B$, $\mus$
satisfies the constraint and we know from Proposition~\ref{prop:cE} that it
solves~\eqref{eq:muphi} with~$\alpha =0$. Otherwise, it holds
$\mus(\varphi)>0$. Since $\cE(\mus) < \cE(\mu^\varphi)$ because~$\mus$ is
the unique global minimizer of~$\cE$, we then obtain from~\eqref{eq:varconst} with
the choice $\mu=\mus$ that~$\alpha>0$. Choosing next $\mu = \mu^\varphi$
in~\eqref{eq:varconst} shows that $\mu^\varphi(\varphi)=0$, so the minimizer
actually saturates the constraint.
  
\paragraph{Step 4: potential equation}
In order to derive the equation for $\mu^\varphi$, we
follow~\cite[Section~4]{MR3262506} by introducing the modified potential and
electrostatic energy, for $\mu\in\mathcal{P}(\dR^d)$,
\[
  V_{\alpha} = V + \alpha \varphi,\quad
  \cE_{\alpha}(\mu) = \int_{\dR^d} V_{\alpha}(x)\mu(\mathrm{d}x) + J(\mu).
\]
Since the case when~$\alpha = 0$ corresponds to no-conditioning and we already
know that the equation is satisfied by the equilibrium measure in this case, we
restrict our attention to the situation in which~$\alpha>0$ and
$\mu^\varphi(\varphi)=0$. We define next, for any $\mu\in D_{\cE}$,
\[
  \forall \,t\in (0,1), \quad \psi(t)= \cE_{\alpha}\big( (1-t)\mu^\varphi + t \mu\big).
\]
Because of~\eqref{eq:varconst} and the convexity of~$\psi$, it holds
$\psi'(0)\geq 0$, so that
  \[
    \begin{aligned}
      0 & \leq\psi'(0)=
      \int_{\dR^d} V_{\alpha}\,\mathrm{d}(\mu - \mu^\varphi)+ 2J(\mu^{\varphi},\mu - \mu^\varphi) 
      \\ & \leq \int_{\dR^d} V_{\alpha}\,\mathrm{d}\mu + 2J(\mu^{\varphi},\mu) -
      \left(\int_{\dR^d} V_{\alpha}\,\mathrm{d} \mu^\varphi+ 2J(\mu^{\varphi})\right) 
      \\ &\leq \int_{\dR^d} (V_{\alpha} + 2 U_{\mu^\varphi})\,\mathrm{d}\mu - C_\varphi,
    \end{aligned}
  \]
  where we set $U_{\mu^\varphi} = \mu^\varphi * g$ and
  \[
    C_\varphi = \int_{\dR^d} V_{\alpha}\,\mathrm{d} \mu^\varphi+ 2J(\mu^{\varphi})
    = \int_{\dR^d} ( V + 2U_{\mu^\varphi})\,\mathrm{d}\mu^\varphi,
  \]
  since $\mu^\varphi(\varphi)=0$.  The above inequality may be rewritten as
  \[
    \forall\,\mu\in D_{\cE},\quad \int_{\dR^d} (V_\alpha + 2U_{\mu^\varphi} - C_\varphi)\,\mathrm{d}\mu
    \geq 0,
  \]
  which proves the second line of~\eqref{eq:muphi}. Recall indeed that, by definition,
  a property holds quasi-everywhere if and only if it holds almost surely for
  all probability measures with finite energy. Moreover, the measures~$\mu$
  and~$\mu^\varphi$ belonging to $D_{\cE}\subset\mathcal{P}_p(\mathbb{R}^d)$, they satisfy
  condition~\eqref{eq:log_int_condition} so that the quantity~$J(\mu,\mu^\varphi)$
  is well-defined following the remark below~\eqref{eq:def_J}.

  Let us now prove the first line in~\eqref{eq:muphi} by contradiction. Assume
  that there is $x\in\mathrm{supp}(\mu^\varphi)$ such that
  $V_\alpha(x) + 2U_{\mu^\varphi}(x) > C_\varphi$. Since~$\mu^\varphi$ has
  compact support,~$U_{\mu^\varphi}$ is lower
  semi-continuous~\cite[page~59]{MR0350027}. Since~$V$ is lower
  semi-continuous and~$\varphi$ is Lipschitz hence
  continuous,~$V_\alpha + 2 U_{\mu^\varphi}$ is lower semi-continuous. There
  exists therefore a neighborhood~$\mathcal{U}$ of~$x$ and $\varepsilon>0$
  such that
  \[
  \forall\,x\in\mathcal{U},\quad V_\alpha(x) + 2 U_{\mu^\varphi}(x) \geq C_\varphi + \varepsilon.
  \]
  Integrating with respect to~$\mu^\varphi$ and using $\mu^\varphi(\varphi)=0$ leads to
  \[
  \begin{aligned}
  C_{\varphi}=  \int_{\dR^d}( V_\alpha + 2U_{\mu^\varphi})\,\mathrm{d}\mu^\varphi
  & = \int_{\mathcal{U}}( V + 2U_{\mu^\varphi})\,\mathrm{d}\mu^\varphi
  + \int_{\dR^d\setminus\mathcal{U}}( V + 2 U_{\mu^\varphi})\,\mathrm{d}\mu^\varphi
  \\ & \geq (C_\varphi + \varepsilon)\mu^\varphi(\mathcal{U}) 
  + \int_{\dR^d\setminus\mathcal{U}}( V + 2 U_{\mu^\varphi})\,\mathrm{d}\mu^\varphi.
  \end{aligned}
  \]
  Since $V_\alpha + 2 U_{\mu^\varphi}\geq C_\varphi$ quasi-everywhere and $\mu^\varphi\in D_{\cE}$,
  the above inequality becomes
  \[
  C_\varphi  = \int_{\dR^d}(V_\alpha + 2 U_{\mu^\varphi})\,\mathrm{d}\mu^{\varphi}
  \geq C_\varphi + \varepsilon \mu^\varphi(\mathcal{U}).
  \]
  We reach a contradiction by noting that $\mu^\varphi(\mathcal{U}) >0$ since~$\mathcal{U}$
  is a neighborhood of $x\in\mathrm{supp}(\mu^\varphi)$ (using the definition of
  the support), which proves the first line of~\eqref{eq:muphi}.

\section{Proof of Theorem~\ref{th:quadcond}}
\label{sec:proofquadstats}

We outline the proof of Theorem~\ref{th:quadcond}, which follows the same
lines as in the linear case.
\begin{proof}
  We show below that the set~$B\subset \mathcal{P}_p(\dR^d)$ defined in~\eqref{eq:Bquad} is closed for the
  $p$-Wasserstein topology under Assumption~\ref{as:psi}. For this, we show
  that~$B^c$ is open by picking $\mu\in \mathcal{P}_p(\dR^d)$ such that
  $Q(\mu)>0$ and using again that, for~$\varepsilon>0$ and
  $\nu\in \mathcal{P}_p(\dR^d)$ such that
  $\DWP(\mu,\nu)\leq \varepsilon^{\frac{1}{p}}$, it holds,
  by~\eqref{eq:kantorovitch},
  \begin{equation}
    \sup_{\substack{f\in\mathrm{L}^1(\mu),\, g\in\mathrm{L}^1(\nu)\\
        f(x)\leq g(y)+|x-y|^p}}\left(\int_{\dR^d} f\,\mathrm{d}\mu
      -\int_{\dR^d} g\,\mathrm{d}\nu\right)\leq \varepsilon.
  \end{equation}
  First, by Assumption~\ref{as:psi}, we note that for
  any~$\mu,\nu\in\mathcal{P}_p(\dR^d)$ it
  holds~$\|U_\mu^\psi\|_{\infty,p}<+\infty$ and
  $\|U_\nu^\psi\|_{\infty,p}<+\infty$. Therefore,
  $U_\mu^\psi\in L^1(\mu)\cap L^1(\nu)$ and
  $U_\nu^\psi\in L^1(\mu)\cap L^1(\nu)$ for any probability measures~$\mu,\nu$
  with moments of order~$p$. Next, by~\eqref{eq:psicond}, it holds
  $\|U_\mu^\psi\|_{\mathrm{Lip}}\leq C_{\mathrm{Lip}}$ and
  $\|U_\nu^\psi\|_{\mathrm{Lip}}\leq C_{\mathrm{Lip}}$. We assume for now that
  these norms are non-zero, so we may first choose
  $f=g=U_\mu^\psi/\|U_\mu^\psi\|_{\mathrm{Lip}}$, which leads to
  \begin{equation}
    \label{eq:munu1}
    Q (\mu) -\iint_{\dR^d\times\dR^d}\psi(x,y)\nu( \mathrm{d}x)\mu(\mathrm{d}y)
    \leq \varepsilon\|U_\mu^\psi\|_{\mathrm{Lip}}.
  \end{equation}
  Symmetrically we take $f=g=-U_\nu^\psi/\|U_\nu^\psi\|_{\mathrm{Lip}}$, which
  leads to
  \begin{equation}
    \label{eq:munu2}
    \iint_{\dR^d\times\dR^d}\psi(y,x)\nu( \mathrm{d}x)\mu(\mathrm{d}y)
    - Q (\nu) \leq \varepsilon\|U_\nu^\psi\|_{\mathrm{Lip}}.
  \end{equation}
  By summing~\eqref{eq:munu1} and~\eqref{eq:munu2} and using the symmetry
  of~$\psi$, we obtain
  \begin{equation}
    \label{eq:ineqQQ}
    Q(\nu) \geq Q(\mu) - \varepsilon \big( \|U_\nu^\psi\|_{\mathrm{Lip}}
    + \|U_\mu^\psi\|_{\mathrm{Lip}}\big)\geq Q(\mu) - 2 C_{\mathrm{Lip}} \varepsilon. 
  \end{equation}
  This shows that $Q(\nu)>0$ for $\varepsilon< Q(\mu)/(2C_{\mathrm{Lip}})$. To
  finish the argument, we consider the cases where the Lipschitz norm of the
  potentials generated by~$\mu$ and~$\nu$ may be zero. Suppose first that
  $\|U_\mu^\psi\|_{\mathrm{Lip}}=0$. This implies the existence of
  $c_\mu\in\dR$ such that
  \begin{equation}
    \label{eq:crossterm}
    \forall\, x\in\dR^d,\quad \int_{\dR^d}\psi(x,y)\mu(\mathrm{d}y) = c_\mu.
  \end{equation}
  Integrating the above equation with respect to~$\mu$ shows that
  $c_\mu = Q(\mu)>0$. As a result, if~$\|U_\mu^\psi\|_{\mathrm{Lip}}=0$
  and~$\nu$ is such that~$\|U_\nu^\psi\|_{\mathrm{Lip}}>0$ we can
  consider~\eqref{eq:munu2}, which becomes (integrating~\eqref{eq:crossterm}
  with respect to~$\nu$)
  \[
    Q(\nu) \geq Q(\mu) - \varepsilon\|U_\nu^\psi\|_{\mathrm{Lip}}\geq Q(\mu) - \varepsilon
    C_{\mathrm{Lip}}.
  \]
  In this case, $Q(\nu)>0$ for $\varepsilon < Q(\mu)/C_{\mathrm{Lip}}$. Then,
  if~$\|U_\nu^\psi\|_{\mathrm{Lip}}=0$ it holds, for some~$c_\nu\in\dR$,
  \begin{equation}
    \label{eq:cnunul}
    \forall\, x\in\dR^d,\quad \int_{\dR^d}\psi(x,y)\nu(\mathrm{d}y) = c_\nu.
  \end{equation}
  Integrating with respect to~$\mu$ and using the symmetry of~$\psi$ we obtain
  that $c_\nu = c_\mu>0$. Integrating next~\eqref{eq:cnunul} with respect
  to~$\nu$ shows that~$Q(\nu)= c_\mu=Q(\mu)>0$. Finally, if
  $\|U_\mu^\psi\|_{\mathrm{Lip}}>0$ but
  $\|U_\nu^\psi\|_{\mathrm{Lip}}=0$,~\eqref{eq:cnunul} holds with
  $c_\nu = Q(\nu)$ so that~\eqref{eq:munu1} becomes
  \[
    Q(\nu) \geq Q(\mu) - \varepsilon C_{\mathrm{Lip}},
  \]
  and the same conclusion follows. As a result, in any case the measures~$\nu$
  such that $\DWP(\mu,\nu)\leq \varepsilon^{\frac{1}{p}}$ for
  $\varepsilon < Q(\mu)/(2C_{\mathrm{Lip}})$ belong to~$B^c$ so that~$B^c$ is
  open and~$B$ is closed in the $p$-Wasserstein topology.
  
  We next show that~$B$ is an $I$-continuity set. The existence
  of~$\mu_-\in D_{\cE} \subset \mathcal{P}_p(\dR^d)$ such that $Q(\mu_-)<0$ ensures that
  \[
    \inf_{\mathring{B}} \cE < + \infty.
  \]
  Since~$\cE$ has compact level sets and~$B$ is closed, there
  exists~$\bar{\mu}$ such that
  \[
    \cE(\bar{\mu}) = \inf_B\cE.
  \]
  If $Q(\bar{\mu})<0$, $I$-continuity is proven, so we may assume that
  $Q(\bar{\mu})=0$. Like in the linear case, we may
  take~$\mu_{\varepsilon}\in\mathring{B}$ such that
  \[
    \cE(\mu_\varepsilon)\leq \inf_{\mathring{B}} \cE + \varepsilon,
  \]
  and consider the convex combination
  $\mu_t = t \mu_\varepsilon + (1-t)\bar{\mu}$ for $t\in(0,1)$. The convexity
  of~$Q$ shows that, for any $t\in(0,1)$ it holds
  \[
    Q(\mu_t) \leq t Q(\mu_\varepsilon) + (1-t)Q(\bar{\mu}) <0,
  \]
  so that $\mu_t \in \mathring{B}$. Proceeding by contradiction by supposing
  that $\cE(\bar{\mu})< \inf_{\mathring{B}}\cE$, we obtain that for $t>0$
  small enough it holds $\mu_t \in \mathring{B}$ and
  \[
    \cE(\mu_t) < \inf_{\mathring{B}} \cE,
  \]
  which is a contradiction, proving that~$B$ is an $I$-continuity set.

  One can next follow Step~2 of the proof of Theorem~\ref{th:lincond} to show
  that the minimizer~$\mu^\psi$ is unique with compact support.
  
  At this stage, the remaining statements in Theorem~\ref{th:quadcond} can be
  proved as for Theorem~\ref{th:lincond}. In particular, we can introduce a
  set similar to~\eqref{eq:setR} by setting
  \[
    R = \big\{ \big(\cE(\mu) - \cE(\mu^\varphi) + a_0, Q(\mu) + a_1\big):
    a_0>0, \, a_1 >0,\, \mu\in D_{\cE}\big\}.
  \]
  The set~$R$ is convex by convexity of~$Q$, so the same convex separation
  theorem can be used, and we can show that there exists~$\alpha \geq 0$ such
  that
  \[
    \forall\, \mu\in D_{\cE}, \quad \cE(\mu^\psi) \leq \cE(\mu) +\alpha Q(\mu).
  \]
  In this procedure, we use the existence of~$\mu_\pm$ from
  Assumption~\ref{as:psi} in order to reproduce the qualification of
  constraint argument. This leads to computations where the interaction
  energy~$J$ is replaced by
  \[
    J_{\alpha}(\mu,\nu)=J(\mu,\nu) + \alpha \iint_{\dR^d\times\dR^d}
    \psi(x,y)\mu(\mathrm{d}x)\mu(\mathrm{d}y),
  \]
  from which~\eqref{eq:mupsi} follows by mimicking Step~4 of the proof of
  Theorem~\ref{th:lincond}.
\end{proof}

As a final comment, let us insist on the importance of the convexity of~$Q$
for the above proof to be valid.


\bibliographystyle{siamplain}


\providecommand{\bysame}{\leavevmode ---\ }
\providecommand{\og}{``}
\providecommand{\fg}{''}
\providecommand{\smfandname}{\&}
\providecommand{\smfedsname}{eds.}
\providecommand{\smfedname}{ed.}
\providecommand{\smfmastersthesisname}{Memoir}
\providecommand{\smfphdthesisname}{Thesis}

\end{document}